\newtheorem{theorem}{Theorem}[section]
\newtheorem{lemma}[theorem]{Lemma}
\newtheorem{proposition}[theorem]{Proposition}
\theoremstyle{definition}
\newtheorem{definition}[theorem]{Definition}
\newtheorem{example}[theorem]{Example}
\numberwithin{equation}{section}
\newcommand{\R}{\mathbb{R}}
\newcommand{\N}{\mathbb{N}}
\newcommand{\I}{\mathbb{I}}
\begin{document}

\title{A new smoothing method for nonlinear complementarity problems involving $ \mathcal{P}_{0}$-function}

\author{El hassene Osmani}\sameaddress{,2, *}\address{Laboratory of Fundamental and Numerical Mathematics, University Ferhat Abbas of Setif 1, Setif, Algeria.}
\author{ Mounir Haddou }\address{INSA Rennes, CNRS, IRMAR-UMR 6625, University of Rennes, Rennes, France.}
\author{Lina Abdallah}\address{Lebanese University, Tripoli, Lebanon.}
\subjclass{47H05; 90C33}
\date{\emph{Keywords.} Nonlinear complementarity problems, Newton's method, smoothing functions, $  \mathcal{P}_{0}$-matrix, interior-point methods.}
\author{Naceurdine Bensalem}\sameaddress{1}

\begin{abstract} 
In this paper, we propose a smoothing method to solve nonlinear complementarity problems involving $ \mathcal{P}_{0}$-functions.
We propose a nonparametric algorithm to solve the nonlinear corresponding system of equations and prove some global and local convergence results.\\
We also present several numerical experiments and applications that show the efficiency of our approach.\\
Our main contribution relies in the fact that the regularization parameter $ r $  is considered as a variable and we do not need any complicated strategy to update it.
\end{abstract}
\maketitle

\renewcommand{\thefootnote}{}
\footnotetext{ $^*$Corresponding author: el-hassene.osmani@insa-rennes.fr }

\section{Introduction}
The non-linear complementarity problem (NCP) consists in finding $x
\in \mathbb{R}^n$ satisfying
\begin{equation}\label{eq1}
x \geq 0 \quad F(x) \geq 0 \quad x^TF(x)=0
\end{equation}
where $F:\mathbb{R}^n \rightarrow \mathbb{R}^n$. When $F$ is linear,
problem (\ref{eq1}) reduces to a linear complementarity problem
(LCP).\\
Nonlinear complementarity problems arise in many practical
applications, for example, KKT systems of mathematical
programming problem, economic equilibria, and engineering
design problems, can be formulated as  NCP problems \cite{1, 3}.\\
Different concepts have been developed to study and solve this problems:
reformulation as a system of nonlinear equations or a minimization problem (see \cite{4, 10}). Recently, there have been strong interests in equation reformulation methods for solving the nonlinear complementarity problems. One of the most effective methods is to transform the NCP into semi-smooth equation (NCP functions) and solve using semi-smooth Newton methods. The most well-known NCP functions are the Fisher-Burmeister function \cite{11} and the min function \cite{12}. Another well-known class of algorithms correspends the smoothing methods. The main idea of smoothing approaches is to approximate or regularize the NCP to obtain smooth equations depending on some parameter \cite{13, 17}.\\
In this paper, we present, smoothing approximation scheme to solve
(1). We replace
\begin{equation}\label{202222}
0 \leq x \perp y \geq 0
\end{equation}
by a sequence of smoothed systems of the form 
\begin{equation} \label{773221111}
G_{r}(x,y):=r \psi^{-1} \left[ \psi\left(\frac{x}{r}\right)+\psi\left(\frac{y}{r}\right)\right]=0.
\end{equation}
All the functions and parameters involved in \eqref{773221111} will be explicit later. 
The novelty of our approach is that, we do not need any complicated strategy to update the regularization parameter $r$ since we will consider it as a new variable. To solve the smoothed equations system we will use standard Newton-like method. Without requiring strict complementarity assumption at the solution of equation \eqref{eq1}, we prove that  the proposed algorithm is well defined, globally and superlinearly convergent. At the end of the paper, we present  numerical results to prove the effectiveness of the algorithm. \\
This paper is organized as follows: some definitions are
introduced in section 2. We present our approximation and
formulation in section 3. In section 4, we discuss our approach and scheme to solve \eqref{eq1}. The convergence properties of the algorithm are given in section 5. The section 6 is devoted to the numerical results with a comparison of our method with other approaches.

 \section{Preliminaries and Problem Setting }
 Consider the nonlinear complementarity problem NCP, which is to find a solution of the system:
 \begin{equation} \label{1}
 x \geq 0,\quad F(x) \geq 0 \quad \text{and} \quad x^{T} F(x)=0 \quad \text{or} \quad  0\leq x ~\bot~ F(x)\geq 0
 \end{equation}
 where $  F$ : $ \R^{n}\longrightarrow \R^{n} $ is a continuous function satisfying some additional assumptions to be precised later.\\
 From \eqref{1}, we obtain the equivalent formulation for componentwise products 
 \begin{equation*}
 x\geq 0, ~F(x) \geq 0~~~~ ~ ~~\quad~ ~ \text{and}~~ ~ \quad ~~x_{i}F_{i}(x)=0, \quad \forall i=1,2,...n.
 \end{equation*}
 Or equivalently 
 \begin{equation*}
 x.F(x)=0, \quad~~ x\geq 0, ~F(x)\geq 0,
 \end{equation*}
 where $ "." $ stands for the Hadamard product. It provides an explanation for the term "complementarity", namely, for all $ i=1,2,...,n,~x_{i}$ and $ F_{i}(x) $ are complementary in the sense that if one of them is positive then the other term must be zero.\\
A particular and important class  of NCP is the LCP class defined below.
 	\begin{definition} (Linear complementarity problems)\\
 		When $ F $ is affine function:
 		\begin{equation*}
 		F(x)=Mx+q, \quad x\in \R^{n},~ q\in \R^{n},~ M \in \R^{n\times n}.
 		\end{equation*}
 		The corresponding  NCP is called a linear complementarity problem(denoted LCP). So an problem is to find  $ x \in \R^{n} $ such that 
 		\begin{equation*}
 		x \geq 0, \quad Mx+q \geq 0\quad \text{and}\quad x^{T}(Mx+q)=0.
 		\end{equation*}
 	\end{definition}
 To solve NCP, there are essentialy three different classes of methods: equation-based methods (smoothing), merit functions and projection-tupe methods.
 Our goal in this paper is to present new and very simple smoothing and approximation schemes to solve NCP and to produce efficient numerical methods. In our approach we do not need any complicated strategy to update the smoothing  parameter since we will consider it as a new variable.\\
 First, let us introduce usual assumptions on $ F $ and the ones that will be used in this paper. A well known and studied situation corresponds to monotone functions $ F $ and several methods and algorithms have been developed in this case.\\
 Almost all the solution methods consider at least the following important and standard condition on the mapping $ F $ (monotonicity): We recall that $ F $ is said to be monotone if $ F : \R^{n} \to \R^{n}$ satisfies for any $ (x, y) \in \R^{n},$ 
 \begin{equation*}
 	(x-y)^{T}(F(x)-F(y))\geq 0.
 \end{equation*} 
 In this work, we will consider a weaker assumtion on $ F$:
 \begin{equation*}
 (H_{0})  \quad \quad \quad F ~~~\text{is a} ~~~ \mathcal{P}_{0}\text{-function}
 \end{equation*}
 to prove the convergence of our approach. We recall the following definitions of$ \mathcal{P}_{0} $ and $ \mathcal{P} $-function.  We say that  $ F : \R^{n} \to \R^{n} $  is  a $ \mathcal{P}_{0}$-function (respectively  $ \mathcal{P} $ functions ) if $ \forall x, y \in \R^{n} $ with $  x \ne y,$ there exists an index $ i_{0} \in \{1, 2, ..., n\}$ such that 
 \begin{equation*}
  (x_{i_{0}}-y_{i_{0}})[F_{i_{0}}(x)-F_{i_{0}}(y)] \geq0,
 \end{equation*}

 \begin{equation*}
(\text{respectively } \quad (x_{i_{0}}-y_{i_{0}})[F_{i_{0}}(x)-F_{i_{0}}(y)] >0).
 \end{equation*}
 It is important to notice that the index $ i_{0}$ can depend on $ x $ and $ y $.
 \section{ smoothings approximation functions}
 In this section, we present a new smoothing function for NCP. A function $ \phi : \R^{2} \to \R $ is said to be a NCP function if $ \phi $ satisfies 
 \begin{equation}
  \phi(a, b)=0  \iff a\geq 0,~ b\geq 0,~ ab=0.
 \end{equation}
 An example of such function is $ \phi_{\min}: \R^{2} \to \R$,
 \begin{equation*}
 \phi_{\min}(a, b)= \min \{a, b\}.
 \end{equation*}
Then, $ \phi_{\min} $ is a NCP function. Problem \eqref{1} is then  equivalent to the following system of nonlinear equations:
 \begin{equation}\label{2}
 H(x)=\left(
 \begin{array}{llllll} 
 \phi_{min}(x_{1}, F_{1}(x))\\
\phi_{min}(x_{2}, F_{2}(x))\\
~~\quad \quad  \vdots \\ 
\phi_{min}(x_{n}, F_{n}(x))
 \end{array}
 \right)=0.
 \end{equation}
 This system is clearly non-smooth; classical Newton-like methods can not be used to try to solve it. To overcome this difficulty, there exist several semi-smooth approaches. These techniques may present difficulties to converge. An efficient approach is to approximate \eqref{2} by a smooth one. The following subsection introduces some smoothing functions and establishes different properties that will be useful for our study. 
 \subsection{$\theta$-smoothing}
 In this section, we elaborate on how such a regularized function can be actually built up from the function \eqref{2}. Our smoothing technique is based on the continuous approximation of a more elementary object, namely the step function. The step function is understood here to be the function $ \Im: \R_{+} \to \{0,~1\}$ defined as 
 
  \begin{equation}
\Im(t)=\left\{
 \begin{array}{llllll}
 0 \quad & \text{if} \quad t= 0,\\ 
 1  \quad & \text{if}  \quad t >0.
 \end{array}
 \right.
 \end{equation}
 As an indicator of positive arguments $ t>0 $ over $ \R_{+} $, the step function $ \Im $ "descriminates" the argument $ t=0 $ by assigning a zero value to it. The price to be paid for this sharp detection is the discontinuity of $ \Im $ at $ t=0.$ We wish to have a regularization of $ \Im, $ that is, a family of functions 
 \begin{equation} \label{98765121212}
 	\{\tilde{\Im}(.,r): \R_{+} \to [0, 1), r>0\},
 \end{equation} 
 such that
 \begin{itemize}
 	\item $ \tilde{\Im}(.,r) $  is a smooth function of $ t \geq 0, $ for all $ r>0; $
 	\item  $ \tilde{\Im} $ is continuous with respect to $ r, $ in some functional sense;
 	\item lim$ _{r \downarrow 0} ~\tilde{\Im}(.,r) =\Im(.), $ in some functional sense. 
 \end{itemize}
 To obtain such a family, we follow the methodology developed by Haddou and his coauthors \cite{HADDOU, 27}, the key ingredient of which is a smoothing function. This notion turned out to be a versatile tool in a wide variety of pure and applied mathematical problems \cite{M. Haddou and P. Maheux, 28, 29, 30}. We begin with a "father" function, from which all other regularized functions will be generated.
  \begin{definition}
  ($ \theta$-smoothing function). A function $ \theta : \R \to [0, 1) $ is said to be a $ \theta$-smoothing function if it is continuous, nondecreasing, concave, and 
  \begin{equation} \label{12344321}
  	\begin{array}{llllll}
  	~~~~~~~~ \theta(0)=0,\\
  	 \lim\limits_{\substack{ t \rightarrow +\infty}} \theta(t)=1.
  	\end{array}
  \end{equation}
 The two most common examples of smoothing functions are:
 \begin{enumerate}
 	\item the rational function $ \theta^{1}:\R \to (- \infty, 1) $ defined by 
 	\begin{equation} \label{exp1}
 		\theta^{1}(t)=\dfrac{t}{t+1} \quad \text{for} \quad t\geq 0 \quad \text{and} \quad \theta^{1}(t)=t \quad \text{for} \quad t\leq 0.
 	\end{equation}
 
 	\item the exponential  function $ \theta^{2}:\R  \to (- \infty, 1) $ defined by 
 	\begin{equation}\label{exp2}
 	\theta^{2}(t)=1- \exp(-t).
 	\end{equation}
  \end{enumerate}
  \end{definition}
A more general "recipe" to build such function is to consider nonincreasing probability density functions \\
 $f: \R_{+} \to \R_{+} $ and then take the corresponding cumulative distribution function on $ \R_{+} $ i.e.,
\begin{equation}
\theta(t)= \int_{0}^{t}f(y)dy, \quad t \geq 0,
\end{equation}
we complete the definition of $ \theta $ on $ \R_{-} $ by $ \theta(t)=t $ to get a continuous, nondecreasing function. The nonincreasing assumtion on $ f $ gives the concavity of $ \theta.$ Once a favorite $ \theta$-smoothing has been selected, the next step is to dilate or compress it in order to produce a family of regularized functions for the step function $ \Im.$
  \begin{definition}
  	($ \theta$-smoothing family). Let $ \theta$ be a  $ \theta$-smoothing function. The family of functions
  	\begin{equation}
  	\left\{ \theta_{r}(t):=\theta(\frac{t}{r}), ~~r>0\right\},
  	\end{equation}
  	is said to be the $ \theta$-smoothing family associated with $ \theta$.
  \end{definition}
  Obviously, $ \theta_{r} $ is a smooth function of $ t \geq 0 $ for all $ r>0. $ It is also continuous with respect to $ r $ at each fixed $ r \geq 0.$ From the defining properties \eqref{12344321}, it can be readily shown that 
  \begin{equation}
  	 \lim\limits_{\substack{ r \rightarrow 0}} \theta_{r}(t)= \Im(t), \quad \forall t\geq 0.
  \end{equation}
  	In other words, $ \Im $ is the limit of $ \theta_{r} $  in the sense of pointwise convergence. Thus, $ \{\Im(., r)=\theta_{r}, ~r>0\} $ is a good family of regularized functions in the sense of \eqref{98765121212}. Associated with the two examples \eqref{exp1}-\eqref{exp2} are:
  
  \begin{enumerate}
  	\item the rational family $ \theta^{1}_{r}: \R \to (- \infty, 1) $ defined by 
  	\begin{equation} \label{expf1}
  	\theta_{r}^{1}(t)=\dfrac{t}{t+r} \quad  \text{for} \quad  t \geq 0 \quad \text{and} \quad \dfrac{t}{r} \quad \text{for} \quad t\leq 0.
  	\end{equation}
  	
  	\item the exponential  family $ \theta^{2}_{r}: \R \to (- \infty, 1) $ defined by 
  	\begin{equation}\label{expf2}
  	\theta_{r}^{2}(t)=1- \exp(-t/r).
  	\end{equation}
  \end{enumerate}
  
 Figure 1 display the two families \eqref{expf1}-\eqref{expf2} for  a few values of the  parameter $ r $. We can see that the smaller $ r $ is, the steeper is the slope at $ t=0 $ and the closer to $ \Im $ the function is. 
 
 \begin{figure}[H]\label{fig1}
 	\begin{minipage}[H]{.46\linewidth}
 		\begin{center}
 			\includegraphics[width=9cm,height=7cm]{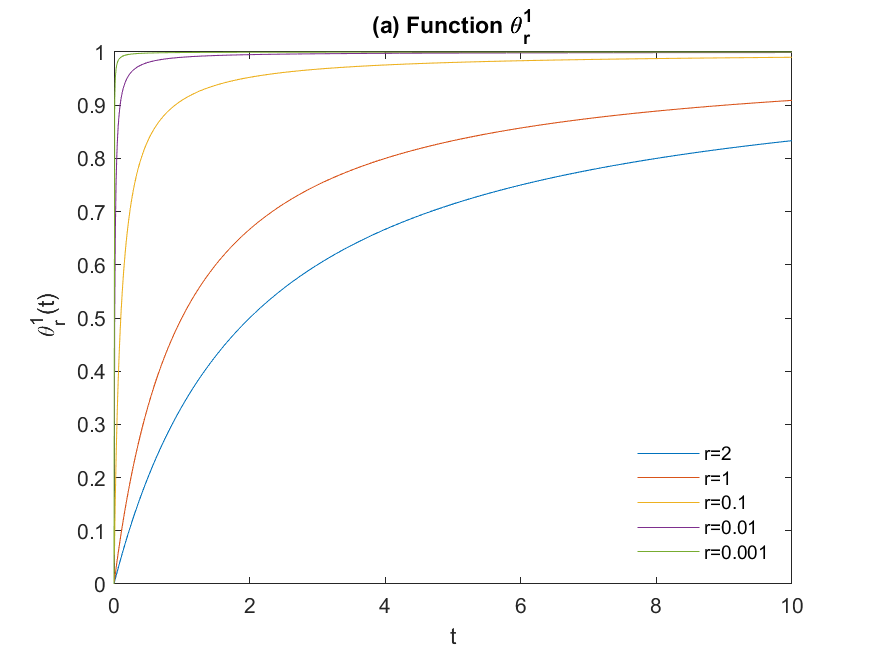}
 		\end{center}
 	\end{minipage} \hfill
 	\begin{minipage}[H]{.46\linewidth}
 		\begin{center}
 			\includegraphics[width=9cm,height=7cm]{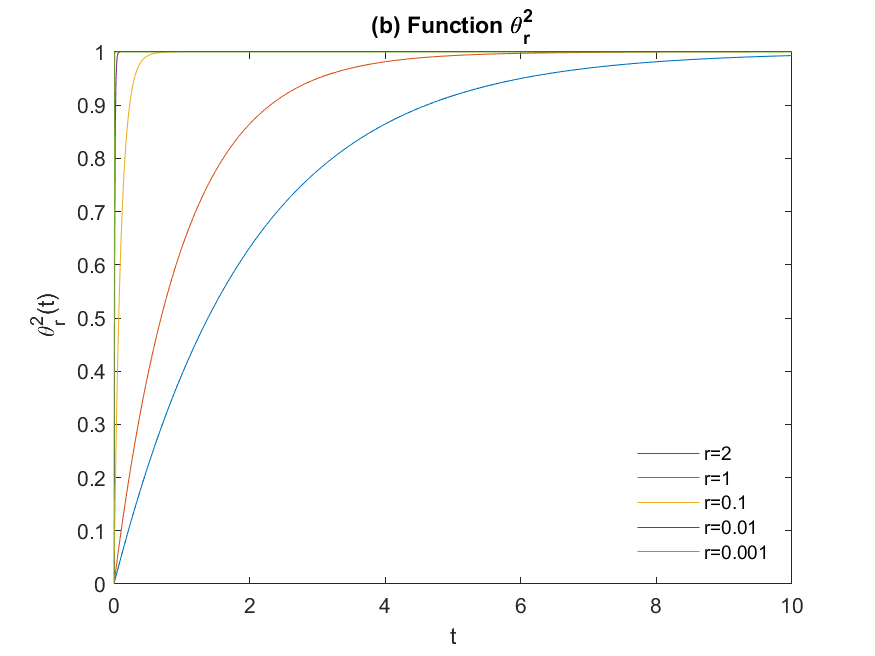}
 		\end{center}
 	\end{minipage}
 	\caption{Function $ \theta_{r}$ for a few values of $ r $.}
 \end{figure}
 \subsubsection{$ \theta $-smoothing of a complementarity condition}
 A $\theta$-smoothing function paves the way for a smooth approximation of a complementarity condition. \\
 Let $ (x,z)\in \R^{2}$ be two scalars such that
 \begin{equation} \label{233323}
 0 \leq x \perp z \geq 0, 
 \end{equation}
 that is,
 \begin{equation*}
 x\geq 0,\quad z\geq 0, \quad xz=0.  
 \end{equation*}
 In the $ (x,z)$-plane, the set of points obeying \eqref{233323} is the union of the two semi-axes  $ \{x \geq 0,~z=0\}~$ and \\$ \{x=0,~ z\geq 0\}.$ Visually, the nonsmoothness of \eqref{233323} is manifested by the "kink" at the corner $ (x,z)=(0,0).$ It is also clear that the corresponding set is non-convex. We consider two possible smooth approximations of \eqref{233323}, depending how it is rewritten in terms of the step function $ \Im $.
 \begin{lemma}
 	\cite{HADDOU} Assuming $ x\geq 0 $ and $ z\geq 0 $, we have the equivalence  
 	\begin{equation}\label{889923}
 	xz=0 \iff \Im(x)+\Im(z) \leq 0
 	\end{equation}
 \end{lemma}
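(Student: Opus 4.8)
The plan is to read off the equivalence from the range of the step function, exploiting that $\Im$ takes only the values $0$ and $1$ on $\R_{+}$. First I would record the basic dictionary: for a nonnegative argument, $\Im(x)=0$ precisely when $x=0$ and $\Im(x)=1$ precisely when $x>0$, and similarly for $z$. Consequently $\Im(x)+\Im(z)$ is an integer in $\{0,1,2\}$, and since it is always nonnegative, the stated bound $\Im(x)+\Im(z)\leq 0$ collapses to the equality $\Im(x)+\Im(z)=0$. This reduction turns the right-hand side into the pair of simultaneous conditions $\Im(x)=0$ and $\Im(z)=0$, that is, $x=0$ and $z=0$. I would carry out this normalization before splitting into the two implications, since it is what makes the whole statement a finite case check over the four sign patterns of $(x,z)$.

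For the reverse implication I would assume $\Im(x)+\Im(z)\leq 0$. By the reduction above this yields $x=0$ and $z=0$, so $xz=0$ follows at once; this direction needs no further work.

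The forward implication is where I expect the main obstacle to sit. Starting from $xz=0$ with $x,z\geq 0$, the vanishing of the product is equivalent to the \emph{disjunction} ``$x=0$ or $z=0$'', since a product of nonnegative reals is zero exactly when one factor is. The task is then to pass from this disjunction to the \emph{conjunction} ``$x=0$ and $z=0$'' forced by the threshold on the right-hand side. This is precisely the delicate step, and it is governed entirely by the numerical value of the threshold: a boundary configuration such as $x=0,\ z>0$ satisfies $xz=0$ while giving $\Im(x)+\Im(z)=1$, so the compatibility of the two sides at such points is exactly what must be secured. I would therefore focus the entire argument on this forward direction and on the role of the threshold, treating the reverse implication and the range reduction as routine.
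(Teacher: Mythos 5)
Your reduction of the right-hand side is correct as far as it goes: for $x,z\geq 0$ the sum $\Im(x)+\Im(z)$ lies in $\{0,1,2\}$, so the printed condition $\Im(x)+\Im(z)\leq 0$ collapses to $x=0$ and $z=0$, and your reverse implication is immediate. But the ``delicate step'' you defer in the forward direction cannot be secured by any argument: the configuration $x=0$, $z>0$ that you yourself exhibit satisfies $xz=0$ while $\Im(x)+\Im(z)=1>0$, so it is not a boundary case to be reconciled --- it is an outright counterexample to the equivalence as printed. What you have actually found is a typo in the statement: the bound should be $\Im(x)+\Im(z)\leq 1$, not $\leq 0$. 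This is confirmed by the paper's own use of the lemma, since the smooth surrogate \eqref{777333} obtained by ``replacing $\Im$ by $\theta_{r}$'' reads $\theta_{r}(x)+\theta_{r}(z)\leq 1$, and by the companion equivalence \eqref{8333213}, whose right-hand side $\Im(x+z)$ likewise takes the value $1$ off the origin. (Note also that the paper cites \cite{HADDOU} and supplies no proof of this lemma, so there is no internal argument to compare against; the corrected threshold is the one in the original reference.)

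With the threshold fixed at $1$, your framework closes instantly and is the intended proof: $\Im(x)+\Im(z)\leq 1$ holds iff not both indicators equal $1$, i.e.\ iff not both $x>0$ and $z>0$, i.e.\ iff $x=0$ or $z=0$, which for nonnegative reals is exactly $xz=0$. The disjunction you derived in the forward direction is then precisely what is needed, and the problematic passage from a disjunction to a conjunction never arises. The methodological point is worth absorbing: when a finite case check over four sign patterns leaves one pattern irreconcilable, the correct conclusion is that the statement (here, its numerical threshold) is misprinted, not that a cleverer argument for the remaining case is missing.
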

The equivalence \eqref{889923} suggests us to impose 
\begin{equation} \label{777333}
	 x \geq 0, \quad  z  \geq 0, \quad \theta_{r}(x)+\theta_{r}(z) \leq 1,
\end{equation}
for $ r>0,$ as a smooth approximation of \eqref{233323}. Replacing $ \Im $ by $ \theta_{r}$ in \eqref{889923} is logical. Replacing "$ \leq $" by "$ = $" in \eqref{889923} and ther \eqref{777333} seems to be a bold move, but this is motivated by the fact that we want an equality to be mounted into the system of equations. Some times an additional assution (strict complementarity $ x+z>0 $) is made to get such equations.
 \begin{lemma}\label{120967}
 	\cite{HADDOU}
 	Assuming $ x\geq 0 $ and $ z\geq 0 $, we have the equivalence
 	\begin{equation}\label{8333213}
 		xz=0 \iff \Im(x)+\Im(z) =\Im(x+z).
 	\end{equation}	
 \end{lemma}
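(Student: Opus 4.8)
The plan is to prove the equivalence by reducing it to an elementary identity for the two-valued function $\Im$, rather than manipulating $x$ and $z$ directly. Since $\Im$ takes values only in $\{0,1\}$ on $\R_{+}$ and both $x\geq 0$ and $z\geq 0$ are assumed, each of $\Im(x)$, $\Im(z)$ is a Boolean quantity, and the whole statement becomes a finite check. The cleanest route is to first isolate the behaviour of $\Im$ on the sum $x+z$, and then observe that the claimed equality is nothing more than a statement about Boolean addition.

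First I would record the structural fact that, for nonnegative arguments, $\Im(x+z)=\max\{\Im(x),\Im(z)\}$. Indeed, if $x+z=0$ then nonnegativity forces $x=z=0$, so $\Im(x+z)=0=\max\{0,0\}$; otherwise $x+z>0$, hence $\Im(x+z)=1$, and at least one summand is strictly positive, so $\max\{\Im(x),\Im(z)\}=1$ as well. With $a:=\Im(x)$ and $b:=\Im(z)$ in $\{0,1\}$, one always has the decomposition $a+b=\max\{a,b\}+\min\{a,b\}$. Combining this with the identity just established gives that $\Im(x)+\Im(z)=\Im(x+z)=\max\{a,b\}$ holds precisely when $\min\{a,b\}=0$, i.e. when at least one of $\Im(x),\Im(z)$ vanishes. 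Since $\Im(t)=0\iff t=0$ on $\R_{+}$, this is exactly the condition that $x=0$ or $z=0$, which under the standing hypothesis $x,z\geq 0$ is the same as $xz=0$. This chain of equivalences delivers both implications simultaneously.

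There is no genuine obstacle here, as the statement is a two-by-two case check; the only point requiring care is the repeated use of nonnegativity, first to pass from $x+z=0$ to $x=z=0$ (without which the forward implication would fail, since a negative term could cancel a positive one), and again to read off $x=0$ or $z=0$ from $\min\{\Im(x),\Im(z)\}=0$. If a reader prefers, I would also note the consistency with the preceding lemma: given the identity $\Im(x+z)=\max\{\Im(x),\Im(z)\}$, the equality $\Im(x)+\Im(z)=\Im(x+z)$ and the inequality $\Im(x)+\Im(z)\leq 1$ single out exactly the same configurations of $(x,z)$, so the two characterizations of $xz=0$ agree.
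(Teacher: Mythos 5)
Your proof is correct. For the record, the paper itself offers no proof of this lemma: it is imported by citation from \cite{HADDOU}, so there is no internal argument to compare yours against. Your route---first establishing $\Im(x+z)=\max\{\Im(x),\Im(z)\}$ for $x,z\geq 0$, then invoking the identity $a+b=\max\{a,b\}+\min\{a,b\}$ with $a=\Im(x)$, $b=\Im(z)\in\{0,1\}$, so that the claimed equality holds precisely when $\min\{a,b\}=0$---is a compact algebraic packaging of the four-case Boolean check that a direct proof (and presumably the cited source) would carry out case by case, and it correctly isolates the only two places where nonnegativity is genuinely used: to deduce $x=z=0$ from $x+z=0$ in the forward direction of your $\max$ identity, and to translate $\min\{\Im(x),\Im(z)\}=0$ back into $xz=0$. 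One small remark on your closing consistency aside: you compare against the condition $\Im(x)+\Im(z)\leq 1$, whereas the paper's preceding lemma as printed reads $\Im(x)+\Im(z)\leq 0$. Your version is the right one---it is the condition consistent with the smoothed constraint $\theta_{r}(x)+\theta_{r}(z)\leq 1$ in \eqref{777333} that the paper derives from it---since with $\Im$ valued in $\{0,1\}$ the printed condition $\Im(x)+\Im(z)\leq 0$ would characterize $x=z=0$ rather than $xz=0$; so your aside silently corrects what is evidently a typo in the paper.
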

 The equivalence \eqref{8333213} suggests us to impose 
   
 \begin{equation} \label{29292929210}
 x \geq 0, \quad  z  \geq 0, \quad \theta_{r}(x)+\theta_{r}(z)=\theta_{r}(x+z),
 \end{equation}
 for $ r>0,$ as a smooth approximation of \eqref{233323}.\\
For both approximations \eqref{777333} and \eqref{29292929210}, the paramater $ r $ has to be driven to zero.  
 \subsection{A New smoothing function using $ \theta$-function}
 Our aim is to propose a large class of $ \theta$-functions for which the problems
 \begin{equation} \label{55}
 	 x^{(r)} \geq 0, ~~F(x^{(r)})\geq 0~~ \quad \text{and} \quad ~~\theta_{r}(x^{(r)})+\theta_{r}(F(x^{(r)}))=1,
 \end{equation} 
 are well posed and any limit point of $ (x^{(r)}) $ when $ r $ goes to $ 0,$ is a solution of (NCP).
 In the multidimensional case, the equation just above has to be interpreted as a system of $ n$ equations,
  \begin{equation*}
  \theta_{r}(x_{i}^{(r)})+\theta_{r}(F_{i}(x^{(r)}))=1, \quad   i: 1...n.
 \end{equation*}
 Note that the relation \eqref{55} is symmetric in $ x $ and $ F(x) $. Thus, our problem can be seen as a fixed point problem for the function $ F_{r, \theta}(x) $ defined just below. Indeed, the equation \eqref{55} is equivalent to 
 
 \begin{equation*}
 x=\theta^{-1}_{r}(1-\theta_{r}(F(x)))=r\theta^{-1}(1-\theta(F(x)/r))=: F_{r, \theta}(x).
 \end{equation*}
 By symmetry of the equation \eqref{55}, we also have the relations: 
 \begin{equation*}
 F(x)=\theta_{r}^{-1}(1-\theta_{r}(x))=r\theta^{-1}(1-\theta(x/r)),
 \end{equation*}
 but we shall not go that direction. We propose another way to approximate a solution of the (NCP) problem as follows (see \cite{M. Haddou and P. Maheux}). Let $ \psi_{r}(t)=1-\theta_{r}(t) $, the relation \eqref{55} is equivalent to the three following equalities
 \begin{equation*}
 \begin{array}{llllll}
\psi_{r}(x)+\psi_{r}(F(x))=1=\psi_{r}(0), \\
\\
 \psi_{r}^{-1} \left[\psi_{r}(x)+\psi_{r}(F(x))\right]=0 \quad \text{and}\\
 \\
 r \psi^{-1}\left[\psi\left(\frac{x}{r}\right)+\psi\left(\frac{F(x)}{r}\right)\right]=0.
 \end{array}	
 \end{equation*} 
 (with $ \psi=\psi=1-\theta$). For the sequal, we set for any $ x, y \in \R^{n} $ and any $ r>0 $
 \begin{equation} \label{77}
 G_{r}(x,y):=r \psi^{-1} \left[ \psi\left(\frac{x}{r}\right)+\psi\left(\frac{y}{r}\right)\right].
 \end{equation}
 First, we characterize the solutions $ (x, y) $ of $ G_{r}(x,y) =0$ when $ \psi $ satisfies some conditions independent of $ F $. \\
 Let $ 0<a<1.$ We say that $ \psi $ satisfies $ (H_{a}) $ if there exists $ s_{a}>0 $ such that, for all $ s \geq s_{a}, $
\begin{equation*}
	\psi(s) \leq \frac{1}{2} \psi(as) \quad \text{or equivalently} \quad \frac{1}{2}+\frac{1}{2}\theta(as) \leq \theta(s). \quad \quad  \quad \quad \quad \quad \quad  \quad  (H_{a})
\end{equation*} 
 The condition $ (H_{a}) $ imposes that the decay of $ \psi(s) $ is under some uniform control for large $ s $ or in terms of $ \theta $ that $ \theta(s) $ should grow enough quickly with some uniformity for large $ s $.
 Since $ \psi $ and $ \theta $ are monotone, it is interesting to take $ a$ as  large as possible in the condition $ (H_{a}) $ since $ (H_{a}) \Longrightarrow (H_{b})$ for $ b<a.$ \\
 Note that we can never take $ a=1 $ because $ \theta \leq 1 $ unless  $ \theta $  is constant and equal to one for large $ s $. But in some cases, $ a $ can be chosen as close to $ 1,$ see for instance $ \theta^{2}.$\\
 One can obtain by simple calculations that:
 
 \begin{enumerate}
	\item  For $ \theta^{1},$ we have
 \begin{equation*}
 \psi^{1}(t)=\left\{
 \begin{array}{llllll}
 \dfrac{1}{t+1} \quad & \text{if} \quad t \geq 0,\\ 
  	1-t  \quad & \text{if}  \quad t <0,
 \end{array}
 \right.
 \end{equation*}
 and the condition $ (H_{a}) $ is only satisfied for $ 0<a<1/2 $ with $ s_{a}\geq \dfrac{1}{1-2a}$.\\
 
 	\item For $ \theta^{2},$ we have $ \psi(t)=e^{-t} $ and the condition $ (H_{a})$ is satisfied for any $ 0<a<1 $ with $  s_{a}=\dfrac{\ln 2}{1-a}.$ 
 \end{enumerate}
 From now on, all the results use the function $ \psi $. Obviously, everything can be easily transposed on $ \theta.$ The following Lemma compare the function $ G_{r} $ defined in \eqref{77} to the $ \min$ function and will be useful for the rest of our analysis. 
 \begin{lemma}
 If $ \psi : \R \to ]0, +\infty[ $ is an invertible non-increasing function, then for any $ (s, t) \in \R^{2}$ and any  $ r>0$  \\
 \begin{equation*}
 	G_{r}(s,t) \leq \min(s,t).
 \end{equation*}
 \end{lemma}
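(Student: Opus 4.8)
The plan is to exploit the monotonicity of $\psi$ directly, reducing the whole statement to the single fact that $\psi$ is strictly positive. First I would record that an invertible non-increasing function is in fact \emph{strictly} decreasing: a non-increasing function that is constant on some interval cannot be injective, so invertibility rules this out. Consequently $\psi^{-1}$ is well defined on the range $]0,+\infty[$ and is itself strictly decreasing, hence order-reversing: $A \geq B$ implies $\psi^{-1}(A) \leq \psi^{-1}(B)$.

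Next I would fix $(s,t)\in\R^{2}$ and $r>0$ and set $u=s/r$, $v=t/r$. The crucial input is that $\psi$ takes values in $]0,+\infty[$, so in particular $\psi(v)>0$. Therefore $\psi(u)+\psi(v)\geq\psi(u)$, and applying the decreasing map $\psi^{-1}$ yields $\psi^{-1}[\psi(u)+\psi(v)]\leq\psi^{-1}[\psi(u)]=u$. Multiplying through by $r>0$ gives exactly $G_{r}(s,t)\leq s$, by the definition \eqref{77} of $G_r$.

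Since the expression \eqref{77} is symmetric in its two arguments, repeating the same computation with the roles of $s$ and $t$ exchanged (now using $\psi(u)>0$) gives $G_{r}(s,t)\leq t$. Combining the two bounds yields $G_{r}(s,t)\leq\min(s,t)$, which is the claim. Notably, no case distinction or a priori ordering of $s$ and $t$ is required, since both inequalities hold unconditionally.

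I expect no genuine obstacle here beyond bookkeeping on the direction of the inequalities. The only point that deserves care is the justification that invertibility upgrades ``non-increasing'' to ``strictly decreasing'': this is precisely what guarantees that $\psi^{-1}$ is order-\emph{reversing} rather than merely order-preserving up to ties, and it is this strictness that makes the one-line argument from the positivity of $\psi$ go through cleanly.
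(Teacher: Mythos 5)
Your proposal is correct and follows essentially the same argument as the paper: both rest on the positivity of $\psi$ giving $\psi(s/r)\leq\psi(s/r)+\psi(t/r)$ and the order-reversing property of $\psi^{-1}$, with the paper invoking symmetry to assume $s=\min(s,t)$ while you prove both one-sided bounds unconditionally. Your extra remark that invertibility upgrades non-increasing to strictly decreasing is a small justification the paper leaves implicit, not a different route.
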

 \begin{proof}
 Let $ s, t \in \R $ be fixed. By symmetry, we can assume that $ s=min(s,t).$ Since $ \psi \geq 0,$ we obviously have 
 \begin{equation*}
 \psi(s/r)\leq \psi(s/r)+\psi(t/r).
 \end{equation*}
 By the fact that $ \psi $ is invertible and non-increasing, we get 
 \begin{equation*}
 \psi^{-1}(\psi(s/r)+\psi(t/r))\leq s/r.
 \end{equation*}
 Thus, from the definition of $ G_{r} $ we conclude that 
 \begin{equation*}
 G_{r}(s,t)=r\psi^{-1}[\psi(s/r)+\psi(t/r)]\leq s=min(s,t).
 \end{equation*}
\end{proof}
 The next theorem shows how the condition $ (H_{a})$ gives information about the behavior of $ G_{r}.$
 \begin{theorem}
 Let $ \psi: \R \to ]0, + \infty [$ be an invertible non-increasing function such that 
 
 \begin{equation*}
  \lim\limits_{\substack{ t \rightarrow - \infty}}\psi(t)=+\infty, \psi(0)=1, \text{and}  \lim\limits_{\substack{t \rightarrow + \infty}}\psi(t)=0.
 \end{equation*}
 
 If $ \psi $ satisfies the condition $ (H_{a})$ for some $ a \in~ ]0, 1[,$ then for all $ s,t \in \R, $
 \begin{equation*}
 	\lim\limits_{\substack{r \searrow 0}}G_{r}(s,t)=0 \iff \min(s,t)=0.
 \end{equation*}
 \end{theorem}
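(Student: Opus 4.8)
The plan is to prove the two implications separately, leaning on the comparison Lemma $G_r(s,t)\le\min(s,t)$ for the easy estimates and on the growth condition $(H_a)$ for the hard one. Throughout I will use that, since $\psi$ is a continuous non-increasing bijection from $\R$ onto $]0,+\infty[$ with $\psi(0)=1$, its inverse $\psi^{-1}\colon\,]0,+\infty[\to\R$ is continuous, non-increasing, and satisfies $\psi^{-1}(1)=0$.

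For the implication $\Leftarrow$, I would assume $\min(s,t)=0$; by symmetry of $G_r$ I may take $s=0$ and $t\ge 0$. Then $\psi(s/r)=\psi(0)=1$, so $G_r(0,t)=r\,\psi^{-1}\!\big[1+\psi(t/r)\big]$. If $t>0$ then $t/r\to+\infty$ as $r\searrow 0$, hence $\psi(t/r)\to 0$ and, by continuity of $\psi^{-1}$ at $1$, the bracketed factor tends to $\psi^{-1}(1)=0$; multiplying by $r\to 0$ gives $G_r(0,t)\to 0$. If $t=0$ then $G_r(0,0)=r\,\psi^{-1}(2)$, which tends to $0$ because $\psi^{-1}(2)$ is a fixed finite constant. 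In either case $\lim_{r\searrow 0}G_r(s,t)=0$.

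For the implication $\Rightarrow$ I would argue by contraposition, splitting on the sign of $\min(s,t)$. If $\min(s,t)<0$, the comparison Lemma gives $G_r(s,t)\le\min(s,t)<0$ for every $r>0$, so $G_r(s,t)$ stays bounded above by the negative constant $\min(s,t)$ and cannot converge to $0$. The substantial case is $\min(s,t)>0$, where I must produce a positive lower bound for $G_r$. Taking $s=\min(s,t)>0$ (again by symmetry), monotonicity of $\psi$ yields $\psi(t/r)\le\psi(s/r)$, so $\psi(s/r)+\psi(t/r)\le 2\psi(s/r)$; applying the non-increasing map $\psi^{-1}$ gives $G_r(s,t)\ge r\,\psi^{-1}\!\big[2\psi(s/r)\big]$. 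Now I invoke $(H_a)$: for $r$ small enough that $s/r\ge s_a$, the inequality $2\psi(s/r)\le\psi(as/r)$ holds, and applying $\psi^{-1}$ once more produces $\psi^{-1}\!\big[2\psi(s/r)\big]\ge as/r$. Hence $G_r(s,t)\ge r\cdot(as/r)=as>0$ for all sufficiently small $r$, so $\lim_{r\searrow 0}G_r(s,t)\ge as\neq 0$.

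The hard part will be exactly the case $\min(s,t)>0$: it is the only place where the hypothesis $(H_a)$ is genuinely used, and the whole argument hinges on chaining the two inequalities $\psi(s/r)+\psi(t/r)\le 2\psi(s/r)$ and $2\psi(s/r)\le\psi(as/r)$ correctly through the order-reversing map $\psi^{-1}$, while keeping track of the threshold $s/r\ge s_a$ that makes $(H_a)$ applicable. The remaining direction and the $\min(s,t)<0$ case should be essentially immediate from the comparison Lemma together with the continuity of $\psi^{-1}$.
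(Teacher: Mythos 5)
Your proposal is correct and takes essentially the same route as the paper's proof: the converse via $\psi(0)=1$, the cases $t=0$ and $t>0$, and continuity of $\psi^{-1}$ at $1$; and the forward direction via the chain $\psi(s/r)+\psi(t/r)\le 2\psi(s/r)\le \psi(as/r)$ under $(H_a)$ for $s/r\ge s_a$, giving the lower bound $G_r(s,t)\ge as$. The only cosmetic difference is that you argue by contraposition, treating $\min(s,t)<0$ explicitly through the comparison lemma, whereas the paper first deduces $\min(s,t)\ge 0$ from that lemma and then reaches a contradiction from $\min(s,t)>0$.
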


\begin{proof}
We start by the direct implication \\
Let $ s, t \in \R $ be fixed. By Lemma 3.5, for any $ r>0 $ we have $ G_{r}(s,t)\leq \min(s,t) $ and, then $ \min(s,t) \geq 0$. We finish the proof by contradiction as follows. Asumme that $ s=\min(s,t) >0.$\\
Since $ \psi $ is nonincreasing and $ s \leq t,$ we have 
\begin{equation*}
\psi(s/r)+\psi(t/r) \leq 2\psi(s/r).
\end{equation*}
By assumption $ (H_{a}) $ and for $ r $ small enough, $ 2\psi(s/r) \leq \psi(as/r)$. Indeed, the ratio $ s/r$ goes to infinity as $ r $ goes to $ 0 $ because $ s>0 $. Hence 
\begin{equation*}
\psi(s/r)+\psi(t/r) \leq \psi(as/r).
\end{equation*}
Now since $ \psi^{-1}$ is nonincreasing, 
\begin{equation*}
as/r \leq \psi^{-1}(\psi(s/r)+\psi(t/r)),
\end{equation*}
or equivalently with $ r $ small enough, $ s\leq a^{-1}G_{r}(s,t).$\\
Passing to the limit, $ \lim\limits_{\substack{r \searrow 0}}G_{r}(s,t)=0 $ and , then $ s \leq 0 $ in contradiction with $ s>0. $ \\
Now, we prove the converse ($ \Leftarrow $):\\
Assume $ s=\min(s,t)$. Hence, $ s=0. $ Since $ \psi(0)=1,$ we have 
\begin{equation*}
G_{r}(s,t)=r\psi^{-1}(1+\psi(t/r)) .
\end{equation*}
If $ t=0 $ then $ \lim\limits_{\substack{r \searrow 0}}G_{r}(s,t) =$	$ 	\lim\limits_{\substack{r \searrow 0}}r\psi^{-1}(2)=0.$ \\
 If $ t>0 $ then $ \lim\limits_{\substack{r \searrow 0}}\psi(t/r)=0.$ Thus $ \lim\limits_{\substack{r \searrow 0}}G_{r}(s,t)=0$ by continuity of $ \psi^{-1}.$\\
In both cases, we have $ \lim\limits_{\substack{r \searrow 0}}G_{r}(s,t)=0 $. 
\end{proof}
For both $ \theta_{r}^{1}  $ and $ \theta_{r}^{2}  $ examples, the assertion of Theorem 3.6 is clearly satisfied. Indeed direct computations lead to 
\begin{enumerate}
	\item For $ s>0 $ and  $ t>0 $ such that $ \dfrac{1}{s}+\dfrac{1}{t}\leq \dfrac{1}{r},$ we have the following explicit expression 
	\begin{equation}
		G_{r}^{1}(s,t)=\dfrac{st-r^{2}}{s+t+2r}
	\end{equation}
	Note that the denominator is not zero when $ s,t $ are non-negative even when $ s=t=0$. In addition, when $ \min(s,t)>0 $ we have $ \lim\limits_{\substack{r \searrow 0}}G_{r}^{1}(s,t) =\dfrac{st}{s+t}<\min(s,t).$ 
	\item For any $ s,t  \in \R $, we have the following explicit expression 
	\begin{equation} \label{78781}
		G^{2}_{r}(s,t)=-r\log(e^{-s/r}+e^{-t/r}).
	\end{equation}
	Assume $ s=\min(s,t)$. Then we have $ s-r\log(2) \leq G_{r}^{2}(s,t) $ because 
	\begin{equation*}
		e^{-s/r}+	e^{-t/r} \leq 2	e^{-s/r}.
	\end{equation*}
	Thus, $ \min(s,t)-r\log(2)\leq G_{r}^{2}(s,t)\leq \min(s,t).$ Passing to the limit as $ r $ goes to 0, we conclude that $ \lim\limits_{\substack{r \searrow 0}}G_{r}^{2}(s,t)=\min(s,t).$ 
\end{enumerate}
Figure 2. illustrate the behaviour of $ G_{r}(x,-x)$.
\begin{figure}[H]\label{fig1}
	\begin{center}
		\includegraphics[width=9cm,height=7cm]{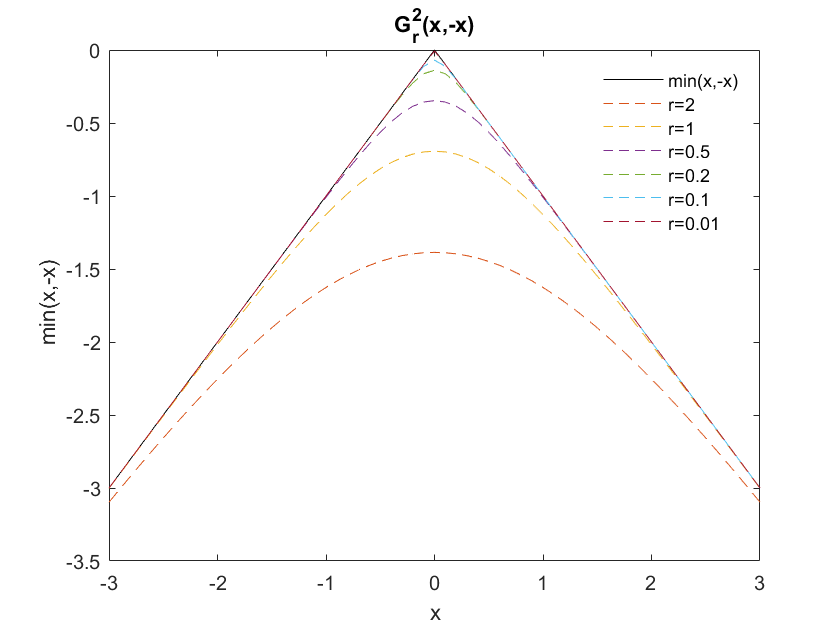}
	\end{center}
	\caption{Comparaison of $ G_{r}(x,-x) $ and $ \min(x,-x) $.}
\end{figure}

Now, we focus on the case where $ \psi $ satisfies $ (H_{a}) $ for all $ a\in ]a, 1[ $ and prove a stronger result.
\begin{theorem}
Let $ \psi: \R \to ]0, + \infty [$ be an invertible non-increasing function such that 

\begin{equation*}
\lim\limits_{\substack{ t \rightarrow - \infty}}\psi(t)=+\infty, \psi(0)=1, \text{and}  \lim\limits_{\substack{t \rightarrow + \infty}}\psi(t)=0.
\end{equation*}

If $ \psi $ satisfies  $ (H_{a})$ for all $ a \in~ ]0, 1[,$ then for any $ s,t >0, $
\begin{equation*}
\lim\limits_{\substack{r \searrow 0}}G_{r}(s,t)=\min(s,t).
\end{equation*}
\end{theorem}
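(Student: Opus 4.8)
The plan is to establish the result by a two-sided squeeze, combining the upper bound already furnished by Lemma 3.5 with a sharp lower bound that exploits the strengthened hypothesis that $(H_a)$ now holds for \emph{every} $a\in\,]0,1[$. By the symmetry of $G_r$ in its two arguments I would assume without loss of generality that $s=\min(s,t)$, so that $0<s\leq t$. The upper bound is then immediate: Lemma 3.5 gives $G_r(s,t)\leq\min(s,t)=s$ for every $r>0$, whence $\limsup_{r\searrow 0}G_r(s,t)\leq s$.

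The core of the argument is the matching lower bound, which is a quantitative refinement of the direct-implication reasoning in Theorem 3.6. I would fix an arbitrary $a\in\,]0,1[$ and invoke $(H_a)$: there exists $s_a>0$ such that $2\psi(u)\leq\psi(au)$ for all $u\geq s_a$. Since $s>0$, the ratio $s/r$ exceeds $s_a$ as soon as $r\leq s/s_a$, so for such $r$ one has $2\psi(s/r)\leq\psi(as/r)$. Using $\psi(t/r)\leq\psi(s/r)$ (because $t\geq s$ and $\psi$ is non-increasing) gives $\psi(s/r)+\psi(t/r)\leq 2\psi(s/r)\leq\psi(as/r)$. Applying the non-increasing map $\psi^{-1}$ reverses the inequality, yielding $\psi^{-1}(\psi(s/r)+\psi(t/r))\geq as/r$, that is, $G_r(s,t)\geq as$ for all sufficiently small $r$. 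Consequently $\liminf_{r\searrow 0}G_r(s,t)\geq as$.

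The final step is where the strengthened hypothesis pays off. Since the bound $\liminf_{r\searrow 0}G_r(s,t)\geq as$ holds for \emph{every} $a\in\,]0,1[$, I would let $a\to 1^{-}$ to obtain $\liminf_{r\searrow 0}G_r(s,t)\geq s$. Combining this with $\limsup_{r\searrow 0}G_r(s,t)\leq s$ forces the limit to exist and to equal $s=\min(s,t)$, which is the claim.

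As for the main obstacle, no individual estimate is delicate; they are essentially a rerun of Theorem 3.6 made uniform. The point demanding care is the ordering of the two limiting processes: for each fixed $a$ one must first send $r\searrow 0$ to extract $\liminf_{r\searrow 0}G_r(s,t)\geq as$, and only afterwards let $a\to 1^{-}$. This is legitimate precisely because the threshold $s_a$ depends on $a$ but not on $r$, so the lower bound $G_r(s,t)\geq as$ is uniform in $r$ over $]0,s/s_a]$; the natural pitfall to avoid is conflating the two limits by letting $a$ depend on $r$, which would not be controlled by $(H_a)$.
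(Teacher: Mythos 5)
Your proposal is correct and follows essentially the same route as the paper's own proof: the upper bound from the comparison lemma $G_r(s,t)\leq\min(s,t)$, the lower bound $G_r(s,t)\geq as$ obtained by applying $(H_a)$ once $s/r\geq s_a$ and using the monotonicity of $\psi^{-1}$, and finally letting $a\nearrow 1$ after taking $\liminf_{r\searrow 0}$. Your closing remark about the order of the two limits (and the uniformity in $r$ for fixed $a$) is exactly the point the paper's argument implicitly relies on, so nothing is missing.
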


\begin{proof}
	By Lemma 3.3, we have 
	\begin{equation*}
		\forall r>0, \quad \forall s,t \in \R, \quad G_{r}(s,t) \leq \min(s,t)
	\end{equation*}
	Thus, we have to concentrate on the lower bound of $ G_{r}$.\\
	Let $ s,t>0 $ such that $ s=min(s,t)>0.$ For each $ a \in ]0, 1[$ and when $ r $ is sufficiently small (i.e. $ s/r \geq s_{a}>0 $) we can apply the assumption $ (H_{a}) $ to get 
\begin{equation*}
\psi(s/r)+\psi(t/r) \leq 2\psi(s/r) \leq \psi(as/r).
\end{equation*}
Since $ \psi^{-1} $ is nonincreasing, we deduce 
\begin{equation*}
as/r \leq \psi^{-1}(\psi(s/r)+\psi(t/r)).
\end{equation*}
Thus, for any $ a \in ]0, 1[$ and any $ 0 < r < s/s_{a}, $ we have $ as \leq G_{r}(s,t),$ \\
Hence,
\begin{equation*}
a\min(s,t)=as \leq \lim\limits_{\substack{r \searrow 0}} \inf G_{r}(s,t) \leq \lim\limits_{\substack{r \searrow 0}} \sup G_{r}(s,t) \leq \min(s,t).
\end{equation*}

By taking $ a \nearrow 1, $ we obtain the desired result.
\end{proof}

\subsection{An approximate formulation}
In this section, we present two new  reformulations of the complementarity problem \eqref{1} by using two approximations, the first with the $ \theta^{1}_{r}$-function and the second with the  $ \theta^{2}_{r}$-function.
\subsubsection{ Approximation of NCP using $ \theta^{1}_{r}$-function}
Using $ \theta_{r}^{1} $ function, we regularize each complementarity constraint by considering 
\begin{equation*}
x_{i}z_{i}=0, ~~~ \text{by} \quad~~~ 	G_{r}^{1}(x_{i},z_{i})=\dfrac{x_{i}z_{i}-r^{2}}{x_{i}+z_{i}+2r}=0,~~~~~~~\quad \forall~  i=1,...n.
\end{equation*}
This approximation yields to we obtain the following formulation:
\begin{equation}
(\tilde{P}_{\theta_{1}})~~~~~~\left\{
\begin{array}{llllll} 
F(x)=z,  \\
x \geq 0,~~~ z \geq 0,~~~ r\searrow 0, \\
G_{r}^{1}(x,z)=0.
\end{array}
\right.
\end{equation}
Where $ G_{r}^{1}(x,z)=\dfrac{xz-r^{2}}{x+z+2r}.$ Here, it is understood that $ G_{r}^{1}$ operates componentwise on $ x$ and $z.$ We consider the family $ \{H_{\theta_{1}}^{r}(.),~r>0\}$,  where 
\begin{equation}\label{10}
H_{\theta_{1}}^{r}(x,z)=\left[
\begin{array}{llllll} 
F(x)-z\\
G_{r}^{1}(x,z)
\end{array}
\right],
\end{equation}
is a regularized function of $ H $ defined in \eqref{2}. 

\begin{lemma}
	Let $ H_{\theta_{1}}^{r}(x,z) $ define by \eqref{10}. Then, the Jacobian matrix of  $ H_{\theta_{1}}^{r}(x,z)$ is 
	\begin{equation*}
	\nabla H_{\theta_{1}}^{r}(x,z) =\left(\begin{array}{cc}
	\nabla F(x)&-I\\
	D_{a}(x,z)& D_{b}(x,z)
	\end{array}\right)
	\end{equation*}
	where  $ D_{a}(x, z)= \text{diag}\{a_{1}(x, z), ...,a_{n}(x, z) \}$ and $ D_{b}(x, z)= \text{diag}\{b_{1}(x, z), ...,b_{n}(x, z) \}$ are two diagonal matrices, given by \\
	\begin{equation*}
	a_{i}(x, z)=\left(\dfrac{z_{i}+r}{x_{i}+z_{i}+2r}\right)^{2}, \quad \quad  \quad  \quad  b_{i}(x, z)=\left(\dfrac{x_{i}+r}{x_{i}+z_{i}+2r}\right)^{2}, \quad  \quad  \quad  i=1, ...n.
	\end{equation*}

\end{lemma}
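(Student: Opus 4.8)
The plan is to exploit the block structure of $H_{\theta_{1}}^{r}$ and then reduce the computation of the lower block to a componentwise application of the quotient rule. Writing $H_{\theta_{1}}^{r}(x,z)$ as the stacked map with top block $F(x)-z$ and bottom block $G_{r}^{1}(x,z)$, the Jacobian naturally decomposes into four sub-blocks corresponding to differentiation in $x$ and in $z$. The top row is immediate: since $F(x)-z$ depends on $x$ only through $F$ and on $z$ linearly, its derivatives are $\nabla F(x)$ and $-I$ respectively, giving the first row of the claimed matrix.

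For the bottom row I would first observe that $G_{r}^{1}$ operates componentwise, so the $i$-th entry $G_{r}^{1}(x_{i},z_{i})=\dfrac{x_{i}z_{i}-r^{2}}{x_{i}+z_{i}+2r}$ depends only on the pair $(x_{i},z_{i})$. Consequently $\partial G_{r,i}^{1}/\partial x_{j}=0$ and $\partial G_{r,i}^{1}/\partial z_{j}=0$ whenever $i\neq j$, which forces both sub-blocks to be diagonal and justifies writing them as $D_{a}(x,z)$ and $D_{b}(x,z)$.

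The core step is then the single-variable quotient-rule computation of the diagonal entries. Differentiating with respect to $x_{i}$ gives
\begin{equation*}
a_{i}(x,z)=\frac{z_{i}(x_{i}+z_{i}+2r)-(x_{i}z_{i}-r^{2})}{(x_{i}+z_{i}+2r)^{2}},
\end{equation*}
and expanding the numerator yields $z_{i}^{2}+2rz_{i}+r^{2}=(z_{i}+r)^{2}$, so that $a_{i}(x,z)=\left(\dfrac{z_{i}+r}{x_{i}+z_{i}+2r}\right)^{2}$. By the symmetry of $G_{r}^{1}$ in its two arguments, the same calculation with the roles of $x_{i}$ and $z_{i}$ exchanged produces $b_{i}(x,z)=\left(\dfrac{x_{i}+r}{x_{i}+z_{i}+2r}\right)^{2}$. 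Assembling the four blocks gives exactly the asserted form of $\nabla H_{\theta_{1}}^{r}(x,z)$.

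There is no genuine obstacle here; the only point requiring care is recognizing that the numerator of each partial derivative collapses to a perfect square, which is what makes the final expressions clean. I would also note in passing that for nonnegative $x,z$ and $r>0$ the denominator $x_{i}+z_{i}+2r$ never vanishes, so the Jacobian is well defined everywhere on the relevant domain.
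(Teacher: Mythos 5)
Your proof is correct: the paper states this lemma without proof, treating it as a direct computation, and your blockwise decomposition followed by the componentwise quotient rule (with the numerator collapsing to $(z_{i}+r)^{2}$, resp. $(x_{i}+r)^{2}$) is exactly the intended argument. Your closing remark that the denominator $x_{i}+z_{i}+2r$ never vanishes for $x,z\geq 0$ and $r>0$ is a worthwhile addition the paper leaves implicit.
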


\subsubsection{ Approximation of NCP using $ \theta^{2}_{r}$-function}
Using the $ \theta^{2}_{r}$-function defined above, we obtain an approximate formulation for NCP 
\begin{equation}
(\tilde{P}_{\theta_{2}})~~~~~~\left\{
\begin{array}{llllll}  
F(x)=z,  \\
x \geq 0,~~~ z \geq 0,~~~ r\searrow 0 \\
G_{r}^{2}(x,z)=0.
\end{array}
\right.
\end{equation}
Where $ G_{r}^{2}(x,z)=-r\log(e^{-x/r}+e^{-z/r}) $, by the same way as for \eqref{10}, $ G_{r}^{2} $  operates componentwise on $ x $ and $ z $. We consider the family $ \{H_{\theta_{2}}^{r}(.),~r>0\},$ where 
\begin{equation}\label{11}
H_{\theta_{2}}^{r}(x,z)=\left[
\begin{array}{llllll} 
F(x)-z\\
G_{r}^{2}(x,z)
\end{array}
\right],
\end{equation}	
is a regularized function of $ H$ defined in \eqref{2}.

\begin{lemma}
	Let  $ H_{\theta_{2}}^{r}(x,z) $ define by \eqref{11}. Then, the Jacobian matrix of   $  H_{\theta_{2}}^{r}(x,z)$ is	
\begin{equation*}
\nabla H_{\theta_{2}}^{r}(x,z)=\left(\begin{array}{cc}
 \nabla F(x)&-I\\
Q_{k}(x, z)& Q_{l}(x, z)
\end{array}\right),
\end{equation*}
where $ Q_{k}(x, z)= \text{diag}\{k_{1}(x, z), ...,k_{n}(x, z) \}$ and $ Q_{l}(x, z)= \text{diag}\{l_{1}(x, z), ...,l_{n}(x, z) \}$  are two diagonal matrices, given by
\begin{equation*}
k_{i}(x,z)=\dfrac{e^{-x_{i}/r}}{e^{-x_{i}/r}+e^{-z_{i}/r}},\quad \quad  \quad  \quad  l_{i}(x,z)=\dfrac{e^{-z_{i}/r}}{e^{-x_{i}/r}+e^{-z_{i}/r}}, \quad  \quad  \quad  i=1, ...n.
\end{equation*}
\end{lemma}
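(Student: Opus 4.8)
The plan is to differentiate $H_{\theta_2}^r$ block by block, using the fact that it is a map $\R^{2n}\to\R^{2n}$ that splits naturally into an upper block $F(x)-z$ and a lower block $G_r^2(x,z)$, each of length $n$. Differentiating with respect to the full variable $(x,z)$ then produces a $2\times 2$ array of $n\times n$ blocks, and it suffices to identify each of the four blocks separately.

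For the upper block $(x,z)\mapsto F(x)-z$, the partial derivative with respect to $x$ is exactly the Jacobian $\nabla F(x)$, and the partial derivative with respect to $z$ is $-I$, since $z$ enters linearly. This immediately gives the first row $\bigl(\nabla F(x)\ \ -I\bigr)$ of the claimed matrix.

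For the lower block, the essential observation is that $G_r^2$ operates componentwise, so its $i$-th component $g_i(x,z)=-r\log\!\bigl(e^{-x_i/r}+e^{-z_i/r}\bigr)$ depends only on $x_i$ and $z_i$. Consequently $\partial g_i/\partial x_j=\partial g_i/\partial z_j=0$ whenever $j\neq i$, which forces both lower blocks to be diagonal. The diagonal entries are then obtained by a direct chain-rule computation: differentiating $g_i$ in $x_i$ gives
\begin{equation*}
\frac{\partial g_i}{\partial x_i}
= -r\cdot\frac{1}{e^{-x_i/r}+e^{-z_i/r}}\cdot\Bigl(-\tfrac{1}{r}e^{-x_i/r}\Bigr)
= \frac{e^{-x_i/r}}{e^{-x_i/r}+e^{-z_i/r}}
= k_i(x,z),
\end{equation*}
and symmetrically $\partial g_i/\partial z_i=l_i(x,z)$. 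Assembling these diagonal entries yields the lower row $\bigl(Q_k(x,z)\ \ Q_l(x,z)\bigr)$, completing the identification of the Jacobian.

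The argument involves no genuine obstacle: the only substantive step is the elementary differentiation of the log-sum-exp term above, and the block and diagonal structure follow purely from linearity and componentwise action. For completeness one may note that the denominator $e^{-x_i/r}+e^{-z_i/r}$ never vanishes, so $g_i$ is everywhere smooth and $k_i,l_i$ are well defined for all $(x,z)\in\R^{2n}$ and $r>0$; one also sees immediately that $k_i+l_i=1$, which records at the level of the Jacobian the smoothing of the complementarity relation encoded by $G_r^2$.
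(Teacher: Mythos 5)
Your computation is correct: the paper states this lemma without proof, treating it as a routine differentiation, and your block-by-block chain-rule verification is exactly the implicit argument the paper relies on. The added remarks (nonvanishing denominator, $k_i+l_i=1$) are accurate and consistent with how the paper later uses these entries as nonnegative diagonal matrices.
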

In order to prove the nonsingular of the Jacobian matrix of $ H_{\theta_{1}}^{r}(x,z) $ (resp. $ H_{\theta_{2}}^{r}(x,z) $) , we state
first a basic but essential lemma.
\begin{lemma}
	Let $ M \in \R^{n \times n}$ be a $ \mathcal{P}_{0}-$matrix. Then any matrix in the following form is nonsingular:
	\begin{equation*}
	N_{s}+N_{t}M,
	\end{equation*}
	where $ N_{s}\in \R^{n \times n}$ is a positive (negative) diagonal matrix, and $ N_{t}\in \R^{n \times n}$ is a nonnegative (non-positive) diagonal matrix.
\end{lemma}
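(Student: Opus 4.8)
The plan is to show that the homogeneous system $(N_s + N_t M)x = 0$ forces $x = 0$. I would first invoke the standard characterization of $\mathcal{P}_0$-matrices, which is the matrix analogue of the $\mathcal{P}_0$-function definition recalled in Section 2: a matrix $M$ is a $\mathcal{P}_0$-matrix if and only if for every nonzero $x \in \R^n$ there exists an index $i_0$ with $x_{i_0} \neq 0$ and $x_{i_0}(Mx)_{i_0} \geq 0$. The decisive feature of this statement is that the favourable index $i_0$ can always be chosen among the components where $x$ does not vanish.

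I would treat first the case where $N_s = \text{diag}(s_1, \ldots, s_n)$ has positive entries $s_i > 0$ and $N_t = \text{diag}(t_1, \ldots, t_n)$ has nonnegative entries $t_i \geq 0$. Arguing by contradiction, suppose there is $x \neq 0$ with $(N_s + N_t M)x = 0$. Reading this equality componentwise gives $s_i x_i + t_i (Mx)_i = 0$ for every $i$. Picking the $\mathcal{P}_0$-index $i_0$ associated with $x$, so that $x_{i_0} \neq 0$ and $x_{i_0}(Mx)_{i_0} \geq 0$, and multiplying the $i_0$-th scalar equation by $x_{i_0}$ yields
\[
s_{i_0}\, x_{i_0}^2 + t_{i_0}\, x_{i_0}(Mx)_{i_0} = 0.
\]
Here the first term is strictly positive because $s_{i_0} > 0$ and $x_{i_0} \neq 0$, while the second is nonnegative because $t_{i_0} \geq 0$ and $x_{i_0}(Mx)_{i_0} \geq 0$. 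The left-hand side is therefore strictly positive, contradicting its being zero. Hence $x = 0$ and $N_s + N_t M$ is nonsingular. The sign-reversed case, where $N_s$ is negative diagonal and $N_t$ is non-positive diagonal, follows at once by writing $N_s + N_t M = -\big((-N_s) + (-N_t)M\big)$, which exhibits it, up to the harmless factor $-1$, as a matrix of the first type, since $-N_s$ is then positive diagonal and $-N_t$ nonnegative diagonal.

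I expect the only genuinely delicate point to be the use of the $\mathcal{P}_0$-characterization together with the guarantee $x_{i_0} \neq 0$: if one only knew that $x_{i_0}(Mx)_{i_0} \geq 0$ for some index without controlling whether $x_{i_0}$ vanishes, the term $s_{i_0} x_{i_0}^2$ could degenerate to zero and the strict-positivity argument would collapse. Everything else reduces to a one-line sign bookkeeping. If the matrix form of the characterization is not taken as known, I would recover it from the $\mathcal{P}_0$-function definition of Section 2 applied to the affine map $F(x) = Mx$, for which the two notions coincide.
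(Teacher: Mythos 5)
Your proof is correct and follows essentially the same route as the paper's: both reduce nonsingularity to showing the homogeneous system has only the trivial solution, and both derive a sign contradiction from the componentwise equation $s_i x_i + t_i(Mx)_i = 0$ via the $\mathcal{P}_0$-characterization guaranteeing an index $i_0$ with $x_{i_0}\neq 0$ and $x_{i_0}(Mx)_{i_0}\geq 0$. Your write-up is in fact slightly tighter than the paper's (which argues loosely over cases $t_i=0$ versus $v_i\neq 0$) and, unlike the paper, you explicitly dispatch the negative/non-positive case by factoring out $-1$.
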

\begin{proof}
	Let $ N_{s}=\text{diag}(s_{1}, s_{2}, ..., s_{n}) $ and $ N_{t}=\text{diag}(t_{1}, t_{2}, ..., t_{n})$. If $ N_{s}$ is positive, and $ N_{t}$ is nonnegative, then $ s_{i}>0 $ and $ t_{i}\geq 0 $ for all $ i=1, 2, ..., n.$\\
	Let $ v \in \R^{n} $ be a vector such that $ (N_{s}+N_{t}M)v=0.$ Then, we have $ v_{i}=-\dfrac{t_{i}}{s_{i}}(Mv)_{i}.$ \\
	It yields $ v_{i}^{2}=-\dfrac{t_{i}}{s_{i}}v_{i}(Mv)_{i}.$ If $ t_{i}=0,$ then $ v_{i}=0,~~ \forall i=1, ..., n.$ \\
	If $ v_{i} \ne 0,$ we have $\frac{t_{i}}{s_{i}} >0.$ Owing to $ v_{i}^{2} \geq 0,$ we have $ v_{i}(Mv)_{i} \leq 0.$ If $ v_{i}(Mv)_{i} = 0,$ then $ v_{i}=0.$ Otherwise, $ v_{i}(Mv)_{i} < 0 $ contradicts the  property of $ M $. Based on the above discussion, it is concluded that $ v=0,$ then $N_{s}+N_{t}M $ is a nonsingular matrix.
\end{proof}
By Lemma 3.10, we can obtain a property of ${H}_{\theta_{1}}^{r}$ and  and ${H}_{\theta_{2}}^{r}$  if $ F $ is a $\mathcal{P}_{0}$-matrix.
\begin{theorem} 
	Let $ F$ be a $\mathcal{P}_{0}$-function. Then, for any $ r>0,$ and $ (x, z)\in \R^{2n} $  the Jacobian matrix $ \nabla {H}_{\theta_{1}}^{r}(x,z)  $ (resp. $\nabla {H}_{\theta_{2}}^{r}(x,z)$)  is nonsingular. 
\end{theorem}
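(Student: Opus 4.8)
The plan is to reduce the nonsingularity of the $2n\times 2n$ block Jacobian to the nonsingularity of an $n\times n$ matrix of exactly the form handled by Lemma 3.10. I would work directly with the homogeneous system: suppose a vector $(u,v)\in\R^{2n}$ satisfies $\nabla H_{\theta_{1}}^{r}(x,z)(u,v)^{T}=0$. Writing out the two block rows from the explicit Jacobian computed above gives the pair of equations
\begin{equation*}
\nabla F(x)\,u - v = 0, \qquad D_{a}(x,z)\,u + D_{b}(x,z)\,v = 0.
\end{equation*}
The first equation lets me eliminate $v$, namely $v=\nabla F(x)\,u$, and substituting into the second reduces everything to the single $n\times n$ system $\bigl(D_{a}(x,z)+D_{b}(x,z)\nabla F(x)\bigr)u=0$.

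Next I would verify the sign structure of the diagonal factors so that Lemma 3.10 applies. For every $r>0$ and every relevant $(x,z)$ (with $x,z\geq 0$, so that the denominators do not vanish) the entries $a_{i}=\bigl((z_{i}+r)/(x_{i}+z_{i}+2r)\bigr)^{2}$ are strictly positive and $b_{i}=\bigl((x_{i}+r)/(x_{i}+z_{i}+2r)\bigr)^{2}$ are strictly positive; thus $D_{a}$ is a positive diagonal matrix and $D_{b}$ is a nonnegative (indeed positive) diagonal matrix. Since $F$ is a $\mathcal{P}_{0}$-function, its Jacobian $\nabla F(x)$ is a $\mathcal{P}_{0}$-matrix at every point, so Lemma 3.10 applies with $N_{s}=D_{a}$, $N_{t}=D_{b}$ and $M=\nabla F(x)$ and yields that $D_{a}+D_{b}\nabla F(x)$ is nonsingular. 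Hence $u=0$, and then $v=\nabla F(x)\,u=0$, so the homogeneous system has only the trivial solution and $\nabla H_{\theta_{1}}^{r}(x,z)$ is nonsingular.

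For the $\theta_{2}$ case the argument is identical after replacing the pair $(D_{a},D_{b})$ by $(Q_{k},Q_{l})$. Here the entries $k_{i}=e^{-x_{i}/r}/(e^{-x_{i}/r}+e^{-z_{i}/r})$ and $l_{i}=e^{-z_{i}/r}/(e^{-x_{i}/r}+e^{-z_{i}/r})$ are strictly positive for all real $x_{i},z_{i}$, so $Q_{k}$ is positive diagonal and $Q_{l}$ is nonnegative diagonal. The same elimination reduces the problem to the nonsingularity of $Q_{k}+Q_{l}\nabla F(x)$, which is again covered by Lemma 3.10.

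I expect the only genuinely nontrivial point to be the passage from \emph{$F$ is a $\mathcal{P}_{0}$-function} to \emph{$\nabla F(x)$ is a $\mathcal{P}_{0}$-matrix at every point}; this rests on the differentiability of $F$ together with the standard equivalence between these two notions, and it is what licenses the use of Lemma 3.10. Everything else is the block elimination described above plus a routine verification that the diagonal entries have the required signs, after which the conclusion is immediate.
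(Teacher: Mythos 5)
Your proof is correct and follows essentially the same route as the paper: both reduce the nonsingularity of the block Jacobian to that of the $n\times n$ matrix $D_{a}+D_{b}\nabla F(x)$ (resp. $Q_{k}+Q_{l}\nabla F(x)$) and invoke Lemma 3.10 together with the fact that a continuously differentiable $\mathcal{P}_{0}$-function has a $\mathcal{P}_{0}$ Jacobian. The only differences are minor and in your favor: you obtain the reduction by block elimination on the homogeneous system (which sidesteps the invertibility of $D_{b}$ needed for the paper's determinant identity $\det \nabla H_{\theta_{1}}^{r}=\det\left(D_{a}+\nabla F(x)D_{b}\right)$), and you check the strict positivity of $D_{a}$ on $x,z\geq 0$ and of $Q_{k}$ everywhere --- the hypothesis Lemma 3.10 actually requires --- whereas the paper's proof only asserts nonnegativity of both diagonal factors.
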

\begin{proof}
	For all $ r>0$ and from Lemma 3.8 and Lemma 3.9, it follows that the diagonal matrix $ D_{a}(x,z)$ (resp. $ Q_{k}(x,z)$) is non-negative, and  $ D_{b}(x,z)$ (resp. $ Q_{l}(x,z)$)  is non-negative diagonal matrix. \\
	Since $ F $ is a $\mathcal{P}_{0}$ function, the Jacobian matrix $ \nabla F(x) $ is a $\mathcal{P}_{0}$ matrix. \\
	We have 
	\begin{equation*}
	\text{det}(\nabla{H}_{\theta_{1}}^{r}(x,z))=\text{det}( D_{a}(x,z)+\nabla F(x)D_{b}(x,z)),
	\end{equation*}
	and 
	\begin{equation*}
\text{det}( \nabla {H}_{\theta_{2}}^{r}(x,z))=\text{det}( Q_{k}(x,z)+\nabla F(x)Q_{l}(x,z)).
	\end{equation*}
	Since $ \nabla F(x) $ is a $\mathcal{P}_{0}$-matrix and from Lemma 3.10, it follows that $ D_{a}(x,z)+ \nabla F(x)D_{b}(x,z)$ (resp. $ Q_{k}(x,z)+\nabla F(x)Q_{l}(x,z)$) is nonsingular. Hence $ \nabla {H}_{\theta_{1}}^{r}(x,z) $ (resp. $  \nabla {H}_{\theta_{2}}^{r} $) is nonsingular. 
\end{proof}

\section{New approach for solving nonlinear complementarity problems}
In this section, we present the idea of our algorithms for optimization problems to solve the NCP, but here we don't have any objective function to minimize. Our methods take inspiration from Interior Point Methods. We restrict our choice of $ \theta$-fucntion to $ \theta_{r}(x)=\theta^{1}_{r}(x)=\frac{x}{x+r}$. \\
We recall that the interior-point methods have replaced the original nonsmooth problem NCP by a sequence of regularized problems
\begin{equation} \label{12345543211234}
H_{r}(\mathbf{X})=0,
\end{equation}
where 
\begin{equation}
\mathbf{X}=\left [
\begin{array}{llllll} 
x \\
z
\end{array}\right ] \in \R^{2n}_{+}, \quad H_{r}(\mathbf{X})=\left [
\begin{array}{llllll} 
F(x)-z \\
x.z-r\mathbf{1}
\end{array}\right ],
\end{equation}
where $ r \geq 0 $ is the smoothing parameter, $\mathbf{1}  \in \R^{n}$ is the vector whose components are all equal to 1.
The Jacobian matrix of $ H_{r} $ with respect to $\mathbf{X}$, does not depend on $ r $ and can be denoted by   
\begin{equation} \label{0854356}
\nabla H_{r}(\mathbf{X})=\left(\begin{array}{cc}
\nabla F(x)&-I\\
Z& X
\end{array}\right),
\end{equation}
where $ Z=\text{diag}(z) $ and  $ X=\text{diag}(x)$, i.e. the diagonal matrix of $ z $ (resp. $ x $).
\subsection{When the parameter becomes a variable}
In the system \eqref{12345543211234}, the status of the parameter $ r $ is very distinct from that of the variable $ \mathbf{X}$. While $ \mathbf{X}$ is computed "automatically" by a Newton iteration, $ r$ has to be updated "manually" in an ad-hoc manner.  \\
Our goal is to find a strategy that decreases $ r $ during iterations and ensures the nonnegative of variables. However, we must adjust the strategy when the model or its parameters are changed. To avoid this trouble, we consider $ r $ as an unknown of the system instead of a parameter. \\ We feel that it would be judicious to incorporate the parameter $ r$ into the variables.
Let us therefore consider the enlarged vector of unknowns 
\begin{equation}
\mathbb{X}= \left [\begin{array}{llllll} 
\mathbf{X} \\
r
\end{array}\right] \in \R^{2n} \times \R_{+},
\end{equation}
and then consider a system  of $ 2n+1 $ equations
\begin{equation}
\mathbb{H}_{\theta}(\mathbb{X})=0,
\end{equation}
to be on $\mathbb{X}$. To this end, let us remind ourselves that our ultimate goal is to solve $ H_{\theta_{1}}^{0}(\mathbf{X})$ , together with the inequalities $ x\geq 0,~z\geq 0. $ \\
Thus, it is really natural to first consider 
\begin{equation} \label{8893}
\mathbb{H}_{\theta}(\mathbb{X})=\left[\begin{array}{llllll} 
H_{\theta_{1}}^{r}(\mathbf{X})\\
\quad r
\end{array}
\right].
\end{equation}
where
\begin{equation*}
H_{\theta_{1}}^{r}(\mathbf{X})=\left[
\begin{array}{llllll} 
F(x)-z\\
G_{r}^{1}(\mathbf{X})
\end{array}
\right], \quad   \text{and} ~ \quad G_{r}^{1}(\mathbf{X})=\dfrac{xz-r^{2}}{x+z+2r}.
\end{equation*}
This construction turns out to be to naive. Indeed, if we start from some $ r^{0} $ and solve the smooth system \eqref{8893} by the smooth Newton method, since the last equation is linear, we end up with $ r^{1}=0$ at the first iteration. Once the boundary of the interior region is reached, we are "stuck" there.\\
To prevent $ r $ from rushing to zero in just one iteration, we could set   
\begin{equation} \label{8890}
\mathbb{H}_{\theta}(\mathbb{X})=\left[\begin{array}{llllll} 
H_{\theta_{1}}^{r}(\mathbf{X})\\
\quad r^{2}
\end{array}
\right].
\end{equation}
At this stage, system \eqref{8890} is not yet fully adequate. Indeed, the last equation is totally decoupled from the others. Everything happens as if $ r $ follows a prefixed sequence, generated by the Newton iterates of the scalar equation $ r^{2}=0$, regardless of $ \mathbf{X} $. It is desirable to couple $ r $ and $\mathbf{X}$ in a tighter way. In this respect, we advocate
\begin{equation} \label{86689}
\mathbb{H}_{\theta}(\mathbb{X})=\left[\begin{array}{llllll} 
H_{\theta_{1}}^{r}(\mathbf{X})\\
\frac{1}{2} \Vert x^{-}\Vert^{2}+\frac{1}{2} \Vert z^{-}\Vert^{2}+ r^{2}
\end{array}
\right],
\end{equation}
where
\begin{equation*}
\Vert x^{-}\Vert^{2}=\sum_{i=1}^{n} \text{min}^{2}(x_{i},0), \quad \Vert z^{-}\Vert^{2}=\sum_{i=1}^{n} \text{min}^{2}(z_{i},0).
\end{equation*}
This choice has the benefit of taking into account the nonnegativity condition $ x \geq 0 $ and $ z \geq 0 $. \\ Indeed,
the last equation of \eqref{86689} implies that, as long as $ r\geq 0 $, we are ascertained that $ x^{-}=z^{-}=0 $.
This amounts to saying that $ x \geq 0 $ and $ z \geq 0 $. Should a component of $ x $ or $ z $ become negative during the iteration, this equation would contribute to “penalize” it.\\
Since $ r $ is now considered as a variable and  the scalar function $ t \mapsto \frac{1}{2}  \vert \min(t,0)\vert^{2} $ is differentiable and its derivative is equal to $ \min(t,0) $. From this observation, the Jacobian matrix of $\mathbb{H}_{\theta} $ is:   
\begin{equation}
\nabla_{\mathbb{X}}\mathbb{H}_{\theta}(\mathbb{X})=
\begin{pmatrix}
\nabla_{x}H_{\theta_{1}}^{r} & \nabla_{z}H_{\theta_{1}}^{r} & \partial_{r}H_{\theta_{1}}^{r} \\
\\
(x^{-})^{\text{T}}&(z^{-})^{\text{T}}& 2r  \\
\end{pmatrix},
\end{equation}
where $  x^{-}  $ is the vector of components $ x_{i}^{-}=\min(x_{i},0) $ and similarly for $ z^{-},$
\begin{equation*}
\nabla_{x}H_{\theta_{1}}^{r}=\left[\begin{array}{llllll} 
\nabla _{x} F(x)\\
D_{a}(x,z)
\end{array}
\right]_{2n \times n}, \quad \nabla_{z}H_{\theta_{1}}^{r}=\left[\begin{array}{llllll} 
-I\\
D_{b}(x,z)
\end{array}
\right]_{2n \times n}, \quad   \partial_{r}H_{\theta_{1}}^{r}=\left[\begin{array}{llllll} 
\quad \quad  \quad   0_{n \times 1}\\
\text{diag} \left(   \dfrac{-2r}{x+z+2r}+\dfrac{2(r^{2}-xz)}{(x+z+2r)^{2}}\right)  \mathbf{e}
\end{array}
\right]_{2n\times 1}
\end{equation*}
and $ \mathbf{e}  $ is a n-dimensional vector whose entries are equal to $ 1 $.\\
If $ \mathbb{H}_{\theta}(\mathbb{X})=0$ where $  \mathbb{X} \in \R^{2n}_{+} \times \R_{+}$ we obtain $ r=0$ and $ x^{-}=z^{-}=0$. Hence in this case, $ \nabla_{\mathbb{X}}\mathbb{H}_{\theta}(\mathbb{X})$ becomes singular, since $ \text{det}( \nabla_{\mathbb{X}}\mathbb{H}_{\theta}(\mathbb{X}))=0 $. To solve this issue, we add a small enough positive parameter $ \varepsilon $ in the last equation. We get 
\begin{equation} \label{123456}
\frac{1}{2} \Vert x^{-}\Vert^{2}+ \frac{1}{2} \Vert z^{-}\Vert^{2}+r^{2}+\varepsilon r=0.
\end{equation} 
Hence, we define the following systems
\begin{equation} \label{889}
\mathbb{H}_{\theta}(\mathbb{X})=\left[\begin{array}{llllll} 
H_{\theta_{1}}^{r}(\mathbf{X})\\
\frac{1}{2} \Vert x^{-}\Vert^{2}+\frac{1}{2} \Vert z^{-}\Vert^{2}+ r^{2}+\varepsilon r
\end{array}
\right]
\end{equation}
\begin{lemma}
Let $ \mathbf{X} \in \bar{\Xi}$, where $ \Xi $ is the interior region defined in 

\begin{equation}
	\Xi=\{\mathbf{X}=(x,z)\in \R^{2n}\quad |\quad x>0, ~~z>0\}.
\end{equation}
Let $ r\in \R $ and $ \mathbb{X}=[\mathbf{X};r]^{T}.$ Then,
\begin{equation*}
\text{det}~ \nabla \mathbb{H}_{\theta}(\mathbb{X})=(\varepsilon+2r)~\text{det}~ \nabla H_{\theta_{1}}^{r}(\mathbf{X}).
\end{equation*}
If $ r>-\frac{\varepsilon}{2},$ the two Jacobian matrices are singular or nonsigular at the same time.
\end{lemma}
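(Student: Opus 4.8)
The plan is to exploit the block structure of the augmented Jacobian $\nabla_{\mathbb X}\mathbb H_\theta$ together with the fact that on $\bar\Xi$ the nonnegativity of $x$ and $z$ forces the bottom-left blocks to vanish. First I would record that for $\mathbf X=(x,z)\in\bar\Xi$ one has $x\ge 0$ and $z\ge 0$ componentwise, so that $x_i^-=\min(x_i,0)=0$ and $z_i^-=\min(z_i,0)=0$ for every $i$; consequently the row vectors $(x^-)^{\mathrm T}$ and $(z^-)^{\mathrm T}$ appearing in the last row of $\nabla_{\mathbb X}\mathbb H_\theta$ are identically zero. I would also note that, after the addition of the term $\varepsilon r$ in \eqref{889}, the partial derivative of the last scalar equation with respect to $r$ equals $2r+\varepsilon$, so the bottom-right entry of the augmented Jacobian is $2r+\varepsilon$.

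Substituting these observations into the expression for $\nabla_{\mathbb X}\mathbb H_\theta(\mathbb X)$ yields a matrix of the block form
$$
\nabla \mathbb H_\theta(\mathbb X)=
\begin{pmatrix}
\nabla H_{\theta_1}^{r}(\mathbf X) & \partial_r H_{\theta_1}^{r}\\
0 & 2r+\varepsilon
\end{pmatrix},
$$
where the top-left $2n\times 2n$ block is exactly the Jacobian $\nabla H_{\theta_1}^{r}(\mathbf X)$ of Lemma 3.8 (the concatenation of the columns $\nabla_x H_{\theta_1}^{r}$ and $\nabla_z H_{\theta_1}^{r}$), the top-right block $\partial_r H_{\theta_1}^{r}$ is the $2n\times 1$ column displayed just before the statement, and the bottom-left block is the $1\times 2n$ zero row. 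Because this matrix is block upper triangular, its determinant is the product of the determinants of its diagonal blocks. Equivalently, a cofactor expansion of $\det\nabla\mathbb H_\theta(\mathbb X)$ along the last row leaves only the contribution of the entry $2r+\varepsilon$, whose complementary minor is precisely $\det\nabla H_{\theta_1}^{r}(\mathbf X)$. Either way one obtains
$$
\det \nabla \mathbb H_\theta(\mathbb X)=(\varepsilon+2r)\,\det \nabla H_{\theta_1}^{r}(\mathbf X).
$$

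For the equivalence of singularity I would simply invoke this factorization. When $r>-\tfrac{\varepsilon}{2}$ the scalar factor satisfies $\varepsilon+2r>0$, hence it is nonzero, and a product of reals vanishes if and only if one of its factors does. Therefore $\det\nabla\mathbb H_\theta(\mathbb X)=0$ holds exactly when $\det\nabla H_{\theta_1}^{r}(\mathbf X)=0$, so the two Jacobian matrices are singular, or nonsingular, simultaneously. There is no genuine obstacle in this argument; the only point requiring care is the vanishing of $x^-$ and $z^-$, which relies on working on the closed region $\bar\Xi$ rather than on all of $\R^{2n}$, since off $\bar\Xi$ the lower-left blocks would no longer be zero and the clean factorization would fail.
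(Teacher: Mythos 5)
Your proposal is correct and follows essentially the same route as the paper: on $\bar{\Xi}$ the vanishing of $x^{-}$ and $z^{-}$ reduces the last row of $\nabla_{\mathbb{X}}\mathbb{H}_{\theta}(\mathbb{X})$ to $(0,\dots,0,\,2r+\varepsilon)$, and expanding the determinant along that row gives the factorization $(\varepsilon+2r)\det\nabla H_{\theta_{1}}^{r}(\mathbf{X})$. Your added remark that $\varepsilon+2r>0$ when $r>-\frac{\varepsilon}{2}$, so the two determinants vanish simultaneously, is the same (immediate) conclusion the paper draws.
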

\begin{proof}
Thanks to the assumption $ \mathbf{X} \in \bar{\Xi}, $ we have $ x\geq 0 $ and $ z\geq 0, $ so that $ x^{-}=z^{-}=0. $ Expanding the determinant of \eqref{889} with respect to the last row yields the desired result.
\end{proof}
\section{Convergence}
In this section, we propose a generic algorithm to solve NCP and prove some convergence results.\\
From now on, the enlarged equation $\eqref{889}$ is selected as the reference system in the design of our new algorithm. The idea is simply to apply the standard Newton method to the smooth system $\eqref{889}$. To enforce a global convergence behavior, we also recommend using $ \alpha$ line search like Armijo back-tracking technique.\\ 
Now, we present our algorithm for our method described above:\vspace{0.5cm}\\
\begin{tabular}{llll}
	\rule{1\linewidth}{0.6mm}  \\
	$ \mathbf{Algorithm~1}$ Nonparametric method with Armijo line search \\
	\rule{1\linewidth}{0.6mm}
	\\
	1.~ Chose  $\mathbb{X}^{0} =(\mathbf{X}^{0},r^{0}), ~\mathbf{X}^{0} \in \Xi,~r^{0}=<x^{0},z^{0}>/n,~\tau \in (0,1/2), ~\varrho \in(0,1).~ $ Set  $k=0. $ \\
	2.~ If $ \mathbb{H}_{\theta}(\mathbb{X}^{k})=0,~$stop.\\
	3.~ Find a direction $ \mathbf{d}^{k} \in \R^{2n+1}$ such that\vspace{0.3cm} \\
	\vspace{0.3cm}
	$~~~~~~~~~~~~~~~~\quad\quad\quad\quad\quad\quad\quad\quad\quad\quad\quad\quad\quad\quad\quad~~\quad~~~~~~~~~~\mathbb{H}_{\theta}(\mathbb{X}^{k})+\nabla_{\mathbb{X}}\mathbb{H}_{\theta}(\mathbb{X}^{k})\mathbf{d}^{k}=0.  $\\
	4.~ Choose $ \zeta^{k}=\varrho^{j_{k}}\in (0,1),~ $ where  $ j_{k} \in \N $ is the smallest integer such that \vspace{0.3cm}\\
	\vspace{0.3cm}
	
	$~~~~~~~~~~~~~~~~\quad\quad\quad\quad\quad\quad\quad\quad\quad\quad\quad\quad\quad\quad~~\quad\quad~~~~~~~\Theta(\mathbb{X}^{k}+\varrho^{j_{k}}\mathbf{d}^{k})-\Theta(\mathbb{X}^{k})\leq \tau\varrho^{j_{k}}~ \nabla \Theta(\mathbb{X}^{k})^{T}\mathbf{d}^{k}.  $\\
	5. ~Set $ \mathbb{X}^{k+1}=\mathbb{X}^{k}+\zeta^{k}\mathbf{d}^{k}~ $ and $ k\gets k+1.~ $Go to step $ 2.$ \\
	\\
	\rule{1\linewidth}{0.6mm}
	
\end{tabular}

Where the merit function used in the line search is:
\begin{equation*}
\Theta(\mathbb{X})=\frac{1}{2}\Vert \mathbb{H}_{\theta}(\mathbb{X}) \Vert^{2}.
\end{equation*}

A detailed description of Nonparametric method  is given in Algorithm 1. A few comments are in order:
\begin{itemize}
	\item The initial point $ \mathbb{X}^{0} =(\mathbf{X}^{0},r^{0}) $ must be an interior point, namely,  $ \mathbf{X}^{0}>0 $ and the initial parameter\\ $ r^{0}=<x^{0},z^{0}>/n$ has the correct order of magnitude.
	\item If $ \mathbf{X}^{k} \in \Xi, $ then $ (x^{k})^{-}=(z^{k})^{-}=0 $ and
	\begin{equation*} 
	\mathbf{d}^{k}=\left [
	\begin{array}{llllll} 
	d\mathbf{X}^{k} \\
	dr^{k}
	\end{array}
	\right ]=-\begin{pmatrix}
	\nabla_{x}H_{\theta_{1}}^{r} & \nabla_{z}H_{\theta_{1}}^{r} & \partial_{r}H_{\theta_{1}}^{r} \\
	\\
	0^{\text{T}}&0^{\text{T}}& \varepsilon+2r^{k}\\
	\end{pmatrix}^{-1}\left [
	\begin{array}{llllll} 
	H^{r}_{\theta_{1}}(\mathbf{X}^{k})\\
	\varepsilon  r^{k}+(r^{k})^{2}
	\end{array}
	\right ],
	\end{equation*}
	provided that the Jacobian matrix is invertible. The increment for the parameter is then
	\begin{equation*}
	dr^{k}=-\dfrac{\varepsilon r^{k}+(r^{k})^{2}}{\varepsilon+2r^{k}}.
	\end{equation*}	
	\item There is no need to truncate  the Newton direction $ \mathbf{d}^{k}$ to preserve positivity for $ x^{k+1}$ and $ z^{k+1},$ since nonnegativity is "guaranteed" at convergence. However, if we wish all the iterates are nonnegative, then we are free to carry out an additional damping after Step 4 (Armijo's line search).
\end{itemize}

\begin{proposition}
	Let  $ F $ be a continuous differentiable  $ \mathcal{P}_{0}$-function. Then, step 3 in Algorithm 1 is well-defined.
\end{proposition}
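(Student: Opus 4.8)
The statement that step 3 is \emph{well defined} means precisely that the linear Newton system
\[
\mathbb{H}_{\theta}(\mathbb{X}^{k})+\nabla_{\mathbb{X}}\mathbb{H}_{\theta}(\mathbb{X}^{k})\,\mathbf{d}^{k}=0
\]
admits a (unique) solution $\mathbf{d}^{k}$, i.e. that the Jacobian $\nabla_{\mathbb{X}}\mathbb{H}_{\theta}(\mathbb{X}^{k})$ is nonsingular at every iterate. My plan is therefore to establish this nonsingularity by combining the determinant factorization of Lemma 4.1 with the nonsingularity result of Theorem 3.11. Two ingredients must be in force at each $\mathbb{X}^{k}=(\mathbf{X}^{k},r^{k})$: first, the feasibility $\mathbf{X}^{k}\in\bar{\Xi}$, so that $(x^{k})^{-}=(z^{k})^{-}=0$ and the hypotheses of Lemma 4.1 are met; and second, $r^{k}>0$, so that the scalar factor $\varepsilon+2r^{k}$ is strictly positive.

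Granting these two invariants, the core argument is immediate. Since $F$ is a continuously differentiable $\mathcal{P}_{0}$-function, its Jacobian $\nabla F(x^{k})$ is a $\mathcal{P}_{0}$-matrix, and because $r^{k}>0$ together with $x^{k},z^{k}\geq 0$ the diagonal blocks $D_{a}(\mathbf{X}^{k})$ and $D_{b}(\mathbf{X}^{k})$ have entries $\bigl((z_{i}+r)/(x_{i}+z_{i}+2r)\bigr)^{2}>0$ and $\bigl((x_{i}+r)/(x_{i}+z_{i}+2r)\bigr)^{2}>0$, hence are strictly positive; Theorem 3.11 then yields $\det\nabla H_{\theta_{1}}^{r^{k}}(\mathbf{X}^{k})\neq 0$. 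Invoking Lemma 4.1,
\[
\det\nabla_{\mathbb{X}}\mathbb{H}_{\theta}(\mathbb{X}^{k})=(\varepsilon+2r^{k})\,\det\nabla H_{\theta_{1}}^{r^{k}}(\mathbf{X}^{k})\neq 0,
\]
so $\nabla_{\mathbb{X}}\mathbb{H}_{\theta}(\mathbb{X}^{k})$ is invertible and $\mathbf{d}^{k}$ is uniquely determined.

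The positivity of $r^{k}$ I would secure by induction on $k$. The starting value $r^{0}=\langle x^{0},z^{0}\rangle/n$ is positive because $\mathbf{X}^{0}\in\Xi$. If $r^{k}>0$, the increment computed in step 3 is $dr^{k}=-\bigl(\varepsilon r^{k}+(r^{k})^{2}\bigr)/(\varepsilon+2r^{k})$, whence the damped update reads
\[
r^{k+1}=r^{k}\Bigl(1-\zeta^{k}\,\tfrac{\varepsilon+r^{k}}{\varepsilon+2r^{k}}\Bigr).
\]
Since $r^{k}>0$ forces $0<\tfrac{\varepsilon+r^{k}}{\varepsilon+2r^{k}}<1$ and $\zeta^{k}\in(0,1)$, the bracketed factor is strictly positive, so $r^{k+1}>0$; this closes the induction and also guarantees $\varepsilon+2r^{k}>0$ throughout.

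The delicate point, and the main obstacle, is the feasibility invariant $\mathbf{X}^{k}\in\bar{\Xi}$, since an undamped Newton step need not by itself preserve $x^{k},z^{k}\geq 0$, and it is exactly the nonnegativity of the arguments that makes $D_{a},D_{b}$ strictly positive and thus lets Theorem 3.11 apply. Here I would lean on the design of the last equation of \eqref{889}: the penalty $\tfrac12\|x^{-}\|^{2}+\tfrac12\|z^{-}\|^{2}$ actively discourages negative components, and, as noted after Algorithm 1, one may append an additional damping after the Armijo step so that every iterate remains in $\bar{\Xi}$. With that safeguard the hypotheses of Lemma 4.1 hold at each $k$ and the nonsingularity argument above goes through verbatim; making this feasibility bookkeeping rigorous, rather than the purely algebraic determinant computation, is where the real work lies.
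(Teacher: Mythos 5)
Your proof follows the same core route as the paper's, but the paper's entire argument is just your second paragraph: it invokes the factorization $\det \nabla_{\mathbb{X}}\mathbb{H}_{\theta}(\mathbb{X}^{k})=(\varepsilon+2r^{k})\det \nabla H_{\theta_{1}}^{r^{k}}(\mathbf{X}^{k})$ of Lemma 4.1 together with Theorem 3.11, after asserting without verification that ``from the update rule of Algorithm 1, for all $k\geq 0$, $r^{k}>0$, $\mathbf{X}^{k}>0$.'' What you add is genuinely more than the paper offers. Your induction for $r^{k}>0$ via $r^{k+1}=r^{k}\bigl(1-\zeta^{k}\,\tfrac{\varepsilon+r^{k}}{\varepsilon+2r^{k}}\bigr)>0$ is correct (and works for $\zeta^{k}\in(0,1]$ as well), though note one entanglement you leave implicit: the closed-form increment $dr^{k}=-(\varepsilon r^{k}+(r^{k})^{2})/(\varepsilon+2r^{k})$ is only valid when $(x^{k})^{-}=(z^{k})^{-}=0$, i.e.\ when the last row of the Jacobian is $(0,0,\varepsilon+2r^{k})$, so the induction step for $r$ already consumes the feasibility invariant at step $k$ and the two invariants must be propagated jointly. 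Your diagnosis of $\mathbf{X}^{k}\in\bar{\Xi}$ as the real gap is exactly right: a Newton step with Armijo backtracking does not by itself preserve nonnegativity, and the paper's proof silently assumes it (it is guaranteed only if one performs the optional additional damping mentioned in the remarks after Algorithm 1). One small quibble: strict positivity of $D_{a},D_{b}$ is not formally needed to invoke Theorem 3.11 as stated, since that theorem claims nonsingularity for all $(x,z)\in\R^{2n}$; but your caution is defensible, because the proof of Theorem 3.11 via Lemma 3.10 really does rest on nonnegativity of these diagonal entries, which can degenerate for negative components (indeed the denominators $x_{i}+z_{i}+2r$ can even vanish there), and feasibility is in any case indispensable for the determinant factorization of Lemma 4.1 itself.
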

\begin{proof}
	From the update rule of Algorithm 1, we know that for all $ k \geq 0, ~r^{k}>0, \mathbf{X}^{k}>0,$ and $ \varepsilon>0, $  
	\begin{equation*}
	\text{det}~ \nabla \mathbb{H}_{\theta}(\mathbb{X}^{k})=(\varepsilon+2r^{k})~\text{det}~ \nabla H_{\theta_{1}}^{r^{k}}(\mathbf{X}^{k}).
	\end{equation*}
	In view of Theorem 3.11, we know that    $  \nabla H_{\theta_{1}}^{r^{k}}(\mathbf{X}^{k}) $ is nonsingular. Thus Step 3 of Algorithm 1 is well-defined.
\end{proof}

\subsection{Global convergence analysis}
\begin{definition}
	
	(Regular zero). Let $ \mathbb{X}^{*} \in \R^{2n+1}$ be a zero of $ \mathbb{H}_{\theta},$ that is, $ \mathbb{H}_{\theta}(\mathbb{X}^{*})=0.$ If the Jacobian matrix $\nabla_{\mathbb{X}}\mathbb{H}_{\theta}(\mathbb{X}^{*}) $ is nonsingular, $  \mathbb{X}^{*} $ is said to be a regular zero of $ \mathbb{H}_{\theta}. $ 
\end{definition}
The main interest of Algorithm 1 lies in the prospect of global convergence, as envisioned by the theory that we are developing now. This global convergence theory, is primarily based on the regularity of zeros [Definition 5.2]. We reproduce a concise result that can be found in the book of Bonnans  \cite{Bonnans}, in view of its importance to our algorithm.\\
We will prove the global convergence of Algorithme 1. First, we show that every $ \mathbf{d} \in  \triangle $ is a descent direction of $ \Theta $ at $ \mathbb{X}$, where
\begin{equation}
	\triangle(\mathbb{X})=\{\mathbf{d} \in \R^{n} ~~~|~~~ \nabla_{\mathbb{X}}\mathbb{H}_{\theta}(\mathbb{X})\mathbf{d}=\mathbb{H}_{\theta}(\mathbb{X})\}.
\end{equation}
\begin{lemma}(see \cite{Yamasshita})
	If $ \mathbb{X}$ is not a solution of NCP, i.e.  $ \Theta(\mathbb{X})>0 $, then every $ \mathbf{d} \in \triangle(\mathbb{X})$ satisfies the descent condition for $ \mathbb{X}$, i.e., $ \nabla \Theta(\mathbb{X})^{T}\mathbf{d}<0.  $
\end{lemma}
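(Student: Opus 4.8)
The plan is to reduce the claim to a one-line computation built on the chain rule for the squared-norm merit function. First I would record that, since $\Theta(\mathbb{X})=\tfrac{1}{2}\Vert \mathbb{H}_{\theta}(\mathbb{X})\Vert^{2}=\tfrac{1}{2}\,\mathbb{H}_{\theta}(\mathbb{X})^{T}\mathbb{H}_{\theta}(\mathbb{X})$, differentiating through the composition gives
\[
\nabla \Theta(\mathbb{X})=\nabla_{\mathbb{X}}\mathbb{H}_{\theta}(\mathbb{X})^{T}\,\mathbb{H}_{\theta}(\mathbb{X}),
\]
where $\nabla_{\mathbb{X}}\mathbb{H}_{\theta}(\mathbb{X})$ is the Jacobian exhibited earlier. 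This is the only analytic ingredient required; in particular no nonsingularity or $\mathcal{P}_{0}$-structure enters, because $\mathbf{d}$ is assumed given as an element of $\triangle(\mathbb{X})$.

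Next I would take an arbitrary $\mathbf{d}\in\triangle(\mathbb{X})$, which is a Newton direction satisfying $\nabla_{\mathbb{X}}\mathbb{H}_{\theta}(\mathbb{X})\mathbf{d}=-\mathbb{H}_{\theta}(\mathbb{X})$, the sign being that of the equation $\mathbb{H}_{\theta}(\mathbb{X}^{k})+\nabla_{\mathbb{X}}\mathbb{H}_{\theta}(\mathbb{X}^{k})\mathbf{d}^{k}=0$ in Step 3 of Algorithm 1. Substituting the gradient formula and then this defining relation,
\[
\nabla \Theta(\mathbb{X})^{T}\mathbf{d}=\mathbb{H}_{\theta}(\mathbb{X})^{T}\nabla_{\mathbb{X}}\mathbb{H}_{\theta}(\mathbb{X})\mathbf{d}=-\mathbb{H}_{\theta}(\mathbb{X})^{T}\mathbb{H}_{\theta}(\mathbb{X})=-\Vert \mathbb{H}_{\theta}(\mathbb{X})\Vert^{2}=-2\Theta(\mathbb{X}).
\]
Finally, the hypothesis that $\mathbb{X}$ is not a solution of NCP is exactly $\Theta(\mathbb{X})>0$, so the right-hand side is strictly negative and $\nabla \Theta(\mathbb{X})^{T}\mathbf{d}<0$, which is the asserted descent condition.

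The argument is essentially mechanical, so there is no deep obstacle; the single point demanding care is the sign convention, and I would state it explicitly at the outset of the proof. As literally printed, the defining relation for $\triangle(\mathbb{X})$ reads $\nabla_{\mathbb{X}}\mathbb{H}_{\theta}(\mathbb{X})\mathbf{d}=\mathbb{H}_{\theta}(\mathbb{X})$, which would turn the inner product above into $+2\Theta(\mathbb{X})>0$ and yield an \emph{ascent} direction. For the conclusion to be correct, $\triangle(\mathbb{X})$ must be read with the Newton sign $\nabla_{\mathbb{X}}\mathbb{H}_{\theta}(\mathbb{X})\mathbf{d}=-\mathbb{H}_{\theta}(\mathbb{X})$, consistent with Step 3; aligning the two is the only subtlety, after which the computation closes immediately.
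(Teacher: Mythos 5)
Your proof is correct and is the standard argument: the paper itself gives no proof of this lemma (it defers to \cite{Yamasshita}), and the proof there is precisely your chain-rule computation $\nabla \Theta(\mathbb{X})^{T}\mathbf{d}=\mathbb{H}_{\theta}(\mathbb{X})^{T}\nabla_{\mathbb{X}}\mathbb{H}_{\theta}(\mathbb{X})\mathbf{d}=-\Vert \mathbb{H}_{\theta}(\mathbb{X})\Vert^{2}=-2\Theta(\mathbb{X})<0$, needing no nonsingularity or $\mathcal{P}_{0}$ structure. You are also right to flag the sign typo: the paper's displayed definition $\nabla_{\mathbb{X}}\mathbb{H}_{\theta}(\mathbb{X})\mathbf{d}=\mathbb{H}_{\theta}(\mathbb{X})$ is inconsistent with Step 3 of Algorithm 1 and would make $\mathbf{d}$ an ascent direction, so $\triangle(\mathbb{X})$ must indeed be read with $\nabla_{\mathbb{X}}\mathbb{H}_{\theta}(\mathbb{X})\mathbf{d}=-\mathbb{H}_{\theta}(\mathbb{X})$, exactly as you state.
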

\begin{theorem}
	Every limit point $ \mathbb{X}^{*}=(\mathbf{X}^{*}, r^{*}) $ of a sequence$ \{\mathbb{X}^{k}\} $ generated by Algorithme 1 corresponds to a solution of NCP.  
\end{theorem}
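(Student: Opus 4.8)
The plan is to reduce the statement to the single vector equation $\mathbb{H}_{\theta}(\mathbb{X}^{*})=0$ and then decode its block structure into the three NCP conditions. Accordingly the argument splits into a line-search part, which forces the merit value $\Theta$ to vanish at any limit point, and a purely algebraic part, which reads off the solution of \eqref{1} from $\mathbb{H}_{\theta}(\mathbb{X}^{*})=0$. First I would record the descent mechanism. The Newton direction $\mathbf{d}^{k}$ of Step~3 solves $\nabla_{\mathbb{X}}\mathbb{H}_{\theta}(\mathbb{X}^{k})\mathbf{d}^{k}=-\mathbb{H}_{\theta}(\mathbb{X}^{k})$, and since $\nabla\Theta=\nabla_{\mathbb{X}}\mathbb{H}_{\theta}^{T}\mathbb{H}_{\theta}$ one gets $\nabla\Theta(\mathbb{X}^{k})^{T}\mathbf{d}^{k}=-\Vert\mathbb{H}_{\theta}(\mathbb{X}^{k})\Vert^{2}=-2\Theta(\mathbb{X}^{k})$, which is exactly the strict descent asserted in Lemma~5.4 whenever $\mathbb{X}^{k}$ is not a zero. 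The Armijo test in Step~4 then yields $\Theta(\mathbb{X}^{k+1})\le(1-2\tau\zeta^{k})\Theta(\mathbb{X}^{k})$, so $\{\Theta(\mathbb{X}^{k})\}$ is nonincreasing and bounded below, hence convergent to some $\Theta^{*}\ge0$; by continuity $\Theta(\mathbb{X}^{*})=\Theta^{*}$ along the subsequence defining the limit point $\mathbb{X}^{*}$.

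Next I would invoke the global line-search convergence result cited from Bonnans: because the search directions are Newton directions and the Jacobian turns out to be nonsingular at $\mathbb{X}^{*}$ (established below), the directions are gradient-related with uniformly bounded inverses along the subsequence, the accepted step sizes stay bounded away from $0$, and every limit point is a stationary point of $\Theta$, i.e. $\nabla\Theta(\mathbb{X}^{*})=0$. The nonsingularity is the hinge. Since the update of the last coordinate reads $r^{k+1}=r^{k}\bigl(1-\zeta^{k}\tfrac{\varepsilon+r^{k}}{\varepsilon+2r^{k}}\bigr)$ with the factor in $(0,1)$, the sequence $\{r^{k}\}$ stays positive and decreasing, so $r^{*}\ge0$ and $\varepsilon+2r^{*}\ge\varepsilon>0$; keeping the iterates nonnegative (the optional damping after Step~4) gives $\mathbf{X}^{*}\in\bar{\Xi}$, hence $x^{*-}=z^{*-}=0$. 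Lemma~5.1 then factorises $\det\nabla\mathbb{H}_{\theta}(\mathbb{X}^{*})=(\varepsilon+2r^{*})\det\nabla H_{\theta_{1}}^{r^{*}}(\mathbf{X}^{*})$, which is nonzero by Theorem~3.11 because $F$ is a $\mathcal{P}_{0}$-function. Combined with $\nabla\Theta(\mathbb{X}^{*})=\nabla_{\mathbb{X}}\mathbb{H}_{\theta}(\mathbb{X}^{*})^{T}\mathbb{H}_{\theta}(\mathbb{X}^{*})=0$ and the invertibility of $\nabla_{\mathbb{X}}\mathbb{H}_{\theta}(\mathbb{X}^{*})$, this gives $\mathbb{H}_{\theta}(\mathbb{X}^{*})=0$.

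It then remains to decode $\mathbb{H}_{\theta}(\mathbb{X}^{*})=0$. The last scalar equation $\tfrac12\Vert x^{*-}\Vert^{2}+\tfrac12\Vert z^{*-}\Vert^{2}+(r^{*})^{2}+\varepsilon r^{*}=0$ with $r^{*}\ge0$ has only nonnegative summands, so each vanishes: $x^{*}\ge0$, $z^{*}\ge0$, and $r^{*}=0$. The first block gives $z^{*}=F(x^{*})\ge0$. For complementarity I would pass to the limit in $G_{r^{k}}^{1}(x_{i}^{k},z_{i}^{k})=\tfrac{x_{i}^{k}z_{i}^{k}-(r^{k})^{2}}{x_{i}^{k}+z_{i}^{k}+2r^{k}}\to0$: when $x_{i}^{*}+z_{i}^{*}>0$ the denominator is bounded away from $0$ and $r^{*}=0$ forces $x_{i}^{*}z_{i}^{*}=0$, while when $x_{i}^{*}=z_{i}^{*}=0$ complementarity is immediate. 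Hence $x^{*}\ge0$, $F(x^{*})\ge0$ and $x^{*T}F(x^{*})=0$, so $\mathbb{X}^{*}$ corresponds to a solution of the NCP.

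I expect the main obstacle to be precisely this hinge: controlling the limiting Jacobian and the accepted step sizes exactly where the scheme degenerates, namely as $r^{k}\downarrow0$ while components of $x^{k},z^{k}$ approach the boundary. Proving that $\mathbf{X}^{*}\in\bar{\Xi}$, so that Lemma~5.1 applies and the inverse Jacobians remain uniformly bounded along the convergent subsequence, is what makes the cited line-search theorem usable; the accompanying technical point is the continuity of $G_{r}^{1}$ at the corner $(x,z,r)=(0,0,0)$, where the denominator vanishes and the case split above is needed.
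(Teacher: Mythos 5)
There is a genuine gap at precisely the point you call the hinge. You derive $\mathbb{H}_{\theta}(\mathbb{X}^{*})=0$ from stationarity $\nabla\Theta(\mathbb{X}^{*})=0$ plus invertibility of $\nabla_{\mathbb{X}}\mathbb{H}_{\theta}(\mathbb{X}^{*})$, and you ground that invertibility in the determinant factorization (the lemma $\det\nabla\mathbb{H}_{\theta}=(\varepsilon+2r)\det\nabla H_{\theta_{1}}^{r}$) together with Theorem 3.11. But Theorem 3.11 is stated, and proved, only for $r>0$, whereas at any zero of $\mathbb{H}_{\theta}$ the last equation forces $r^{*}=0$; your own computation of the $r$-update shows $\{r^{k}\}$ decreasing toward exactly this degenerate regime, so $r^{*}=0$ is the case you cannot exclude and in fact expect. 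At $r^{*}=0$ the relevant matrix is $\lim_{r\to 0}\nabla_{\mathbb{X}}\mathbb{H}_{\theta}(\mathbf{X}^{*},r)$, computed in Lemma 5.5, and the paper's Lemma 5.6 shows its invertibility does \emph{not} follow from $F$ being a $\mathcal{P}_{0}$-function: one needs strict complementarity together with $F$ a $\mathcal{P}$-function, or $\nabla F(x^{*})-I$ a $\mathcal{P}_{0}$-matrix (in the doubly degenerate case $x_{i}^{*}=z_{i}^{*}=0$ the lower blocks collapse to $\tfrac{1}{4}I$). Your appeal to the Bonnans line-search theorem is circular at the same spot: ``inverse Jacobians uniformly bounded along the subsequence'' and ``steps bounded away from zero'' are consequences of the very nonsingularity you are trying to establish, and they can fail exactly as $r^{k}\downarrow 0$ with components of $(x^{k},z^{k})$ approaching the boundary. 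A secondary issue: the factorization lemma requires $\mathbf{X}^{*}\in\bar{\Xi}$, but the damping that keeps iterates nonnegative is explicitly \emph{optional} in Algorithm 1, so you are silently strengthening the algorithm.

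The paper's proof avoids Jacobian invertibility at the limit altogether, which is why it goes through without your extra hypotheses. It argues by contradiction in the Yamashita--Fukushima style: supposing $\Theta(\mathbb{X}^{*})>0$, it extracts a subsequence with $d^{k}\to d^{*}\in\Delta(\mathbb{X}^{*})$ (using uniform compactness and closedness of the direction map $\Delta$ near $\mathbb{X}^{*}$, borrowed from the cited reference), and then splits on the step sizes: if $\inf\zeta^{k}\ge\zeta>0$, the telescoping Armijo decrease and convergence of $\{\Theta(\mathbb{X}^{k})\}$ force $\nabla\Theta(\mathbb{X}^{*})^{T}d^{*}=0$; if $\inf\zeta^{k}=0$, the \emph{failed} Armijo test at the trial step $\zeta^{k}/\varrho$ passes to the limit to give $\nabla\Theta(\mathbb{X}^{*})^{T}d^{*}\ge\tau\,\nabla\Theta(\mathbb{X}^{*})^{T}d^{*}$, hence $\nabla\Theta(\mathbb{X}^{*})^{T}d^{*}\ge 0$. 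Either way this contradicts the strict descent property $\nabla\Theta(\mathbb{X}^{*})^{T}d^{*}<0$ of the descent lemma. Your preliminary bookkeeping (the identity $\nabla\Theta(\mathbb{X}^{k})^{T}\mathbf{d}^{k}=-2\Theta(\mathbb{X}^{k})$ and the contraction $\Theta(\mathbb{X}^{k+1})\le(1-2\tau\zeta^{k})\Theta(\mathbb{X}^{k})$) and your final decoding of $\mathbb{H}_{\theta}(\mathbb{X}^{*})=0$ into the NCP conditions, including the careful treatment of $G_{r}^{1}$ at the corner $(0,0,0)$, are correct and more explicit than the paper's; but to close the argument you must replace the invertibility hinge by a step-size dichotomy of the above kind, or else add one of the hypotheses of Lemma 5.6 --- as written, the proof fails at degenerate solutions, which is the only nontrivial case.
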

\begin{proof}
Owing to Step 4, $ \{\Theta(\mathbb{X}^{k}) \} $ is decreasing monotonically nonnegative. It must converge to some $ \Theta(\mathbb{X}^{*})\geq 0$. We assume $ \Theta(\mathbb{X}^{*})   \geq 0$. Let $ \mathbb{X}^{*}$ be an accumulation point of $ \{\mathbb{X}^{*}\} $ and $ \{\mathbb{X}^{k}\}_{k} $ be a subsequence converging to 	$ \{\mathbb{X}^{*}\} $.\\
$ \Delta $ is uniformly compact near and closed at  $ \mathbb{X}^{*} $ (see \cite{Yamasshita}), we assume, without loss of generality, that \\ $\lim\limits_{\substack{k \rightarrow  \infty}}d^{k}=d^{*} \in \Delta(\mathbb{X}^{*})$. From Lemma 5.6, we wil get the contradiction if we can prove $ \nabla \Theta(\mathbb{X}^{*})^{T}\mathbf{d}^{*}=0. $ This can be obtained by considering the following two cases: 
\begin{itemize}
\item Suppose that $ \inf\{\zeta^{k}\} \geq \zeta > 0.$ Then we have 
\begin{equation*}
\Theta(\mathbb{X}^{k}+\zeta^{k}\mathbf{d}^{k})-\Theta(\mathbb{X}^{k})\leq \zeta^{k} \tau~ \nabla \Theta(\mathbb{X}^{k})^{T}\mathbf{d}^{k} \leq 0.
\end{equation*}
It is obvious that $ \nabla \Theta(\mathbb{X}^{*})^{T}\mathbf{d}^{*}=0  $ is satisfied.
\item Suppose that $ \inf \{\zeta^{k}\}=0.$ In this case, we assume $\lim\limits_{\substack{k \rightarrow  \infty}}\zeta^{k}=0 \in \Delta(\mathbb{X}^{*})$ without loss of generality. By line search, we have 

\begin{equation*}
	\dfrac{\Theta(\mathbb{X}^{k}+\frac{\zeta^{k}}{\varrho^{j_{k}}}\mathbf{d}^{k})-\Theta(\mathbb{X}^{k})}{\frac{\zeta^{k}}{\varrho^{j_{k}}}}> \tau~ \nabla \Theta(\mathbb{X}^{k})^{T}\mathbf{d}^{k},
\end{equation*}
taking the limit yields 

\begin{equation*}
	\nabla \Theta(\mathbb{X}^{*})^{T}\mathbf{d}^{*} \geq \tau \nabla \Theta(\mathbb{X}^{*})^{T}\mathbf{d}^{*}.
\end{equation*}
Since $ \tau \in (0, 1/2)$, we have $ \nabla \Theta(\mathbb{X}^{*})^{T}\mathbf{d}^{*} \geq 0. $ Hence $ \nabla \Theta(\mathbb{X}^{*})^{T}\mathbf{d}^{*} =0. $ \\
\end{itemize}
 We get the contradiction. The proof is complete.
\end{proof}
Below is a result about the Jacobian matrix of $ \mathbb{H}_{\theta}(\mathbb{X}),$ when $ r $ goes to $ 0$.
 \begin{lemma}
 Let $\mathbb{H}_{\theta}(\mathbb{X})  $ define by \eqref{889}. Then the Jacobian matrix of $\mathbb{H}_{\theta}(\mathbf{X}^{*},r)$ when $ r $ goes to $ 0 $ is:
 \begin{equation*}
 \lim_{r \to 0}  ~ (\nabla_{\mathbb{X}}	\mathbb{H}_{\theta}(\mathbf{X}^{*},r))= \begin{bmatrix}
 \nabla F(x^{*}) & -I_{n \times n}&0_{n \times 1}  \\
 \phi(Z^{*}) & \phi(X^{*})& 0_{n \times 1} \\
 0_{1 \times n}& 0_{1 \times n}      &      \varepsilon  \\
 \end{bmatrix},
 \end{equation*}
 where 
 \begin{equation*}
 \phi(Z^{*})_{ii}=\left\{\begin{array}{ccc}
 1 &\text{if}& z^{*}_{i} \neq 0 ~~\text{and} ~~ x^{*}_{i}=0\\
 0 &\text{if}& z^{*}_{i} = 0 ~~\text{and} ~~ x^{*}_{i} \ne 0,\\
 \frac{1}{4} &\text{if}& z^{*}_{i} = 0 ~~\text{and} ~~ x^{*}_{i}=0,\\
 \end{array} \right. \quad \text{and} \quad \phi(X^{*})_{ii}=\left\{\begin{array}{ccc}
 1 &\text{if}& x^{*}_{i} \neq 0 ~~\text{and} ~~ z^{*}_{i}=0\\
 0 &\text{if}& x^{*}_{i} = 0 ~~\text{and} ~~ z^{*}_{i} \ne 0,\\
 \frac{1}{4} &\text{if}& z^{*}_{i} = 0 ~~\text{and} ~~ x^{*}_{i}=0,\\
 \end{array} \right.
 \end{equation*}
 \end{lemma}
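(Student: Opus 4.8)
The plan is to compute the limit entrywise, using the block structure of $\nabla_{\mathbb{X}}\mathbb{H}_{\theta}$ exhibited just above equation \eqref{889}. Throughout I evaluate at a fixed solution $\mathbf{X}^{*}=(x^{*},z^{*})$, so that $x^{*}\geq 0$, $z^{*}=F(x^{*})\geq 0$ and $x_{i}^{*}z_{i}^{*}=0$ for every $i$, and only the parameter $r$ is driven to $0^{+}$. Several blocks are immediate: $\nabla_{x}F$ and the $-I$ block do not depend on $r$, the $(1,3)$ block $\partial_{r}(F(x)-z)$ is identically $0_{n\times 1}$, and in the last row the quantities $(x^{*})^{-}$ and $(z^{*})^{-}$ vanish because $x^{*},z^{*}\geq 0$, while the scalar corner $2r+\varepsilon\to\varepsilon$. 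This already yields the entire top row and the entire bottom row of the target matrix; what remains is the middle row, that is, the limits of the diagonal blocks $D_{a}(x^{*},z^{*})$, $D_{b}(x^{*},z^{*})$ and of the $(2,3)$ block $\lim_{r\to 0}\partial_{r}G_{r}^{1}$.

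The core of the argument is the limit of the entries $a_{i}=\bigl(\tfrac{z_{i}^{*}+r}{x_{i}^{*}+z_{i}^{*}+2r}\bigr)^{2}$ and $b_{i}=\bigl(\tfrac{x_{i}^{*}+r}{x_{i}^{*}+z_{i}^{*}+2r}\bigr)^{2}$ from Lemma 3.8. I would argue index by index, using complementarity to reduce to three mutually exclusive cases, the fourth logical case $x_{i}^{*}\neq 0$ and $z_{i}^{*}\neq 0$ being ruled out by $x_{i}^{*}z_{i}^{*}=0$. If $z_{i}^{*}\neq 0$ and $x_{i}^{*}=0$, then the denominator tends to $z_{i}^{*}\neq 0$ and one reads off $a_{i}\to 1$, $b_{i}\to 0$; symmetrically, if $x_{i}^{*}\neq 0$ and $z_{i}^{*}=0$, then $a_{i}\to 0$, $b_{i}\to 1$. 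These are exactly the first two branches of $\phi(Z^{*})$ and $\phi(X^{*})$.

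The delicate point, and the main obstacle, is the degenerate index set $\{i:x_{i}^{*}=z_{i}^{*}=0\}$, where each of $a_{i}$, $b_{i}$ and the $(2,3)$ block presents an indeterminate $0/0$ form as $r\to 0$. Here the clean device is to substitute $x_{i}^{*}=z_{i}^{*}=0$ first and keep $r>0$ explicit, so that $a_{i}=b_{i}=(r/2r)^{2}=\tfrac14$ for every $r$, which delivers the value $\tfrac14$ recorded in the third branch of $\phi(Z^{*})$ and $\phi(X^{*})$ and thus completes the two diagonal blocks. The same explicit substitution applied to $\partial_{r}G_{r}^{1}=\tfrac{-2r}{x+z+2r}+\tfrac{2(r^{2}-xz)}{(x+z+2r)^{2}}$ makes the $(2,3)$ entries computable: at every strictly complementary index the numerator vanishes faster than the nonzero denominator and the entry tends to $0$, while the degenerate indices are precisely where this column must be checked by hand from the reduced expression. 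Once these indeterminate forms are resolved, reassembling the top row, the middle row $[\phi(Z^{*})\ \phi(X^{*})\ \cdot]$, and the bottom row $[0\ 0\ \varepsilon]$ yields the stated matrix.
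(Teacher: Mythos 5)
Your overall route is the same as the paper's: compute the limit blockwise, use complementarity at the solution to split each index into three mutually exclusive cases (the case $x_i^*\neq 0$, $z_i^*\neq 0$ being excluded by $x_i^*z_i^*=0$), and resolve the degenerate indices by substituting $x_i^*=z_i^*=0$ first and keeping $r>0$ explicit, which gives $a_i=b_i=(r/2r)^2=\tfrac14$. Your handling of $D_a$, $D_b$, the $F$-blocks, and the bottom row $[(x^-)^T,(z^-)^T,2r+\varepsilon]\to[0,0,\varepsilon]$ coincides with the paper's computation (your framing is in fact cleaner, since you fix $\mathbf{X}^*$ and vary only $r$, whereas the paper writes hybrid limits such as $x_i^*\to 0$).

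There is, however, a genuine gap, and it sits exactly at the one delicate entry: the $(2,3)$ block at degenerate indices. You announce that these indices ``must be checked by hand from the reduced expression'' but you never perform the check. Carrying it out with $x_i^*=z_i^*=0$ gives, for every $r>0$,
\begin{equation*}
\left(\partial_r G_r^1\right)_i=\frac{-2r}{2r}+\frac{2r^2}{(2r)^2}=-1+\frac{1}{2}=-\frac{1}{2},
\end{equation*}
so the limit at a degenerate index is $-\tfrac12$, not $0$. The paper's own proof claims $0$ here via the expression $\tfrac{2r^2}{2r^2}$, i.e.\ it silently replaces $(2r)^2=4r^2$ by $2r^2$; your deferred verification therefore lands precisely on an algebra slip in the paper, and the lemma's third column $0_{n\times 1}$ is only correct off the degenerate set $\{i: x_i^*=z_i^*=0\}$. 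At strictly complementary indices your argument (numerator vanishing against a nonzero denominator) is fine. Two remarks to make your write-up complete and honest: first, you should state the corrected $(2,3)$ column, with entries $0$ at strictly complementary indices and $-\tfrac12$ at degenerate ones; second, this correction does not damage the downstream determinant identities (Lemma 5.6, Theorem 5.10), because the last row of the limit matrix is $(0,\dots,0,\varepsilon)$, so expanding the determinant along it eliminates the third column regardless of its entries.
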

 
 \begin{proof}
 Let 
 \begin{equation*} 
 \mathbb{H}_{\theta}(\mathbb{X})=\left [
 \begin{array}{llllll} 
 \mathbb{H}_{\theta,~1}(\mathbb{X}) \\
 \mathbb{H}_{\theta,~2}(\mathbb{X}) \\
 \mathbb{H}_{\theta,~3}(\mathbb{X})
 \end{array}
 \right ]=\left [
 \begin{array}{llllll} 
 F(x)-z \\
 \dfrac{xz-r^{2}}{x+z+2r} \vspace{0.2cm}\\
 
 \frac{1}{2} \Vert x^{-}\Vert^{2}+ \frac{1}{2} \Vert z^{-}\Vert^{2}+r^{2}+\varepsilon r
 \end{array}
 \right ].
 \end{equation*}
 The Jacobian matrix of  $ \mathbb{H}_{\theta} $ is:  
 \begin{equation*}
 \nabla_{\mathbb{X}}	\mathbb{H}_{\theta}(\mathbb{X})=
 \begin{pmatrix}
 \nabla_{x} F(x)& -I_{n \times n} & 0_{n \times 1}  \\
 \nabla_{x} \mathbb{H}_{\theta,2}(\mathbb{X})	& \nabla_{z} \mathbb{H}_{\theta,2}(\mathbb{X})& \partial_{r} \mathbb{H}_{\theta,2}(\mathbb{X}) \\
 (x^{-})^{\text{T}}&(z^{-})^{\text{T}}& 2r+\varepsilon  \\
 \end{pmatrix}.
 \end{equation*}
 Let us to calculate $ \lim\limits_{\substack{r \rightarrow 0}}\nabla_{\mathbb{X}}	\mathbb{H}_{\theta}(\mathbf{X}^{*},r): $
 \begin{enumerate}
 	\item The derivative of $ \mathbb{H}_{\theta,~2}(\mathbf{X},r) $ with respect to $ x$  is:
 	\begin{equation*}
 	\nabla_{x} \mathbb{H}_{\theta,2} (x^{*},z^{*},r)=\text{diag}\left(\left(\dfrac{z^{*}+r}{x^{*}+z^{*}+2r}\right)^{2}\right)_{n \times n}, 
 	\end{equation*}
 	when $ r $ goes to $ 0 $  the only three cases to consider are: 
 	\begin{itemize}
 		\item  $ x_{i}^{*}\to 0, $ and $ z_{i}^{*}>0 $ $\forall i \in \{1, ..., n\} $ then\\
 		$ ~~~~~$   $ \lim\limits_{\substack{r \rightarrow 0 \\ x^{*}_{i}\rightarrow 0 \\ z^{*}_{i} \ne 0}} (\nabla_{x} 	\mathbb{H}_{\theta,2} (x^{*},z^{*},r))_{ii}=  \lim\limits_{\substack{r \to 0}} \left(\dfrac{z_{i}+r}{z_{i}+2r}\right)^{2}=1.$
 		\item  $ x_{i}^{*}>0, $ and $ z_{i}^{*} \to 0 $ $\forall i \in \{1, ..., n\} $ then \\ 
 		$ ~~~~~$   $ \lim\limits_{\substack{r \rightarrow 0 \\ z^{*}_{i}\rightarrow 0 \\ x_{i}^{*} \ne 0}} (\nabla_{x}\mathbb{H}_{\theta,2} (x^{*},z^{*},r))_{ii}=\lim\limits_{\substack{r \rightarrow 0 }}\left(\dfrac{r}{x_{i}+2r}\right)^{2}=0.$
 			\item  $ x_{i}^{*} \to 0, $ and $ z_{i}^{*} \to 0 $ $\forall i \in \{1, ..., n\} $ then \\ 
 		$ ~~~~~$   $ \lim\limits_{\substack{r \rightarrow 0 \\ z^{*}_{i}\rightarrow 0 \\ x_{i}^{*} \rightarrow 0}} (\nabla_{x}\mathbb{H}_{\theta,2} (x^{*},z^{*},r))_{ii}=\lim\limits_{\substack{r \rightarrow 0 }}\left(\dfrac{r}{2r}\right)^{2}=\dfrac{1}{4}.$
 	\end{itemize}
 	\item The derivative of  $ \mathbb{H}_{\theta,~2}(\mathbf{X},r) $ with respect to $ z$ is: 	
 	\begin{equation*}
 	\nabla_{z} \mathbb{H}_{s,2} (x^{*},z^{*},r)=\text{diag}\left(\left(\dfrac{x^{*}+r}{x^{*}+z^{*}+2r}\right)^{2}\right)_{n \times n},   
 	\end{equation*}
 	as below, the only three cases to consider are:
 	\begin{itemize}
 		\item  $ x_{i}^{*} \to 0, $ and $ z_{i}^{*}>0 $ $\forall i \in \{1, ..., n\} $ then \\
 		$ ~~~~~$   $ \lim\limits_{\substack{r \rightarrow 0 \\ x^{*}_{i}\rightarrow 0 \\ z_{i}^{*} \ne 0}} (\nabla_{z} 	\mathbb{F}_{s,2} (x^{*},z^{*},r))_{ii}=  \lim\limits_{\substack{r \rightarrow 0}}\left(\dfrac{r}{z_{i}+2r}\right)^{2}=0.$
 		\item  $ x_{i}^{*}>0, $ and $ z_{i}^{*} \to 0 $ $\forall i \in \{1, ..., n\} $ then \\ 
 		$ ~~~~~$   $ \lim\limits_{\substack{r \rightarrow 0 \\ z^{*}_{i}\rightarrow 0 \\ x_{i}^{*} \ne 0}}(\nabla_{z} 	\mathbb{F}_{s,2} (x^{*},z^{*},r))_{ii}=  \lim\limits_{\substack{r \rightarrow 0}} \left(\dfrac{x_{i}+r}{x_{i}+2r}\right)^{2}=1.$\\
 		\item  $ x_{i}^{*} \to 0, $ and $ z_{i}^{*} \to 0 $ $\forall i \in \{1, ..., n\} $ then \\ 
 		$ ~~~~~$   $ \lim\limits_{\substack{r \rightarrow 0 \\ z^{*}_{i}\rightarrow 0 \\ x_{i}^{*} \rightarrow 0}}(\nabla_{z} 	\mathbb{F}_{s,2} (x^{*},z^{*},r))_{ii}=  \lim\limits_{\substack{r \rightarrow 0}} \left(\dfrac{r}{2r}\right)^{2}=\dfrac{1}{4}.$
 	\end{itemize}
 	\item The derivative of  $ \mathbb{H}_{\theta,~2}(\mathbf{X},r) $  with respect to $ r$ is: 
 	\begin{equation*}
 	\partial_{r}\mathbb{H}_{\theta,~2}(x^{*},z^{*},r)=\text{diag}\left(\dfrac{-2r}{x+z+2r}+\dfrac{2(r^{2}-xz)}{(x+z+2r)^{2}}   \right)_{n \times 1},
 	\end{equation*}
 	when $ r $  goes to $ 0 $ the only three cases to consider are:
 	\begin{itemize}
 		\item $ x_{i}^{*} \to 0, $ and $ z_{i}^{*}>0 $  $\forall i \in \{1, ..., n\}$  then 
 		\begin{equation*}
 		\lim\limits_{\substack{r \rightarrow 0 \\ x^{*}_{i}\rightarrow 0 \\ z_{i}^{*} \ne 0}}(\partial_{r} 	\mathbb{H}_{\theta,~2} (x^{*},z^{*},r))_{i}
 		=\lim_{r \to 0}\left[\dfrac{-2r}{z_{i}+2r}+\dfrac{2r^{2}}{(z_{i}+2r)^{2}}\right]=0.
 		\end{equation*}	
 		\item  $ x_{i}^{*}>0, $ and $ z_{i}^{*} \to 0 $  $\forall i \in \{1, ..., n\}$ then 
 		\begin{equation*}
 		\lim\limits_{\substack{r \rightarrow 0 \\ z^{*}_{i}\rightarrow 0 \\ x_{i}^{*} \ne 0}}(\partial_{r} 	\mathbb{H}_{\theta,~2} (x^{*},z^{*},r))_{i}=\lim_{r \to 0}\left[\dfrac{-2r}{x_{i}+2r}+\dfrac{2r^{2}}{(x_{i}+2r)^{2}}\right]=0.
 		\end{equation*}
 			\item  $ x_{i}^{*} \to 0, $ and $ z_{i}^{*} \to 0 $  $\forall i \in \{1, ..., n\}$ then 
 		\begin{equation*}
 		\lim\limits_{\substack{r \rightarrow 0 \\ z^{*}_{i}\rightarrow 0 \\ x_{i}^{*} \rightarrow 0}}(\partial_{r} 	\mathbb{H}_{\theta,~2} (x^{*},z^{*},r))_{i}=\lim_{r \to 0}\left[\dfrac{-2r}{2r}+\dfrac{2r^{2}}{2r^{2}}\right]=0.
 		\end{equation*}
 	\end{itemize}
 \end{enumerate}
 \end{proof}

  We present now, three situations where we can conclude about the nonsingularity of $\lim\limits_{\substack{r \rightarrow 0}}\nabla_{\mathbb{X}}\mathbb{H}_{\theta}(\mathbf{X}^{*},r)$.\\

\begin{lemma}

	Suppose that $ \mathbf{X}^{*}=(x^{*}, z^{*}) $ is a solution of NCP, we have three possibilities when computing the determinant of $ \mathbb{H}_{\theta} $ on  $ (x^{*}, z^{*}).$

\begin{itemize}
	\item If $ z_{i}^{*}=0$ and  $ x_{i}^{*} = 0 , \quad \forall i=1,. . .n,~~$ i.e. $ (x^{*},z^{*})=(0,0) $\\
	\begin{equation*}
	\lim_{r \to 0} \text{det} ~ (\nabla_{\mathbb{X}}	\mathbb{H}_{\theta}(\mathbf{X}^{*},r))= \left\vert \begin{bmatrix}
	\begin{pmatrix}
	\nabla F(x^{*}) & -I_{n \times n}  \\
	\phi(Z^{*}) & \phi(X^{*}) \\
	\end{pmatrix}
	&        
	\begin{matrix}
	0 \\[3mm]
	0 \\[3mm]
	\end{matrix}
	\\ 
	0\quad  \quad  \quad 0      &      \varepsilon  \\
	\end{bmatrix}\right\vert= \varepsilon \left\vert
	\left(\begin{array}{cc}
	\nabla F(x^{*})&-I_{n \times n}\\
	\frac{1}{4}I_{n \times n}& \frac{1}{4}I_{n \times n}
	\end{array}\right) \right 
	\vert= \varepsilon \vert \dfrac{1}{4} \nabla F(x^{*})+\dfrac{1}{4} I_{n \times n} \vert ,
	\end{equation*}
	therefore the matrix $\lim\limits_{\substack{r \rightarrow 0}}\nabla_{\mathbb{X}}\mathbb{H}_{\theta}(\mathbf{X}^{*},r)$ exists   and is invertible  if $ F $ is $ \mathcal{P}_{0}$-function.

 \item  If $ \exists~ \alpha >0 $  such that $ x^{*}_{i}+z^{*}_{i}>\alpha$, $\forall i \in \{1, ..., n\}$, then from Lemma \eqref{22786633}, $ \forall ~~\I \subset \{ 1, ..., n\} $we have:  
 \begin{equation*}
 \lim_{r \to 0} \text{det} ~ (\nabla_{\mathbb{X}}	\mathbb{H}_{\theta}(\mathbf{X}^{*},r))= \left\vert \begin{bmatrix}
 \begin{pmatrix}
 \nabla F(x^{*}) & -I_{n \times n}  \\
 \phi(Z^{*}) & \phi(X^{*}) \\
 \end{pmatrix}
 &        
 \begin{matrix}
 0 \\[3mm]
 0 \\[3mm]
 \end{matrix}
 \\ 
 0\quad  \quad  \quad 0      &      \varepsilon  \\
 \end{bmatrix}\right\vert= \varepsilon \left\vert
 \left(\begin{array}{cc}
 \nabla F(x^{*})&-I_{n \times n}\\
 \phi(Z^{*})&  \phi(X^{*})
 \end{array}\right) \right 
 \vert= \varepsilon \vert \nabla F(x^{*})_{ \I\I} \vert,
 \end{equation*}
therefore the matrix $\lim\limits_{\substack{r \rightarrow 0}}\nabla_{\mathbb{X}}\mathbb{H}_{\theta}(\mathbf{X}^{*},r)$ exists   and is invertible  if $ F $ is $ \mathcal{P}$-function. \\

\item If $0 \leq x \perp z \geq 0~~$  (i.e. without the assumption of a strict complementarity) we have: 
 \begin{equation*}
\lim_{r \to 0} \text{det} ~ (\nabla_{\mathbb{X}}	\mathbb{H}_{\theta}(\mathbf{X}^{*},r))= \left\vert \begin{bmatrix}
\begin{pmatrix}
\nabla F(x^{*}) & -I_{n \times n}  \\
\phi(Z^{*}) & \phi(X^{*}) \\
\end{pmatrix}
&        
\begin{matrix}
0 \\[3mm]
0 \\[3mm]
\end{matrix}
\\ 
0\quad  \quad  \quad 0      &      \varepsilon  \\
\end{bmatrix}\right\vert= \varepsilon \left\vert
\left(\begin{array}{cc}
\nabla F(x^{*})&-I_{n \times n}\\
\phi(Z^{*})&  \phi(X^{*})
\end{array}\right) \right 
\vert=\varepsilon \left\vert
\left(\begin{array}{cc}
\nabla F(x^{*})-I_{n \times n}&-I_{n \times n}\\
\phi(Z^{*})+\phi(X^{*})&  \phi(X^{*})
\end{array}\right) \right 
\vert.
\end{equation*}
From Lemma 3.10, we take:
\begin{equation}
\begin{split}
 &M=\nabla F(x^{*})-I_{n \times n},\\
 &N_{t}=\phi(X^{*}),\\
 &N_{s}=\phi(Z^{*})+\phi(X^{*}),
\end{split}
\end{equation}
 where $ N_{s} $ is a positive diagonal matrix, and $ N_{t}$ is a nonnegative diagonale matrix. Therefore the matrix $\lim\limits_{\substack{r \rightarrow 0}}\nabla_{\mathbb{X}}\mathbb{H}_{\theta}(\mathbf{X}^{*},r)$ exists   and is invertible  if $ \nabla F(x^{*}) -I_{n \times n} $ is $ \mathcal{P}_{0}$-matrix.
 
\end{itemize}
\end{lemma}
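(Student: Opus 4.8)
The statement collects three computations of the limiting determinant $\lim_{r\to 0}\det\bigl(\nabla_{\mathbb{X}}\mathbb{H}_{\theta}(\mathbf{X}^{*},r)\bigr)$ at a solution $(x^{*},z^{*})$ of NCP, distinguished by how the complementarity pattern sits on the indices. The backbone in all three cases is the same: by Lemma~5.8 the limiting Jacobian has the block form
\begin{equation*}
\lim_{r\to 0}\nabla_{\mathbb{X}}\mathbb{H}_{\theta}(\mathbf{X}^{*},r)=
\begin{pmatrix}
\nabla F(x^{*}) & -I_{n\times n} & 0_{n\times 1}\\
\phi(Z^{*}) & \phi(X^{*}) & 0_{n\times 1}\\
0_{1\times n} & 0_{1\times n} & \varepsilon
\end{pmatrix},
\end{equation*}
so the first move in every case is a Laplace expansion along the last row (equivalently, along the last column, whose only nonzero entry is $\varepsilon$). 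This factors out $\varepsilon$ and reduces each determinant to the $2n\times 2n$ determinant of the leading two-by-two block. From there I would treat the three cases separately, always aiming to invoke Lemma~3.10 ($N_{s}+N_{t}M$ nonsingular for a $\mathcal{P}_0$-matrix $M$).

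\emph{Case $(x^{*},z^{*})=(0,0)$.} Here every index falls in the third branch of the definitions of $\phi(Z^{*})$ and $\phi(X^{*})$, so both diagonal blocks equal $\tfrac14 I_{n\times n}$. I would compute the $2n\times 2n$ determinant by a standard block formula: since the top-right block is $-I$, I can eliminate the bottom-left block (e.g. add $\tfrac14$ times the second block-column operations, or use $\det\begin{psmallmatrix}A&-I\\C&D\end{psmallmatrix}=\det(D\cdot(-I)^{-1}\cdots)$ carefully) to reach $\det(\tfrac14\nabla F(x^{*})+\tfrac14 I)=\det\bigl(\tfrac14(\nabla F(x^{*})+I)\bigr)$. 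Nonsingularity then follows by applying Lemma~3.10 with $N_s=\tfrac14 I$ (positive diagonal), $N_t=\tfrac14 I$ (nonnegative diagonal), and $M=\nabla F(x^{*})$, which is a $\mathcal{P}_0$-matrix because $F$ is a $\mathcal{P}_0$-function.

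\emph{Strict-complementarity case ($x^{*}_i+z^{*}_i>\alpha$).} Now on each index exactly one of $x^{*}_i,z^{*}_i$ is nonzero, so $\phi(Z^{*})$ and $\phi(X^{*})$ are complementary $0/1$ diagonal matrices. Splitting the index set into $\mathbb{I}=\{i:z^{*}_i\neq 0\}$ and its complement, I would permute rows and columns so that the bottom block becomes a partial identity, and a block-determinant reduction then collapses the $2n\times 2n$ determinant to the principal minor $\det\bigl(\nabla F(x^{*})_{\mathbb{I}\mathbb{I}}\bigr)$; this is nonzero for every index subset precisely when $\nabla F(x^{*})$ is a $\mathcal{P}$-matrix (all principal minors positive), which is guaranteed if $F$ is a $\mathcal{P}$-function.

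\emph{General case (no strict complementarity).} This is the delicate one and the main obstacle, since indices with $x^{*}_i=z^{*}_i=0$ contribute the fractional entries $\tfrac14$ and the simple $0/1$ reductions no longer apply. The trick indicated in the statement is to perform a block row operation—subtract the second block-row from the first, turning the top blocks into $\nabla F(x^{*})-I$ and $-I$—so that the determinant rewrites as $\varepsilon\,\det\begin{psmallmatrix}\nabla F(x^{*})-I & -I\\ \phi(Z^{*})+\phi(X^{*}) & \phi(X^{*})\end{psmallmatrix}$. I would then expand along the $-I$ block to reduce to $\det\bigl(N_s+N_t M\bigr)$ with $M=\nabla F(x^{*})-I$, $N_s=\phi(Z^{*})+\phi(X^{*})$, and $N_t=\phi(X^{*})$. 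The key verification, and the crux of this case, is that $N_s=\phi(Z^{*})+\phi(X^{*})$ is \emph{strictly} positive on the diagonal: on indices where exactly one coordinate vanishes one summand is $1$, and on the doubly-degenerate indices both summands equal $\tfrac14$ so the sum is $\tfrac12>0$; meanwhile $N_t=\phi(X^{*})\geq 0$. Hence Lemma~3.10 applies provided $\nabla F(x^{*})-I$ is a $\mathcal{P}_0$-matrix, giving the stated invertibility. I would close by remarking that this last hypothesis is exactly the price paid for dispensing with strict complementarity.
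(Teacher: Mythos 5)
Your proposal is correct and takes essentially the same route as the paper, which in fact gives no separate proof environment for this lemma: the computations embedded in the statement \emph{are} the proof, namely expansion along the last row/column to factor out $\varepsilon$, then Lemma~3.10 applied with $(N_{s},N_{t},M)=\bigl(\tfrac14 I,\tfrac14 I,\nabla F(x^{*})\bigr)$ in the first case and with $\bigl(\phi(Z^{*})+\phi(X^{*}),\,\phi(X^{*}),\,\nabla F(x^{*})-I\bigr)$ in the third, while the strict-complementarity case is delegated to the permutation/reduction argument of Lemma~\ref{22786633}, which you simply carry out directly.

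Two local slips are worth flagging, neither fatal. First, in the strict-complementarity case the principal minor that survives the elimination is indexed by $\{i : x^{*}_{i}\neq 0\}=\{i : z^{*}_{i}=0\}$, not by your $\mathbb{I}=\{i : z^{*}_{i}\neq 0\}$: the unit rows coming from $\phi(Z^{*})$ delete the corresponding columns of $\nabla F(x^{*})$, and those coming from $\phi(X^{*})$ delete columns of $-I$. Since a $\mathcal{P}$-matrix has \emph{all} principal minors positive, the conclusion is unchanged, and the paper itself is looser still, writing ``$\forall\,\mathbb{I}\subset\{1,\dots,n\}$'' (note also, as a caveat you share with the paper, that $F$ being a $\mathcal{P}$-function guarantees in general only that $\nabla F(x^{*})$ is a $\mathcal{P}_{0}$-matrix, so the positivity of the minor is the paper's assertion, not a gap introduced by you). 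Second, in the degenerate case the elementary operation that produces the displayed matrix is a block-\emph{column} operation (add the second block column to the first), not ``subtract the second block-row from the first'': the row operation you describe would give top blocks $\nabla F(x^{*})-\phi(Z^{*})$ and $-I-\phi(X^{*})$. Since you then work with the correct target matrix and verify the crucial point --- that $N_{s}=\phi(Z^{*})+\phi(X^{*})$ has diagonal entries $1$ or $\tfrac12$, hence is positive at any solution, while $N_{t}=\phi(X^{*})\geq 0$ --- the application of Lemma~3.10 and the final conclusion stand exactly as in the paper.
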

In the following, we focus our attention on the superlinear convergence rate of Algorithme 1.
\begin{theorem}
	(Theorem 6.9, \cite{Bonnans}). Let $ \mathbb{H}_{\theta}: \R^{2n+1} \to \R^{2n+1} $ be a continuously-differentiable function.\\
	\begin{enumerate}[label=(\roman*)]
		\item (Local analysis) Let $ \mathbb{X}^{*} $ be a regular zero of $  \mathbb{H}_{\theta} $. If  ~$~ \mathbb{X}^{0}$ is close enough to $ \bar{\mathbb{X}}$, then $ \zeta^{k}=1 $ for all $ k,$ and $  \mathbb{X}^{k}$  converge to  $ \mathbb{X}^{*}$ super-linearly (and we recover the standard Newton method).\\
		\item (Limit point) Let $ \mathbb{X}^{*} $ be a limit point of sequence $ \{\mathbb{X}^{k}\}.$  If  $~\nabla \mathbb{H}_{\theta}( \mathbb{X}^{*}) $  is invertible, then  $ \mathbb{X}^{*}$ is a regular zero of $ \mathbb{H}.$ If $ \mathbb{X}^{*} $ is a regular zero of  $ \mathbb{H}_{\theta}$, then $ \zeta^{k}=1 $ for $ k $ big enough and   $  \mathbb{X}^{k}$  converge to  $ \mathbb{X}^{*}$ super-linearly.\\
	\end{enumerate}
\end{theorem}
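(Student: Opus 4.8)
The plan is to recognize this as the classical local convergence theorem for a Newton iteration globalized by Armijo back-tracking, and to reproduce the self-contained argument (the statement is quoted verbatim from \cite{Bonnans}, so one may also simply invoke the cited result). Throughout I write $\Theta(\mathbb{X})=\tfrac12\Vert\mathbb{H}_{\theta}(\mathbb{X})\Vert^{2}$ for the merit function, so that $\nabla\Theta(\mathbb{X})=\nabla\mathbb{H}_{\theta}(\mathbb{X})^{T}\mathbb{H}_{\theta}(\mathbb{X})$, and I denote by $\mathbf{d}^{k}$ the full Newton direction solving $\mathbb{H}_{\theta}(\mathbb{X}^{k})+\nabla\mathbb{H}_{\theta}(\mathbb{X}^{k})\mathbf{d}^{k}=0$. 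The single algebraic identity that drives the whole proof is
\begin{equation*}
\nabla\Theta(\mathbb{X}^{k})^{T}\mathbf{d}^{k}=\mathbb{H}_{\theta}(\mathbb{X}^{k})^{T}\nabla\mathbb{H}_{\theta}(\mathbb{X}^{k})\mathbf{d}^{k}=-\Vert\mathbb{H}_{\theta}(\mathbb{X}^{k})\Vert^{2}=-2\,\Theta(\mathbb{X}^{k}).
\end{equation*}

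For part (i) I would first use that, at a regular zero $\mathbb{X}^{*}$, nonsingularity of $\nabla\mathbb{H}_{\theta}(\mathbb{X}^{*})$ together with continuity of $\nabla\mathbb{H}_{\theta}$ yields a neighborhood on which the Jacobian remains invertible with uniformly bounded inverse; hence $\mathbf{d}^{k}$ is well defined there and $\Vert\mathbf{d}^{k}\Vert=\bigO{\Vert\mathbb{H}_{\theta}(\mathbb{X}^{k})\Vert}$. The key estimate is the superlinear behaviour of the \emph{pure} Newton step: a first-order Taylor expansion gives $\mathbb{H}_{\theta}(\mathbb{X}^{k}+\mathbf{d}^{k})=\mathbb{H}_{\theta}(\mathbb{X}^{k})+\nabla\mathbb{H}_{\theta}(\mathbb{X}^{k})\mathbf{d}^{k}+\smallO{\Vert\mathbf{d}^{k}\Vert}=\smallO{\Vert\mathbf{d}^{k}\Vert}$, whence $\Theta(\mathbb{X}^{k}+\mathbf{d}^{k})=\smallO{\Theta(\mathbb{X}^{k})}$. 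I then test the Armijo condition of Step 4 with $\zeta=1$: using the identity above it reads $\Theta(\mathbb{X}^{k}+\mathbf{d}^{k})-\Theta(\mathbb{X}^{k})\le-2\tau\,\Theta(\mathbb{X}^{k})$, i.e. $-\Theta(\mathbb{X}^{k})+\smallO{\Theta(\mathbb{X}^{k})}\le-2\tau\,\Theta(\mathbb{X}^{k})$. Because $\tau<\tfrac12$ we have $2\tau<1$, so for $\mathbb{X}^{k}$ sufficiently close to $\mathbb{X}^{*}$ the $\smallO{\cdot}$ term is dominated and the inequality holds; thus the unit step is always accepted ($\zeta^{k}=1$), the iteration coincides with pure Newton, and superlinear convergence follows from the same Taylor estimate.

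For part (ii) I would first invoke the global convergence theorem established above (every limit point of $\{\mathbb{X}^{k}\}$ is a solution of NCP), which forces $\mathbb{H}_{\theta}(\mathbb{X}^{*})=0$; if in addition $\nabla\mathbb{H}_{\theta}(\mathbb{X}^{*})$ is invertible, then $\mathbb{X}^{*}$ is, by Definition 5.2, a regular zero. The remaining point is to upgrade ``limit point'' (convergence of a subsequence) to convergence of the whole tail: the standard local contraction for Newton's method near a regular zero furnishes a neighborhood $N$ of $\mathbb{X}^{*}$ on which the unit-step map $\mathbb{X}\mapsto\mathbb{X}+\mathbf{d}$ sends $N$ into itself and contracts the distance to $\mathbb{X}^{*}$. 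Since $\mathbb{X}^{*}$ is a limit point, some iterate $\mathbb{X}^{K}$ enters $N$, and by part (i) all subsequent steps are full Newton steps; hence the entire sequence stays in $N$ and converges superlinearly to $\mathbb{X}^{*}$, with $\zeta^{k}=1$ for $k$ large.

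The main obstacle is step (i), the acceptance of the unit step by the line search near a regular zero. Everything hinges on the interplay between the sufficient-decrease estimate $\Theta(\mathbb{X}^{k}+\mathbf{d}^{k})=\smallO{\Theta(\mathbb{X}^{k})}$ --- which uses only that $\mathbb{H}_{\theta}$ is $C^{1}$ with a boundedly invertible Jacobian near $\mathbb{X}^{*}$ --- and the strict constraint $\tau<\tfrac12$ hard-wired into the algorithm, which provides exactly the slack needed for $-1+\smallO{1}\le-2\tau$. For part (ii) the only delicate point is passing from subsequential to full convergence, handled by the local contraction argument above; and since the statement is quoted from \cite[Theorem~6.9]{Bonnans}, it may alternatively be taken as an immediate citation.
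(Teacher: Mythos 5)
Your proposal is correct, but it takes a genuinely different route from the paper: the paper does not prove this theorem at all. The statement is quoted verbatim as Theorem~6.9 of Bonnans' book, and the paper's entire ``proof'' is the one-line remark that one applies its Lemma~5.4 and Lemma~5.6 under some condition on $F$ --- i.e., the authors treat the Newton--Armijo convergence theory as a black box from the textbook and only gesture at verifying its hypothesis (regularity of the zero of $\mathbb{H}_{\theta}$) in their setting, via the global-convergence result (every limit point solves NCP, hence $\mathbb{H}_{\theta}(\mathbb{X}^{*})=0$) and the nonsingularity analysis of $\lim_{r\to 0}\nabla\mathbb{H}_{\theta}$. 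You instead reconstruct the textbook proof itself, and your reconstruction is sound: the identity $\nabla\Theta(\mathbb{X}^{k})^{T}\mathbf{d}^{k}=-2\Theta(\mathbb{X}^{k})$ is exactly what makes the Armijo test with $\zeta=1$ read $-\Theta(\mathbb{X}^{k})+o(\Theta(\mathbb{X}^{k}))\le-2\tau\,\Theta(\mathbb{X}^{k})$, the hypothesis $\tau<\tfrac12$ supplies precisely the needed slack, the uniform $o(\Vert\mathbf{d}^{k}\Vert)$ Taylor remainder is legitimate for a $C^{1}$ map on a compact neighborhood of the regular zero (yielding superlinear rather than quadratic convergence, consistent with the statement), and your contraction argument correctly upgrades subsequential to full convergence in part~(ii). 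Note also that your part~(ii) partially overlaps with the paper's intent, since you too invoke the global convergence theorem (the paper's Theorem~5.4) to conclude $\mathbb{H}_{\theta}(\mathbb{X}^{*})=0$ before checking regularity. What each approach buys: the paper's citation is economical and appropriate for a quoted result, but leaves the reader to consult Bonnans and leaves the one-line proof arguably garbled (it cites a ``Lemma 5.4'' that is in fact a theorem, and does not spell out which condition on $F$ is meant); your version is self-contained, makes visible where the algorithmic constants ($\tau<\tfrac12$, back-tracking by powers of $\varrho$) enter, and would let the theorem stand without the external reference, at the cost of a page of standard material.
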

\begin{proof}
We apply Lemma 5.4 and Lemma 5.6 under some condition on $ F $.
\end{proof}
The next lemma measures the "additional coercivity" effect of the smoothing.
\begin{lemma}
	Assume $ F $ is a $ \mathcal{P}_{0}$-function, then 
	\begin{enumerate}[label=(\roman*)]
		\item  $  \mathbb{H}_{\theta} $ is a $ \mathcal{P}$-function.
		\item If  $  \mathbb{H}_{\theta}(\mathbf{X},0) $ exists then it is a $ \mathcal{P}_{0}$ function.
	\end{enumerate}
\end{lemma}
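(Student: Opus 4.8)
The plan is to reduce both statements to a principal–minor property of the Jacobian $\nabla_{\mathbb{X}}\mathbb{H}_{\theta}$ and then invoke the classical Mor\'e--Rheinboldt characterization: a continuously differentiable map is a $\mathcal{P}_{0}$-function precisely when its Jacobian is a $\mathcal{P}_{0}$-matrix at every point, and it is a $\mathcal{P}$-function whenever its Jacobian is a $\mathcal{P}$-matrix at every point. Since $F$ is $C^{1}$ and a $\mathcal{P}_{0}$-function, $\nabla F(x)$ is a $\mathcal{P}_{0}$-matrix everywhere, and this is the only information about $F$ that the argument uses.

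For part (ii) I would start from the limit Jacobian computed in the previous lemma,
\[
\lim_{r\to 0}\nabla_{\mathbb{X}}\mathbb{H}_{\theta}(\mathbf{X}^{*},r)=\begin{pmatrix}\nabla F(x^{*})&-I&0\\ \phi(Z^{*})&\phi(X^{*})&0\\ 0&0&\varepsilon\end{pmatrix},
\]
whose diagonal blocks $\phi(Z^{*}),\phi(X^{*})$ are nonnegative diagonal matrices with entries in $\{0,\tfrac14,1\}$. Because the last row and column equal $(0,\dots,0,\varepsilon)$, every principal minor factorises as $\varepsilon$ (or $1$) times a principal minor of $B_{0}:=\begin{pmatrix}\nabla F(x^{*})&-I\\ \phi(Z^{*})&\phi(X^{*})\end{pmatrix}$, so it suffices to show $B_{0}$ is a $\mathcal{P}_{0}$-matrix. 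I would do this with the Fiedler--Pt\'ak vector test: for $w=(p,q)\neq 0$ I produce an index $k$ with $w_{k}\neq 0$ and $w_{k}(B_{0}w)_{k}\geq 0$, splitting into the case $p=0$ (a $q$-block index works since $\phi(X^{*})\geq 0$) and $p\neq 0$ (the index $j$ furnished by the $\mathcal{P}_{0}$-property of $\nabla F$ works in the $F$-block when $p_{j}q_{j}\leq 0$, and in the $G$-block when $p_{j}q_{j}>0$). This yields (ii).

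For part (i) the same block reduction applies with $r>0$: now $D_{a},D_{b}$ have strictly positive diagonal entries, since $a_{i}=\big((z_{i}+r)/(x_{i}+z_{i}+2r)\big)^{2}>0$ and likewise $b_{i}$, the map $G_{r}^{1}$ is strictly increasing in each argument, and the $(2n+1,2n+1)$ entry equals $2r+\varepsilon>0$. I would first use Lemma 3.10 to guarantee that $D_{a}+\nabla F\,D_{b}$ is nonsingular, so that $\nabla_{\mathbb{X}}\mathbb{H}_{\theta}$ is a \emph{nonsingular} $\mathcal{P}_{0}$-matrix, and then try to upgrade the weak inequalities to strict ones by exploiting the strict monotonicity of $G_{r}^{1}$ and the coercive term $2r+\varepsilon$, pairing the $G$-block (strictly increasing in $z$) with the $z$-variable and the scalar equation with the $r$-variable.

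This last upgrade is where I expect the genuine difficulty to lie, and it must be flagged. A $\mathcal{P}$-function need not have a $\mathcal{P}$-matrix Jacobian (already $x\mapsto x^{3}$ shows this), so one cannot deduce the strict $\mathcal{P}$-property from a Jacobian argument alone; indeed the diagonal of $\nabla F$ may vanish, so $\nabla_{\mathbb{X}}\mathbb{H}_{\theta}$ is in general only a $\mathcal{P}_{0}$-matrix even for $r>0$. The strictness therefore has to be produced at the level of the definition, and the delicate situation is a pair of points differing only in coordinates $j$ with $\Delta z_{j}=0$ while $\Delta x_{j}\neq 0$ and the $\mathcal{P}_{0}$-property of $F$ contributes a flat ($=0$) term at $j$: there the strict monotonicity of the smoothing is not activated. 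Controlling this degenerate case — equivalently, making precise the ``additional coercivity'' that the regularization $r>0$, $\varepsilon>0$ is meant to inject — is the crux on which part (i) stands, and I would spend most of the effort there.
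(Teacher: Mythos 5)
Your proposal does not prove part (i) — you flag the strictness upgrade as ``the crux'' and leave it open — and this is exactly where the lemma's content lies, so the gap is genuine. The missing idea is that the paper never argues through the Jacobian at all: it works directly from the definition, and the strictness comes from the strict monotonicity of $\psi$ and $\psi^{-1}$ in \emph{each} argument of the smoothing separately. Concretely, given two distinct points differing in the first $2n$ coordinates, the $\mathcal{P}_{0}$-property of $F$ furnishes an index $i$ with $X_{i}\neq Y_{i}$ (say $X_{i}>Y_{i}$) and $F_{i}(X)\geq F_{i}(Y)$; then $\psi(X_{i}/r)<\psi(Y_{i}/r)$ \emph{strictly} while $\psi(F_{i}(X)/r)\leq\psi(F_{i}(Y)/r)$, so the sum strictly decreases, and applying the strictly decreasing $\psi^{-1}$ yields $G_{r}(X_{i},F_{i}(X))>G_{r}(Y_{i},F_{i}(Y))$. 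The degenerate case you single out as the obstruction — $\Delta x_{j}\neq 0$ with a flat ($=0$) contribution from $F$ — is precisely the case this mechanism handles automatically: strictness is activated by $\Delta x_{j}\neq 0$ alone, because $G_{r}$ is strictly increasing in its first argument; no coercivity from $\varepsilon$ or $r$ is needed in this block. The only place the regularization term enters is the residual case where the points agree in all first $2n$ coordinates and differ in $r$: there the negative-part terms cancel and $(r_{1}-r_{2})(r_{1}^{2}+\varepsilon r_{1}-r_{2}^{2}-\varepsilon r_{2})>0$ since $t\mapsto t^{2}+\varepsilon t$ is strictly increasing for $t\geq 0$. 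Your instinct that ``the strictness has to be produced at the level of the definition'' was correct, but the proposal never produces it.

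Part (ii) also diverges from the paper, and your route has its own obstruction. The paper simply lets $r\searrow 0$ in the strict inequality of part (i): a pointwise limit of $\mathcal{P}$-functions is a $\mathcal{P}_{0}$-function (with finitely many indices one may fix the index along a subsequence), which requires no smoothness of the limit map. Your route — limit Jacobian plus the Mor\'e--Rheinboldt characterization — needs $\mathbb{H}_{\theta}(\cdot,0)$ to be continuously differentiable everywhere, and this fails for $\theta^{1}$ at points with $x_{i}=z_{i}=0$, where $G_{0}^{1}(x_{i},z_{i})=x_{i}z_{i}/(x_{i}+z_{i})$ admits no continuous derivative; moreover $\lim_{r\to 0}\nabla_{\mathbb{X}}\mathbb{H}_{\theta}(\mathbf{X}^{*},r)$ from Lemma 5.6 is a limit of Jacobians at a fixed point and need not coincide with a Jacobian of the limit map at such nondifferentiable points. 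Your Fiedler--Pt\'ak verification that the limit block matrix is a $\mathcal{P}_{0}$-matrix is correct as a matrix statement, but it cannot be converted into the function statement without the differentiability you lack. (In fairness, the paper's own proof is terse — it tacitly works with the reduced component $G_{r}(X_{i},F_{i}(X))$, i.e.\ with $z$ eliminated via $z=F(x)$, and treats only pure-$r$ differences in the last coordinate — but the monotonicity mechanism it exploits is exactly what your attempt is missing.)
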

\begin{proof}
	(i) Let $ X, ~Y $ be two distinct vectors of $ \R^{2n} $. Since $ F $ is a $  \mathcal{P}_{0} $ function there exists an index $ i\in \{1, ...,2n \}$ such that $ X_{i} \ne Y_{i} $ and $ (X_{i}-Y_{i})(F_{i}(X)-F_{i}(Y))\geq 0. $ Without loss of generality, we can suppose that $ X_{i}>Y_{i} $ and $ F_{i}(X)>F_{i}(Y).$ \\
	Since $ \psi $ and $ \psi^{-1} $ are decreasing we obtain consecutively that for any $ r>0,$ 
	\begin{equation} \label{712}
	\psi(X_{i}/r)+\psi(F_{i}(X)/r)<\psi(Y_{i}/r)+\psi(F_{i}(Y)/r),
	\end{equation}
	so 
	\begin{equation*}
	G_{r}(X_{i},F_{i}(X))>	G_{r}(Y_{i},F_{i}(Y)).
	\end{equation*}
	Hence,  $  \mathbb{H}_{\theta} $ is a $  \mathcal{P} $ function. \\
	We will now deal with the case where $ X_{i}=Y_{i}, ~~ \forall i <2n+1.$ For $ i=2n+1, $ we can suppose that $ X_{2n+1}>Y_{2n+1} $ 
	\begin{equation*}
	\begin{split}
	(X_{2n+1}-Y_{2n+1})( \mathbb{H}_{\theta}(X)_{n+1}-\mathbb{H}_{\theta}(Y)_{n+1}=&(r_{1}-r_{2})\left(\frac{1}{2} \Vert x^{-}\Vert^{2}+\frac{1}{2} \Vert z^{-}\Vert^{2}+r_{1}^{2}+ \varepsilon r_{1}-\frac{1}{2} \Vert x^{-}\Vert^{2}-\frac{1}{2} \Vert z^{-}\Vert^{2}-r_{2}^{2}- \varepsilon r_{2}\right)\\
	& (r_{1}-r_{2})(r_{1}^{2}+\varepsilon r_{1}-r_{2}^{2}-\varepsilon r_{2})>0
	\end{split}
	\end{equation*}
	Hence,  $  \mathbb{H}_{\theta} $ is a $  \mathcal{P} $ function.\\
	(ii) If $  \mathbb{H}_{\theta}(\mathbb{X},0) $ exists, passing to the limit in \eqref{712} as $ r \searrow 0, $ we obtain that $  \mathbb{H}_{\theta}(\mathbb{X},0) $ is a $ \mathcal{P}_{0}$ function.
\end{proof}
Now we would like to study the asymptotic behavior of the Jacobian matrix of our method with the Jacobian matrix of interior-point methods when $  r  $ goes to $ 0 $ and we need a lemma that is used to prove our main result.
\begin{lemma} \label{22786633}
	We consider the following system 
	\begin{equation} 
	\begin{array}{llllll}
	Z.X=0 \\
	Z \geq 0,~~ X\geq 0 ,
	\end{array}
	\end{equation}		
	where $ Z=\text{diag}(z) $ and  $ X=\text{diag}(x)$.\\ Assume that $ Z,~X$ are strictly complementary (i.e. $ \exists~ \alpha >0 $ such that $ Z+X>\alpha$). Then $ J $ is singular if and only if $ T $ is singular, where 
	\begin{equation*}
	J=\left(\begin{array}{cc}
	\nabla F(x)&-I\\
	Z& X
	\end{array}\right), \quad \text{and} \quad  
	T=\left(\begin{array}{cc}
	\nabla F(x)&-I\\
	\phi(Z)& \phi(X)
	\end{array}\right),
	\end{equation*}
 where $\phi(.)$ is defined in Lemma 5.6, here $ \phi $ operates componentwise on $ Z $ (resp. on $ X $).
\end{lemma}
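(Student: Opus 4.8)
The plan is to exploit the structural consequence of strict complementarity: at every index $ i $, exactly one of $ x_i, z_i $ is positive and the other vanishes. Indeed, $ Z.X=0 $ forces $ x_iz_i=0 $, while $ Z+X>\alpha $ forces $ x_i+z_i>\alpha>0 $, so each index belongs to exactly one of
\begin{equation*}
\mathcal{A}=\{\,i : x_i>0,\ z_i=0\,\},\qquad \mathcal{B}=\{\,i : z_i>0,\ x_i=0\,\},
\end{equation*}
and the degenerate case $ x_i=z_i=0 $ (the only one producing the value $ \tfrac14 $ in the definition of $ \phi $) never occurs. Reading off the definition of $ \phi $, this gives $ \phi(Z)_{ii}=0,\ \phi(X)_{ii}=1 $ on $ \mathcal{A} $ and $ \phi(Z)_{ii}=1,\ \phi(X)_{ii}=0 $ on $ \mathcal{B} $; in particular $ \phi(Z) $ and $ \phi(X) $ share exactly the diagonal supports of $ Z $ and $ X $, respectively.

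First I would introduce the diagonal matrix $ D=\text{diag}(d_1,\dots,d_n) $ defined by $ d_i=1/x_i $ for $ i\in\mathcal{A} $ and $ d_i=1/z_i $ for $ i\in\mathcal{B} $. Since $ x_i>0 $ on $ \mathcal{A} $ and $ z_i>0 $ on $ \mathcal{B} $, all entries $ d_i $ are strictly positive, so $ D $ is invertible. A componentwise verification then gives $ DZ=\phi(Z) $ and $ DX=\phi(X) $: on $ \mathcal{A} $ one has $ (DZ)_{ii}=d_i\cdot 0=0=\phi(Z)_{ii} $ and $ (DX)_{ii}=x_i/x_i=1=\phi(X)_{ii} $, and symmetrically on $ \mathcal{B} $.

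Next I would package these identities into a single block factorization. Left-multiplying $ J $ by $ \text{diag}(I,D) $ leaves the top block rows untouched and rescales the bottom block rows, yielding
\begin{equation*}
\begin{pmatrix} I & 0 \\ 0 & D \end{pmatrix}
\begin{pmatrix} \nabla F(x) & -I \\ Z & X \end{pmatrix}
=\begin{pmatrix} \nabla F(x) & -I \\ DZ & DX \end{pmatrix}
=\begin{pmatrix} \nabla F(x) & -I \\ \phi(Z) & \phi(X) \end{pmatrix}=T.
\end{equation*}
Taking determinants gives $ \det T=\det(D)\,\det J $ with $ \det D=\prod_{i}d_i\neq 0 $, so $ \det J=0 $ if and only if $ \det T=0 $, which is exactly the asserted equivalence.

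There is no genuinely hard analytic step here: the argument reduces to the observation that strict complementarity lets one pass from $ (Z,X) $ to $ (\phi(Z),\phi(X)) $ by one invertible positive diagonal rescaling of the lower block rows. The only place requiring care is the bookkeeping over $ \mathcal{A} $ and $ \mathcal{B} $ when checking $ DZ=\phi(Z) $ and $ DX=\phi(X) $, together with the remark that the value $ \tfrac14 $ of $ \phi $ is excluded by the hypothesis $ Z+X>\alpha $; the block $ \nabla F(x) $ plays no role and is merely carried along unchanged.
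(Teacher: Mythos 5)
Your proof is correct, and while it rests on exactly the same structural observation as the paper's, it packages that observation more cleanly. Both arguments hinge on the fact that under strict complementarity each index $i$ satisfies exactly one of $x_i>0,\ z_i=0$ or $z_i>0,\ x_i=0$ (so the degenerate value $\tfrac14$ of $\phi$ never arises), which makes the $i$-th bottom row of $J$ a strictly positive scalar multiple of the corresponding bottom row of $T$. The paper exploits this by permuting rows and columns to display the positive blocks $Z_1$ and $X_2$, then expanding both determinants as $\det(J_\sigma)=\pm\prod_{i\in\I_1}x_i\prod_{i\in\I_2}z_i\,\det(C)$ and $\det(T_\sigma)=\pm\prod_{i\in\I_1}\phi(x_i)\prod_{i\in\I_2}\phi(z_i)\,\det(C)$ for a common but unspecified matrix $C$, concluding since the scalar prefactors are nonzero. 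Your factorization
\begin{equation*}
\begin{pmatrix} I & 0\\ 0 & D\end{pmatrix} J = T,
\qquad D=\mathrm{diag}(d_1,\dots,d_n),\quad d_i=\frac{1}{x_i}\ \text{on}\ \mathcal{A},\quad d_i=\frac{1}{z_i}\ \text{on}\ \mathcal{B},
\end{equation*}
compresses all of that bookkeeping into the single exact identity $\det T=\det(D)\,\det J$ with $\det D>0$, which dispenses with the permutation $\sigma$, the sign ambiguity $\pm$, and the appeal to a ``certain matrix $C$'', and in fact delivers slightly more than the lemma asserts, namely a quantitative relation between the two determinants rather than a bare singular/nonsingular equivalence. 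The only step requiring care --- the componentwise verification that $DZ=\phi(Z)$ and $DX=\phi(X)$ under the paper's definition of $\phi$ --- you carry out explicitly on both index sets $\mathcal{A}$ and $\mathcal{B}$, so the argument is complete as written.
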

\begin{proof}
	By the strict complementarity hypothesis, we range the rows and the columns of  $ J $ and  $ T$ as follows   
	\begin{displaymath}
	J_{\sigma}=\left(\begin{array}{cc}
	\nabla F(x)_{\sigma}&-I_{\sigma}\\
	\left(\begin{array}{cc}
	Z_{1}&0\\
	0& 0
	\end{array}\right)& \left(\begin{array}{cc}
	0&0\\
	0& X_{2}
	\end{array}\right)
	\end{array}\right),
	\end{displaymath}
	where $ X_{2} > 0$ and $ Z_{1} >0,$ and 
	\begin{displaymath}
	(T)_{\sigma}=\left(\begin{array}{cc}
	\nabla F(x)_{\sigma}&-I_{\sigma}\\
	\left(\begin{array}{ccc}
	\begin{array}{ccc}
	1&        & \\
	& \ddots &  \\
	&        & 1 \\
	\end{array}&        & 0\\
	&  &  \\
	0 &        & 0 \\
	\end{array}\right)& \left(\begin{array}{ccc}
	
	0&        & 0\\
	& \ddots &  \\
	0&        & \begin{array}{ccc}
	
	1&        & 0\\
	& \ddots &  \\
	0&        & 1 \\
	\end{array} \\
	\end{array}\right)
	\end{array}\right).
	\end{displaymath}
	The determinant of the two matrices  $ J_{\sigma} $ and $ (T)_{\sigma} $ are equal to 
	\begin{equation*}
	\text{det}(J_{\sigma})= \left\vert \begin{array}{cc}
	\nabla F(x)_{\sigma}&-I_{\sigma}\\
	\left(\begin{array}{cc}
	Z_{1}&0\\
	0& 0
	\end{array}\right)& \left(\begin{array}{cc}
	0&0\\
	0& X_{2}
	\end{array}\right)
	\end{array}\right \vert =\pm \prod_{i \in \I_{1}}^{} x_{i} \prod_{i \in \I_{2}}^{} z_{i}~~ \text{det}(C),
	\end{equation*}
	\begin{equation*}
	\text{det}(T_{\sigma})=\left \vert\begin{array}{cc}
	\nabla F(x)_{\sigma}&-I_{\sigma}\\
	\left(\begin{array}{ccc}
	\begin{array}{ccc}
	1&        & \\
	& \ddots &  \\
	&        & 1 \\
	\end{array}&        & 0\\
	&  &  \\
	0 &        & 0 \\
	\end{array}\right)& \left(\begin{array}{ccc}
	0&        & 0\\
	& \ddots &  \\
	0&        & \begin{array}{ccc}
	1&        & 0\\
	& \ddots &  \\
	0&        & 1 \\
	\end{array} \\
	\end{array}\right)
	\end{array}\right\vert=\pm \prod_{i \in \I_{1}}^{} \phi(x_{i}) \prod_{i \in \I_{2}}^{} \phi(z_{i})~~ \text{det}(C),
	\end{equation*}
	where $ C $ is a certain matrix, $ \I_{1}= \{i ~~|~~ x_{i}> 0\}$  and $ \I_{2}= \{i ~~|~~ z_{i}> 0 \}$. Since
	\begin{equation*}
	\pm \prod_{i \in \I_{1}}^{} x_{i} \prod_{i \in \I_{2}}^{} z_{i} ~~~\quad\text{and} ~~~ \quad\prod_{i \in \I_{1}}^{} \phi(x_{i}) \prod_{i \in \I_{2}}^{} \phi(z_{i}),
	\end{equation*}
	are nonzeros, then we can conclude that $ J $ and $ T $ are invertibles and singulars at the same time.
\end{proof}
\begin{theorem} \label{765699}
	Suppose that $ \mathbf{X}^{*}=(x^{*}, z^{*}) $ is a solution of NCP which satisfies the strict complementarity (i.e. $ \exists~ \alpha >0 $  such that $ x^{*}_{i}+z^{*}_{i}>\alpha$, $\forall i \in \{1, ..., n\} $), and    $ \nabla H_{0}(\mathbf{X}^{*}) $ define by \eqref{0854356}  (the Jacobian matrix of the Interior-Point Method) is invertible. Then  $ \lim\limits_{\substack{r \rightarrow 0}}\nabla_{\mathbb{X}}	\mathbb{H}_{\theta}(\mathbf{X}^{*},r)  $ is invertible, i.e. the two Jacobian matrices are singular or nonsigular at the same time.
\end{theorem}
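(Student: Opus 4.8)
The plan is to reduce the determinant of $\lim_{r \to 0}\nabla_{\mathbb{X}}\mathbb{H}_{\theta}(\mathbf{X}^{*},r)$ to that of the $2n \times 2n$ matrix $T$ appearing in Lemma \ref{22786633}, and then transfer the nonsingularity statement from the interior-point Jacobian $\nabla H_{0}(\mathbf{X}^{*})$ via that lemma. First I would record the explicit form of the limiting Jacobian furnished by the preceding lemma. Under the strict complementarity hypothesis, no index $i$ satisfies both $x_{i}^{*}=0$ and $z_{i}^{*}=0$; consequently the exceptional value $\tfrac{1}{4}$ never occurs in the diagonal matrices $\phi(Z^{*})$ and $\phi(X^{*})$, so these collapse precisely to the $\{0,1\}$-valued diagonal matrices used in Lemma \ref{22786633}.

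Next I would exploit the block structure of the limiting matrix. Its last column is $(0_{n\times 1},\,0_{n\times 1},\,\varepsilon)^{\text{T}}$ and its last row is $(0_{1\times n},\,0_{1\times n},\,\varepsilon)$, so a cofactor expansion along that row (or column) gives
\begin{equation*}
\det\Big(\lim_{r \to 0}\nabla_{\mathbb{X}}\mathbb{H}_{\theta}(\mathbf{X}^{*},r)\Big)=\varepsilon\,\det(T),\qquad T=\begin{pmatrix}\nabla F(x^{*})&-I\\ \phi(Z^{*})&\phi(X^{*})\end{pmatrix}.
\end{equation*}
Because $\varepsilon>0$, the limiting Jacobian is nonsingular if and only if $T$ is nonsingular.

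Finally, observe that the interior-point Jacobian $\nabla H_{0}(\mathbf{X}^{*})$ given by \eqref{0854356} is exactly the matrix $J$ of Lemma \ref{22786633}, with $Z^{*}=\text{diag}(z^{*})$ and $X^{*}=\text{diag}(x^{*})$. Since strict complementarity is assumed, Lemma \ref{22786633} applies verbatim and asserts that $J$ and $T$ are singular (equivalently, nonsingular) at the same time. Chaining the equivalences, $\nabla H_{0}(\mathbf{X}^{*})$ is invertible $\iff T$ is invertible $\iff \varepsilon\,\det(T)\neq 0 \iff \lim_{r\to 0}\nabla_{\mathbb{X}}\mathbb{H}_{\theta}(\mathbf{X}^{*},r)$ is invertible, which is the asserted conclusion.

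The argument is essentially determinant bookkeeping once Lemma \ref{22786633} is available, so I do not anticipate a genuine obstacle. The one point requiring care is matching the two occurrences of $\phi$: one must verify that the piecewise definition of $\phi$ from the limiting-Jacobian lemma degenerates, under strict complementarity, to the same $\{0,1\}$ pattern that governs the matrices in Lemma \ref{22786633}, so that a single matrix $T$ controls both determinants and the equivalence transfers cleanly.
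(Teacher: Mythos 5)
Your proposal is correct and follows essentially the same route as the paper: evaluate the limiting Jacobian from Lemma 5.6 at the solution (where $x^{-}=z^{-}=0$), expand the determinant to obtain $\varepsilon\,\det(T)$, and invoke Lemma \ref{22786633} to transfer nonsingularity between $T$ and the interior-point Jacobian $J=\nabla H_{0}(\mathbf{X}^{*})$. Your explicit remark that strict complementarity rules out the $\tfrac{1}{4}$ entries of $\phi$, so the two occurrences of $\phi$ agree, is a point the paper leaves implicit and is handled correctly here.
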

\begin{proof}
	
	In view of Lemma 5.6, and thanks to the assumption $ \mathbf{X}^{*}=(x^{*},z^{*}) $ is a solution of NCP, we have $ x^{*}\geq 0 $ and  $ z^{*}\geq 0 $, so that $ x^{-}=z^{-}=0. $ Hence 
	\begin{equation*}
	\lim_{r \to 0} \text{det} ~ (\nabla_{\mathbb{X}}	\mathbb{H}_{\theta}(\mathbf{X}^{*},r))= \left\vert \begin{bmatrix}
	\begin{pmatrix}
	\nabla F(x^{*}) & -I_{n \times n}  \\
	\phi(Z^{*}) & \phi(X^{*}) \\
	\end{pmatrix}
	&        
	\begin{matrix}
	0 \\[3mm]
	0 \\[3mm]
	\end{matrix}
	\\ 
	0\quad  \quad  \quad 0      &      \varepsilon  \\
	\end{bmatrix}\right\vert= \varepsilon \left\vert
	\left(\begin{array}{cc}
	\nabla F(x^{*})&-I_{n \times n }\\
	\phi(Z^{*})& \phi(X^{*})
	\end{array}\right) \right 
	\vert,
	\end{equation*}
	where $\phi(.)$ is defined in Lemma 5.6,~ $ Z^{*}=\text{diag}(z^{*}) $ and $ X^{*}=\text{diag}(x^{*})$.\\
	From Lemma \eqref{22786633}, we conclude that if  $ \nabla_{\mathbf{X}}H_{0}(\mathbf{X}^{*})$ is invertible  then $\lim\limits_{\substack{r \rightarrow 0}}\nabla_{\mathbb{X}}\mathbb{H}_{\theta}(\mathbf{X}^{*},r)  $ is invertible. This means, that if the Interior Point Method converges our method converges.\\	
Hypothesis $ H_{0}~ (F ~ \text{is a} ~ \mathcal{P}_{0}\text{-function}) $ assures us that our method is well defined and the Theorem 5.10 shows that the domain of convergence of our method is at least as large as that of the interior-point methods. \end{proof}
\section{Numerical Experiments and Applications}
In this section, we present some numerical experiments for the two smoothing functions. Our aim is just to verify the theoretical assertions for these two "extreme" cases.\\
 First, we study eight test problems with various sizes and characteristics. Then, we present a comparison on some randomly generated problems of our method and other approaches that have been suggested recently in \cite{El Ghami, FB}. We also present numerical results for  a concrete example. All the codes are written in MATLAB 2020R and run in the system of Windows 10 with PC i5 8-th Gen and 16.00 GB RAM. We take the precision $ \varepsilon=10^{-9}$ (the termination criterion). If the number of iteration is more than $ 1000 $, or the obtained step size at some iteration is less than $ 10^{-5}$, we consider that the algorithm fails.
\begin{example}
We consider eight test problems (that can be found in \cite{18, 19, 20, 22, 23}) with various sizes and characteristics. In some cases,  $F$ is monotone or strongly monotone whereas others can have a non-connected solution set, in this case, $ F $ is at most a $\mathcal{P}_{0}$ function.\\
 A precise description of each test problem is given in the appendix. We present in the appendix the numerical results obtained by the well-known projection iterative method (see  \cite{24}) when it is used exactly in the same conditions.
\begin{table}[H]
	\caption{Results for $ \theta_{1} $ and $ \theta_{2} $ }
	\begin{center}
		\begin{tabular}{ccccccc}
			\hline
			\hline
			  Pb & size &   Iter $  $&  Opt.& Feas.& cpu time(s) &r \\
			  & & $ (\theta_{1},~\theta_{2}) $&$ (\theta_{1},~\theta_{2}) $&$ (\theta_{1},~\theta_{2}) $&$ (\theta_{1},~\theta_{2}) $&$ (\theta_{1},~\theta_{2}) $\\
			\hline
			\hline 
			P1  &  10 & (114,  47)  &(7.58e-08, 7.66e-10) & (0, 8.23e-08 )  & (0.04, 0.0073)& (2.96e-04, 0.0014) \\
			\hline
			  &    100  & (134, 65 ) &  (9.31e-09,  1.15e-12) &  (0, 1.51e-09)  & (0.14, 0.07)&(1.06e-04, 9.21e-04)  \\
			  \hline
			 &  500 & (148,  66)  & (1.87e-09, 6.39e-13)&  (0, 4.28e-09)   &  (5.24, 2.52)& (4.90e-05, 8.95e-04) \\
			 \hline
			 & 1000& (153, 68) &  (1.05e-09, 7.82e-11) & (0, 1.04e-06) &(25.30, 11.41)&(3.70e-05, 0.0012)  \\
			 \hline
			 \hline
			 P2  &  10 & (116, 47)  & (7.53e-08, 7.66e-10) & (0, 8.23e-08)   & (0.01, 0.02)& (2.83e-04, 1.41e-03)  \\
			 \hline
			 &    100  & (133, 74) &  (7.64e-09, 2.64e-10) & (0, 3.47e-07)  & (0.19, 0.10 )&(8.74e-05, 0.0013) \\
			 \hline
			 &  500 & (147, 84)  &  (1.60e-09, 1.29e-09)& (0, 8.62e-06)   &  (5.25, 3.14)&(4.01e-05, 0.0014) \\
			 \hline
			 & 1000& (153, 115) &  (8.22e-10, 2.25e-12) & (0, 3.02e-08) & (24.47, 19.89)&(2.86e-05, 9.53e-04 )  \\
		 \hline
		 \hline
		P3  &  10 & (14, 16)  & (3.46e-07, 1.65e-16) & (0, 3.99e-15)  & (0.006, 0.02)& (0.001, 4.49e-04)  \\
		\hline
		&    100  & (108, 44) &  (4.99e-07, 3.63e-17) &  (0, 4.75e-18)  & (0.22, 0.05)& (0.0014, 0.0042)  \\
		\hline
		&  500 & (353,140)  &  (7.58e-07, 3.97e-08)& (9.68e-10, 7.54e-08)  &  (35.03, 5.14)&(0.0011, 0.002) \\
		\hline
		& 1000& (675, 265) &  (8.94e-07, 2.32e-07 ) &  (1.31e-09, 1.58e-08)& (91.41, 24.77)& (0.0011, 0.002 ) \\
		 \hline
		 \hline
		P4  &  4 & (53, 58)  & (2.97e-07, 1.20e-07) & (1.24e-04,  4.19e-05)   & (0.008, 0.083)& (6.06e-04, 1.72e-04)  \\
		\hline
		\hline
		P5&   4  & (16, 14) &  (3.02e-07, 1.92e-07) &  (0, 3.84e-07 )  &(0.003, 0.009)&(0.0026, 0.018)  \\
		\hline
		\hline
		P6&   7  & (10, 13) &  (1.06e-07, 7.16e-08) & (0, 0) & (0.1264, 0.0044)& (0.0016, 1.33e-04)  \\
		\hline
		\hline
			P7&   5  & (33, 30) &  (2.23e-08, 1.16e-07 ) &  (0, 3.44e-11)  & (0.011, 0,016)& (0.004, 0.003)  \\
			\hline
			\hline
				P8&   10  & (65, 45) &  (7.21e-08, 2.27e-08) &  (1.34e-09, 5.18e-08)  & (0.18, 0.16)& (0.0018, 3.76e-04)  \\
			\hline
			\hline 
		\end{tabular}
		\label{tab5}
	\end{center}
\end{table}
 In this table, $\mathbf{Size}$ stands for the number of variables, $\mathbf{Iter}  $ corresponds to the total number of jacobian evaluations, $\mathbf{Opt.}$ and $\mathbf{Feas.}$ correspond to the following optimality and feasibility measures
 \begin{equation*}
 \text{Opt.}:=\max_{1\leq i\leq n} |x_{i}F(x_{i})|\quad \text{and} \quad \text{Feas.}:= \|\min(x,~0)\|_{1}+\|\min(F(x),~0)\|_{1}.
 \end{equation*}
 The results clearly show that our methods are efficient, competitive and superior to the projection one (the results of the projection method are given in the appendix). We also remark that the second smoothing function is much more efficient and powerful than the first one.
\end{example}
\begin{example}
	This example is described in \cite{22, 23}. The coorresponding function $ F(x) $ is of the form:
	\begin{equation*}
	 F(x)=(AA^{T}+B+D)x+q,
	\end{equation*}
	where the matrices $ A,~B $ and $ D $ are randomly generated as: any entry of the square $ n\times n $ matrix $ A $ and of the $ n \times n $ skew-symmetric matrix $ B $ is uniformly generated from $ ] -5,~5 [,$ and any entry of the diagonal matrix $ D $ is uniformly generated  from $ ] 0,~3 [.$ The vector $ q $ is uniformly generated from $ ] -500,~0 [.$ \\
	The matrix $ AA^{T}+B+D $ is a positive definite and the function $ F $ is strongly monotone. We used the M-files proposed in \cite{22} to generate $ A,~B,~D $ and $ q.$\\
	In this example, we will compare our methods already mentioned in sections 3 and 4, named: Algorithm 1 ($ \theta_{1} $), Algorithm 1 ($ \theta_{2} $) to some other methods (Newton-Min method (Min-Alg), Fischer-Burmeister method \cite{FB} (FB-Alg) and the classical interior-point method \cite{El Ghami} (IPM-Alg)). In order to complete this experiment, we propose the performance profiles, developed by E. D. Dolan and J. J. Moré \cite{26}, as a tool for the comparative analysis of these methods. \\
	We set "$ n_{s}=5 $" as the number of methods and we have chosen "$ n_{p}=100$" (problems to be tested). We are interested in the comparison of the computation time and the number of iterations.

 \begin{figure}[H]\label{fig1}
	\begin{minipage}[H]{.46\linewidth}
		\begin{center}
			\includegraphics[width=8.5cm,height=6.5cm]{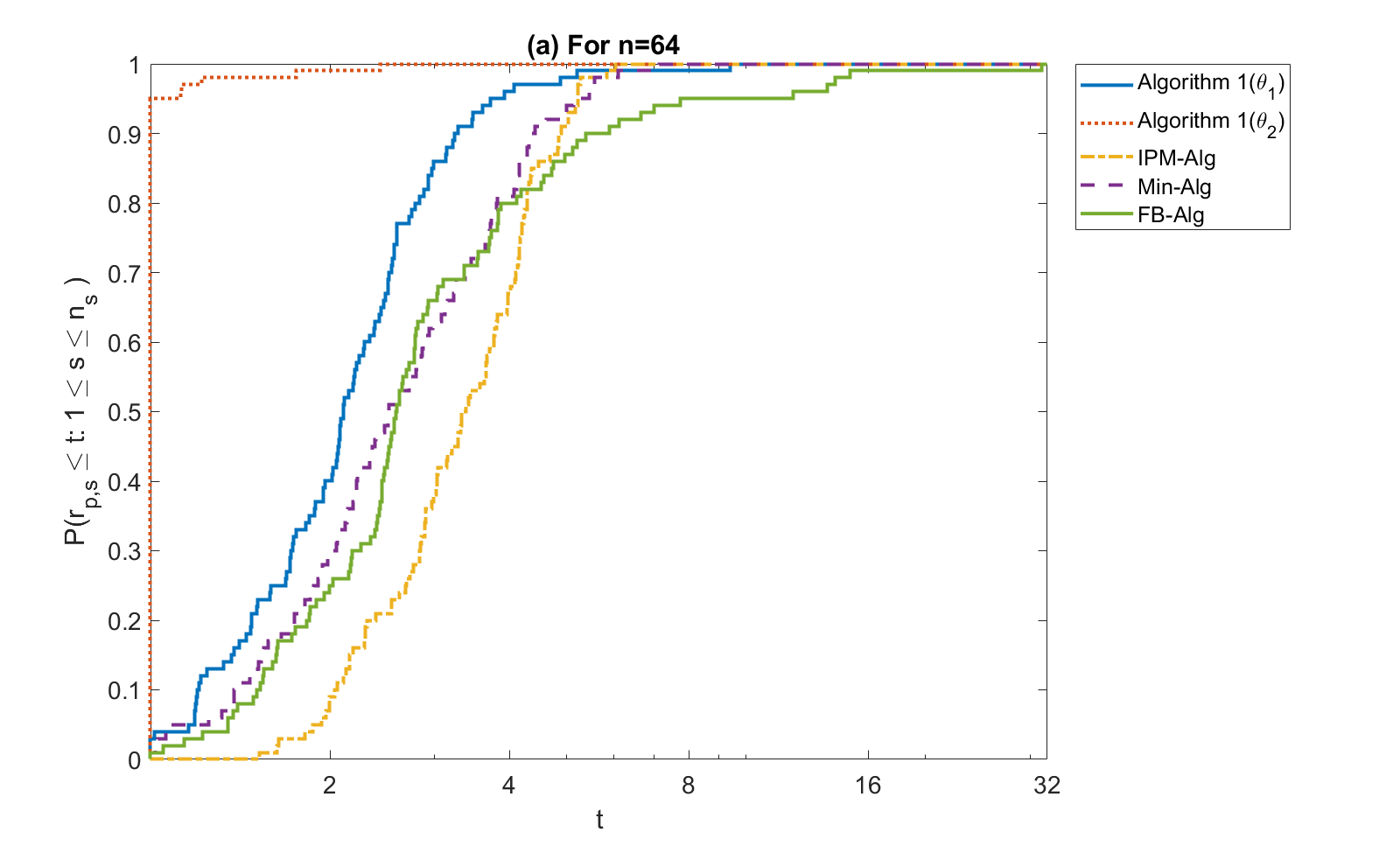}
		\end{center}
	\end{minipage} \hfill
	\begin{minipage}[H]{.46\linewidth}
		\begin{center}
			\includegraphics[width=8.5cm,height=6.5cm]{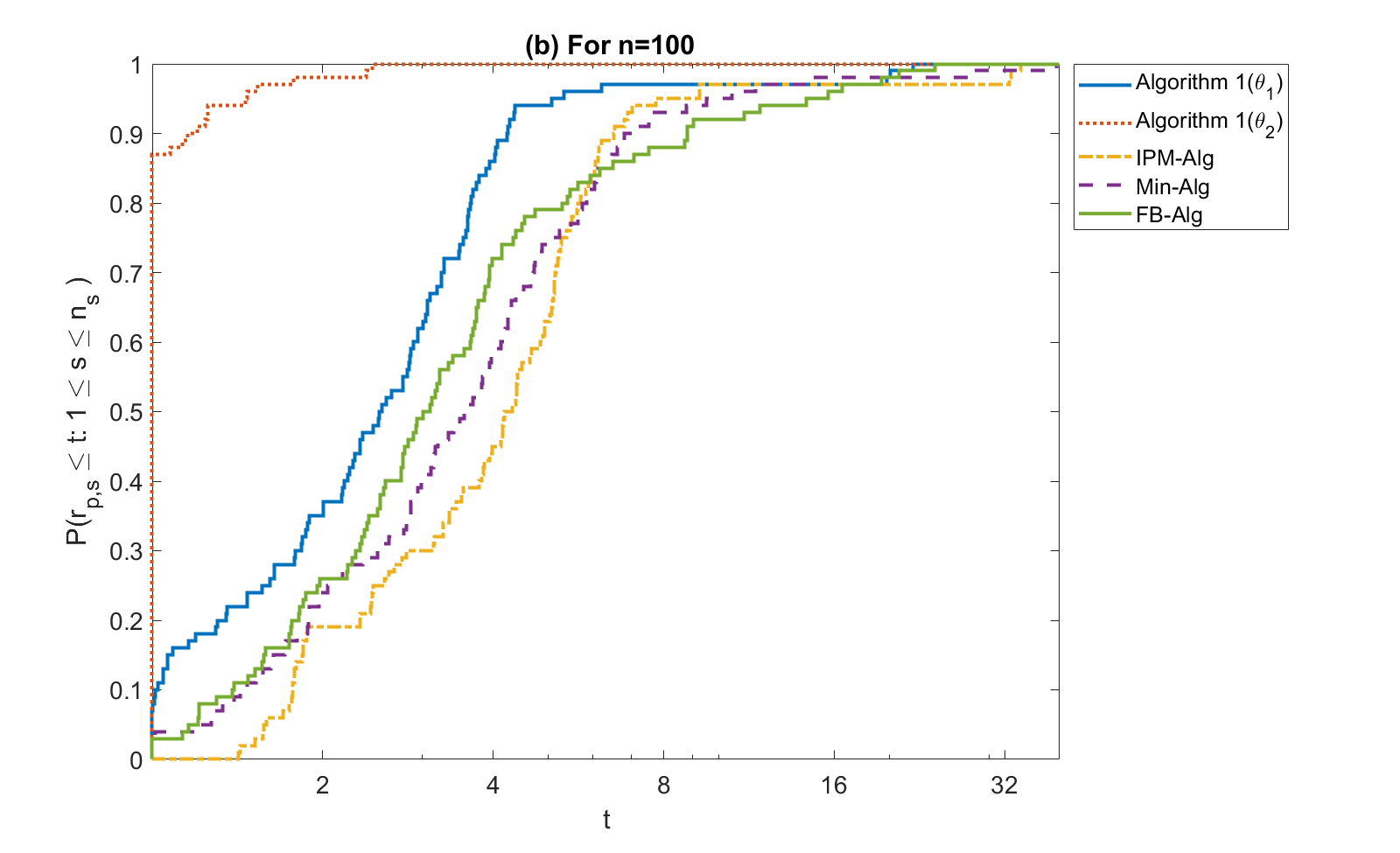}
		\end{center}
	\end{minipage}
	\caption{Performance profiles where $ t_{p,~s}$ represents the average computation time.}
\end{figure}
The figure above shows the performance profiles of five methods where the performance measure is execution time. It is clear that our method with the $ \theta_{2} $ function captures our attention (admits the highest probability value). In fact, in the interval [0, 1], our method is able to solve $ 99\% $ of the problems, while the other methods do not reach $ 20\% $  and require more time. We also notice that IPM-Alg is the slowest compared to others. However, for $ t> 4 $, the three algorithms FB-Alg, Min-Alg, and Algorithm 1 with $ \theta_{1} $ function confirm their robustness. Figure 3 also indicates that, with respect to the computation time, with the same initial points and under the same stopping criterion, our method with $ \theta_{2} $ (resp. $ \theta_{2} $ function ) is the fastest method, followed respectively by Min-Alg, FB-Algor, and IPM-Alg.
\begin{figure}[H]\label{fig1}
	\begin{minipage}[H]{.46\linewidth}
		\begin{center}
			\includegraphics[width=8.5cm,height=6.5cm]{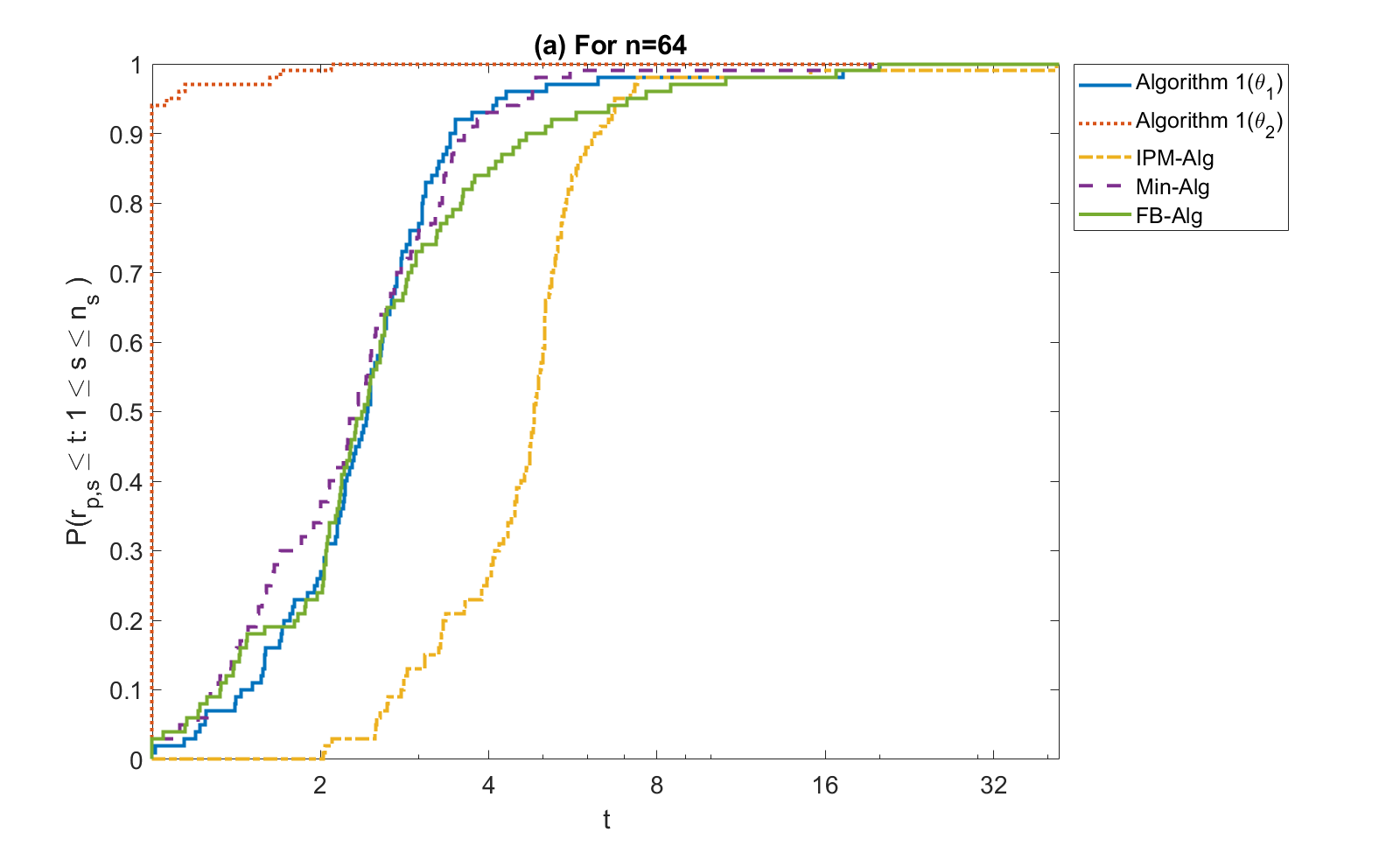}
		\end{center}
	\end{minipage} \hfill
	\begin{minipage}[H]{.46\linewidth}
		\begin{center}
			\includegraphics[width=8.5cm,height=6.5cm]{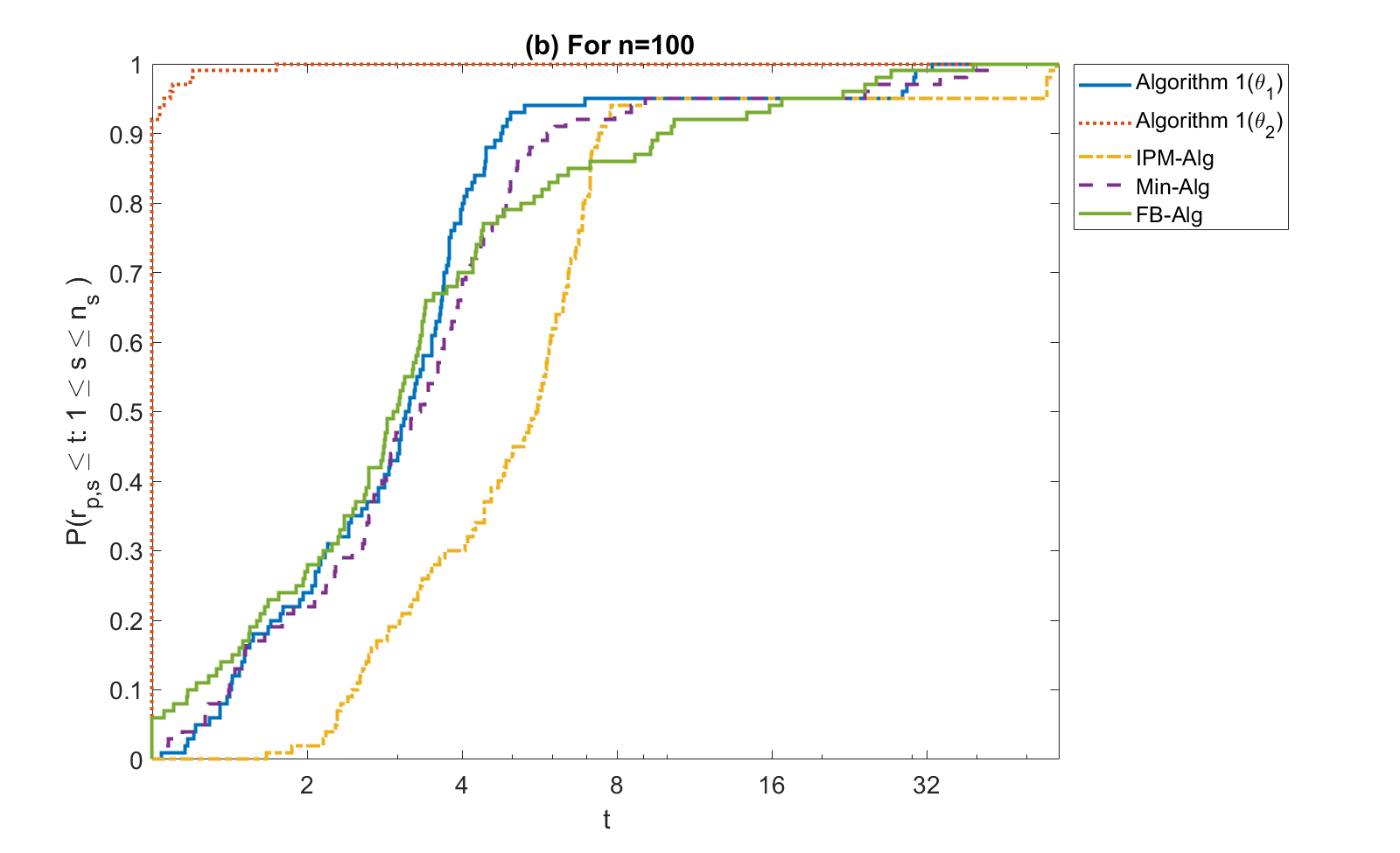}
		\end{center}
	\end{minipage}
	\caption{Performance profiles where $ t_{p,~s}$ represents the average number of iteratoins.}
\end{figure}
	
In Figure 4, we illustrate the performance profiles of five methods considering the number of iterations required as a performance measure. We notice that our method with the $ \theta_{2} $ function is the winner (admits the highest probability value) followed by our method with the $ \theta_{1} $ function, Min-Algo, and  FB-Alg. We also note that IPM-Alg needs more iterations to resolve problems. The performance of our method with the $ \theta_{1} $ function becomes interesting beyond $ t = 3 $.

\end{example}
\begin{example}
	(Geochemical Models \cite{25})
	The problem comes from Geochemistry. We introduce a model which are 2-salts. The main idea is that we need to find a way to reformulate a general problem to a problem which has a form like $ G(x)=0.$  We show the numerical results by applying several iteration methods.\\
	Let $ T, K $ are constant vectors which have meaning in chemistry. We define the problem as follows: \\
	let $ x=(x_{1}, x_{2}, x_{3})$ and $ p=(p_{1}, p_{2})$, 
	\begin{equation*}
	\begin{split}
	H: \R^{5} \to \R^{3}&\\
	(x,~p) \to H(x,p)&=\begin{pmatrix}
	T_{1}-x_{1}-p_{1} \\
	T_{2}-x_{2}-p_{2}\\
	x_{3}-x_{2}-x_{1} \\
	\end{pmatrix}
	\end{split}	
	\end{equation*}
	\begin{equation*}
	\begin{split}
	F: \R^{3} \to \R^{2}&\\
	x \to F(x)&=\begin{pmatrix}
	K_{1}-x_{1}x_{3} \\
	K_{2}-x_{2}x_{3}\\
	\end{pmatrix}
	\end{split}	
	\end{equation*}
		\begin{equation*}
	\begin{split}
	G: \R^{5} \to \R^{5}&\\
	(x,p)\to G(x,p)&=\begin{pmatrix}
H(x,p) \\
	P^{T}F(x)\\
	p \geq 0, F(x) \geq 0\\
	\end{pmatrix}.
	\end{split}	
	\end{equation*}
	We want to solve the equation 
	\begin{equation} \label{11123}
		G(x,p)=0 \iff \begin{pmatrix}
		H(x,p) \\
		P^{T}F(x)\\
		p \geq 0, F(x) \geq 0\\
		\end{pmatrix}=0.
	\end{equation}

Using our approach with $ \theta_{r}=\theta_{r}^{1}$ (resp.  $ \theta_{r}=\theta_{r}^{2}$ ), we reformulate \eqref{11123} and we get 

\begin{equation} 
G(x,p)= \begin{pmatrix}
K_{1}-x_{1}x_{3}-z_{1}\\
K_{2}-x_{2}x_{3}-z_{2}\\
T_{1}-x_{1}-x_{4} \\
T_{2}-x_{2}-x_{5}\\
x_{3}-x_{2}-x_{1}\\
G_{r}(x_{4},z_{1})\\
G_{r}(x_{5},z_{2})\\
\frac{1}{2} \Vert x^{-}\Vert^{2}+\frac{1}{2} \Vert z^{-}\Vert^{2}+ r^{2}+\varepsilon r\\
\end{pmatrix}=0.
\end{equation}
where $ G_{r}(x,z)=\dfrac{xz-r^{2}}{x+z+2r} $ with  $ \theta_{r}=\theta_{r}^{1}$    and $ G_{r}(x,z)=-r\log(e^{-x/r}+e^{-z/r}) $ with  $ \theta_{r}=\theta_{r}^{2}$, and considering $ x_{4}=p_{1},~ x_{5}=p_{2}.$\\
Since our focus is on the effect of different smoothing approaches in solving \eqref{11123}, we replace the complementarity constraint by the Min function, i.e.:
\begin{equation} 
p^{T}F(x)=0 \iff \min(p,F(x))=0,
\end{equation}
and by the Fischer-Burmeister function \cite{FB}, i.e.
\begin{equation} 
p^{T}F(x)=0 \iff \sqrt{p^{2}+F(x)^{2}}-(p+F(x))=0,
\end{equation}
respectively. The corresponding two algorithms are referred to as Min-Alg and FB-Alg, respectively. We make a comparison among Algorithm 1, Min-Alg, and FB-Alg, IPM-Alg (The classical interior-point method \cite{El Ghami}) by implementing these algorithms to solve problem \eqref{11123}.\\
The table 2, Figure 5 and  Figure 6 show the results with the intial point $ x_{0}=(3,1,4,5,6)^{T},~ T=(2,6)^{T},\\
~K=(37.5837, 7.6208)^{T}$. 
\begin{table}[H]
	\caption{Results for $ \theta_{1} $ and $ \theta_{2} $ }
	\begin{center}
		\begin{tabular}{cccccc}
			\hline
			\hline
			Iter & Algorithm 1 ($ \theta_{1} $) &   Algorithm 1 ($ \theta_{2} $) $  $&  Min-Alg& FB-Alg& IPM-Alg \\
			$ k $&$ \Vert G(x^{k}) \Vert  $ &$ \Vert G(x^{k}) \Vert  $ & $\Vert G_{Min}(x^{k}) \Vert $  &$\Vert G_{FB}(x^{k}) \Vert $&$\Vert G_{IPM}(x^{k}) \Vert $\\
			\hline
			\hline 
			 0 &   24.5837 &    24.5837 & 24.5837 & 24.5837  & 24.5837 \\
			 1&    8.2104  &  13.2212 &   13.3538 &   8.9798  &     8.3720 \\
			2&  2.5518 & 12.0213  & 12.1412&  4.2253   &  6.2486 \\
			3& 4.1723& 9.6610 &  11.0112 &  1.2774 &1.8581 \\
			4 &  1.9497 & 6.1377  & 9.9651 &  0.4355   & 1.2035  \\
		5	&    0.0382  &  1.3388 &  9.0020 & 0.2001  & 0.8359 \\
		6	&  0.0063 &  0.3347  &   8.1192& 0.1049  &  0.2406 \\
		7	&  0.0012 & 0.0838  &  7.1321& 0.0537   &  0.0032 \\
		8	&  2.4936e-04 & 0.0210  &  6.0534&  0.0272   &  5.1935e-05 \\
		9	&  5.0533e-05 & 0.0052 &  4.6609&  0.0137   &  1.6166e-05 \\
	10	&  1.0487e-05 & 0.0013  &   2.9998&  0.0068   &  4.9426e-06 \\
		11	& 2.1474e-06  & 3.2750e-04  &  1.2778& 0.0034  &  1.5108e-06 \\
		12	&  4.4258e-07  & 8.1875e-05  &  0.0563&  0.0017  &  4.6172e-07\\
		13	& 9.1435e-08  & 2.0469e-05  &  1.2410e-05& 8.5782e-04   &  1.4110e-07 \\
		14	& 1.8834e-08  & 5.1172e-06  &  4.4658e-12& 4.2899e-04   &  4.3119e-08 \\
			15	& 4.5031e-09  & 1.2793e-06  &  & 2.1451e-04  &  1.3176e-08 \\
			16	&  9.6186e-10  & 3.1982e-07  &  & 1.0726e-04 &   4.0264e-09 \\
			17	&   & 7.9956e-08  &  & 5.3631e-05   &  1.2304e-09 \\
			18	&   & 1.9989e-08  &  & 2.6816e-05   &   3.7597e-10 \\
			19	&   & 4.9973e-09 &  & 1.3408e-05   &   \\
			20	&   &  1.2493e-09 &  & 6.7041e-06   &   \\
			21	&   &3.1233e-10  &  & 3.3520e-06  &   \\
			22	&   &   &  &  1.6760e-06   &   \\
			23	&   &   &  &  8.3801e-07   &   \\
		  24	&   &   &  &  4.1900e-07   &  \\
		25	&   &   &  &  2.0950e-07   &   \\
		$ \vdots $	&   &   &  &  $ \vdots $  &   \\
	 34	&   &   &  &    4.0918e-10   &   \\
			\hline
			\hline 
		\end{tabular}
		\label{tab5}
	\end{center}
\end{table}

 \begin{figure}[H]\label{fig1}
	\begin{minipage}[H]{.46\linewidth}
		\begin{center}
			\includegraphics[width=8.2cm,height=6.2cm]{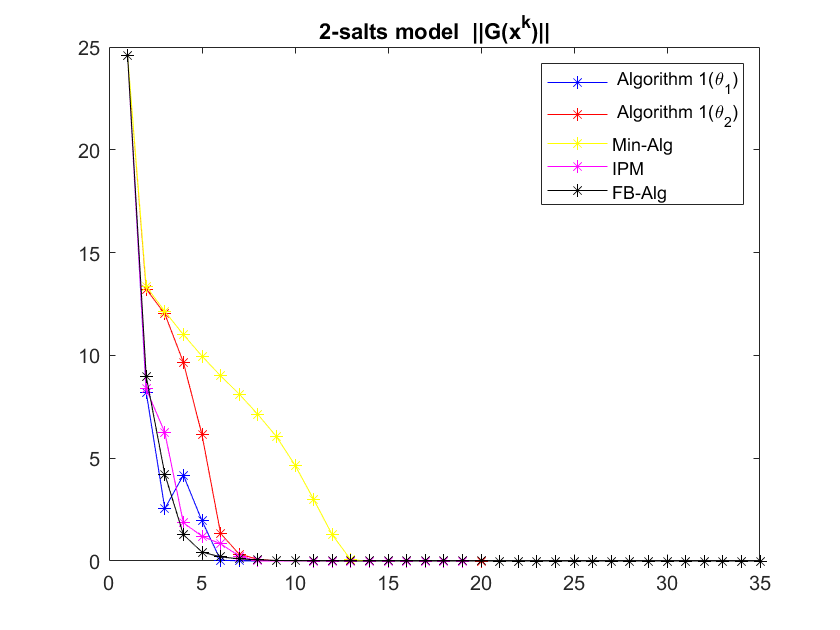}
		\end{center}
	\end{minipage} \hfill
	\begin{minipage}[H]{.46\linewidth}
		\begin{center}
			\includegraphics[width=8.2cm,height=6.2cm]{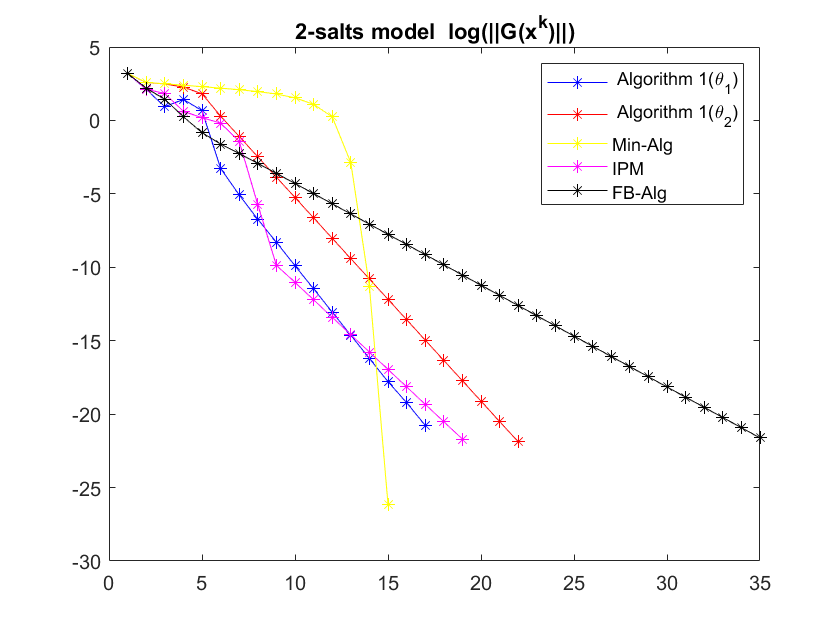}
		\end{center}
	\end{minipage}
	\caption{2-salts model of  $ \|G(x^{k})\|_{\infty} $ and $ log(\|G(x^{k}\|_{\infty}) $ .}
\end{figure}
In Figure 5 and Figure 6, all methods are quadratic convergence. The best method based on the number of iterations is semi-smooth Newton min method ($ 14 $ iters). Next is our method for $ \theta_{1}$ function with ($ 16 $ iters). Next is the classical interior-point method and our method for $ \theta_{2}$ function with ($ 18 $ iters) and  ($ 21 $ iters), respectively. The last  is the Fischer-Burmeister method with ($ 34 $ iters).
\end{example}

 \begin{example} (An ordinary differential equation) We consider the ordinary differential equation 	
\begin{equation}\label{2233}
\left\{\begin{array}{ccc}
&x^{''}(t)-|x(t)|=-2-t, \\
&x(0)=-4, ~~~~x^{'}(0)=5, \\
&t\in [0,~5].
\end{array} \right.
\end{equation}
First, we discretize the EDO equation by using the finite difference scheme. We use the second-order centred finite difference to approximate the second order derivative
\begin{equation}\label{3333}
\dfrac{x_{i-2}-2x_{i-1}+x_{i}}{h^{2}}-|x_{i}|=(-2-t)_{i}.
\end{equation}
Equation $\eqref{3333}$ was derived with equispace gridpoints $ t_{i}=ih, ~i=1, ...N.$ In order to approximate the Neumann boundary conditions we use a center difference 
\begin{equation}\label{333}
\dfrac{x_{1}-x_{-1}}{2h}=x^{'}(0)=1.
\end{equation}
Using the classical decomposition of the absolute value \cite{Lina Abdallah} we reformulate $\eqref{3333}$ as follows 
\begin{equation}\label{33344}
\left\{
\begin{array}{llllll} 
N_{1}x^{+}-N_{2}x^{-}=q,
~~~~~~~~~~~~~~~~~~~~~~~~~~~~~~~~~~~~~~~~~~~~  \\
0 \leq x^{+} \perp x^{-} \geq 0,
\end{array}
\right.
\end{equation}

where 
\begin{equation*} 
N_{1}=\frac{1}{h^2}\left(\begin{array}{ccccc}
2-h^{2}&         &        &   &     \\
-2  &1-h^{2}  &       &   &    \\
1      & \ddots  & \ddots &   &    \\
&\ddots   &  \ddots      & \ddots  &   \\
&   &1       & -2  &  1-h^{2}\\
\end{array}\right), ~~ N_{2}=\frac{1}{h^2}\left(\begin{array}{ccccc}
2+h^{2}&         &        &   &     \\
-2  &1+h^{2}  &       &   &    \\
1      & \ddots  & \ddots &   &    \\
&\ddots   &  \ddots      & \ddots  &   \\
&   &1       & -2  &  1+h^{2}\\
\end{array}\right),	
\end{equation*}
and 
$  
q=-\frac{1}{h^2}\left(\begin{array}{c}
8-10h\\
-4     \\
\vdots        \\
0\\
\end{array}\right)
-\left(\begin{array}{c}
2+h\\
2+2h \\
\vdots    \\
2+Nh\\
\end{array}\right)
$. \\

\vspace{0.2cm}
$ N_{1} $ is invertible, then the problem $\eqref{33344}$  is reduced to a standard LCP.\\
We compare the obtained solution by our methods to the predefined Runge-Kutta ode45 function in MATLAB \cite{Matlab}. The domain is $ t \in [0, 5] $, initial conditions $ x({0})=-4, x^{'}(0)=5$ and $ N=100.$
\begin{figure}[H]
	\begin{minipage}[H]{.46\linewidth}
		\begin{center}
			\includegraphics[width=8.3cm,height=6.3cm]{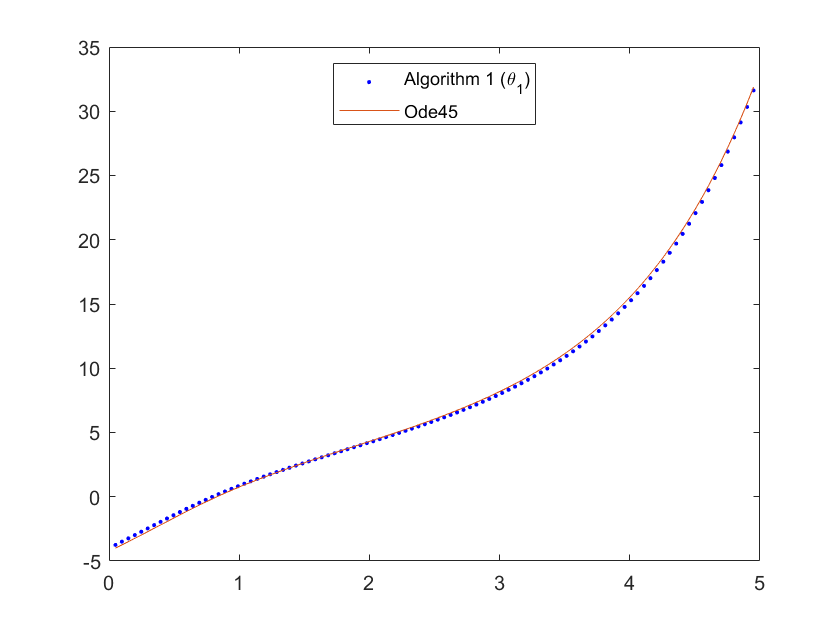}
		\end{center}
	\end{minipage} \hfill
	\begin{minipage}[H]{.46\linewidth}
		\begin{center}
			\includegraphics[width=8.3cm,height=6.3cm]{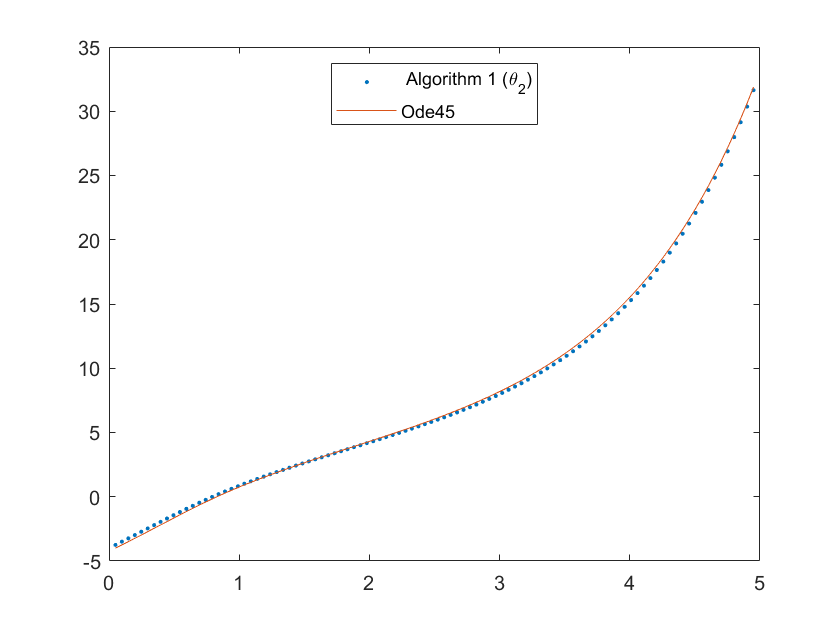}
		\end{center}
	\end{minipage}
	\caption{Numerical solution of  $\eqref{2233}$ with ode45 and both methods}
\end{figure}
Both methods solve the problem and gives consistent results.
 \end{example}

\section{Conclusion}
In this paper, we have presented a new smoothing approach for solving the nonlinear complementarity problem. For such an approach, some useful properties have been analyzed, which was employed to develop a well-defined and efficient Jacobian Newton algorithm for solving the nonlinear complementarity problem with $ \mathcal{P}_{0} $ function. We have established the global convergence and the super-linear convergence for the developed algorithm. Numerical experiments prove the efficiency of our study in the following aspects:
\vspace{0.2cm}
\begin{enumerate}
	\item It can find the solution of NCP either with less number of iteration, or with higher precision than the other.
	\item It is relatively more robust for the increasing dimension of the test problem. In particular, it seems more suitable to solve large-scale problems.
	\item It is more efficient to find the nondegenerate solution of NCP with less iteration number than the others.	
\end{enumerate}

\section{Appendix}

We give in this appendix a brief description of each test example.
\begin{enumerate}
	\item The two first examples $ \mathbf{P1} $ and $ \mathbf{P2} $ \cite{18} correspond to strongly monotone function 
	\begin{equation*}
	F(x)=(F_{1}(x),~...,~F_{n}(x))^{T} \quad \text{with} \quad F_{i}(x)=-x_{i+1}+2x_{i}-x_{i-1}+\frac{1}{3}x_{i}^{3}-b_{i}, \quad i=1,~...,~n
	\end{equation*}
	where $ x_{0}=x_{n+1}=0$ and $ b_{i}=(-1)^{i} ~(\text{resp.}~ b_{i}=\frac{(-1)^{i}}{\sqrt{i}}), \quad i=1,~...,~n,~$~ for $ \mathbf{P1} $ (resp. $ \mathbf{P2}).$
	\item $ \mathbf{P3} $ is another strongly monotone test problem from \cite{19} where $F(x)=(F_{1}(x),~...,~F_{n}(x))^{T} $ with

	\begin{equation*}
		F_{i}(x)=-x_{i+1}+2x_{i}-x_{i-1}+\arctan(x_{i})+\left(i-\frac{\pi}{2}\right), \quad i=1,~...,~n, ~~(x_{0}=x_{n+1}=0).
	\end{equation*}
	\item $ \mathbf{P4} $ and $ \mathbf{P5} $ are known as the degenerate and non-degenerate examples of Kojima-Shindo \cite{20}. $ \mathbf{P4} $ and $ \mathbf{P5} $ are respectively defined by
	
		\begin{equation*}
	F_{4}(x)=\left(\begin{array}{ccccc}
	3x_{1}^{2}+2x_{1}x_{2}+2x_{2}^{2}+x_{3}+3x_{4}-6\\
	2x_{1}^{2}+x_{1}+x_{2}^{2}+10x_{3}+2x_{4}-2\\
	3x_{1}^{2}+x_{1}x_{2}+2x_{2}^{2}+2x_{3}+9x_{4}-9\\
	x_{1}^{2}+3x_{2}^{2}+2x_{3}+3x_{4}-3\\
	\end{array}\right), \quad  \quad  
	F_{5}(x)=\left(\begin{array}{ccccc}
3x_{1}^{2}+2x_{1}x_{2}+2x_{2}^{2}+x_{3}+3x_{4}-6\\
2x_{1}^{2}+x_{1}+x_{2}^{2}+10x_{3}+2x_{4}-2\\
3x_{1}^{2}+x_{1}x_{2}+2x_{2}^{2}+2x_{3}+3x_{4}-1\\
x_{1}^{2}+3x_{2}^{2}+2x_{3}+3x_{4}-3\\
	\end{array}\right).
	\end{equation*} 
	$ \mathbf{P5} $ has a unique solution $ x^{*}=\left(\frac{\sqrt{6}}{2},~0,~0,~\frac{1}{2}\right) $ with $ F(x^{*})=\left(0,~2+\frac{\sqrt{6}}{2},~3,~0\right) $ while $ \mathbf{P4} $
 has two optimal solutions $ x^{*}=\left(\frac{\sqrt{6}}{2},~0,~0,~\frac{1}{2}\right) $  with $ F(x^{*})=\left(0,~2+\frac{\sqrt{6}}{2},~0,~0\right)$ and $ x^{**}=\left(1,~0,~3,~0\right) $ with \\ $ F(x^{*})=\left(0,~31,~0,~4\right)$. The first optimal solution of $ \mathbf{P4} $ is degenerate since $ x_{3}^{*}=F_{3}(x^{*})=0. $
\item \cite{21} In problem \eqref{1}, $ x \in \R^{7} $ and $ F(x): \R^{7}  \to \R^{7} $ is given by 

\begin{equation*}
F_{6}(x)=\left(\begin{array}{ccccc}
2x_{1}-x_{3}+x_{5}+3x_{6}-1\\
x_{2}+2x_{5}+x_{6}-x_{7}-3\\
-x_{1}+2x_{3}+x_{4}+x_{5}+2x_{6}-4x_{7}+1\\
x_{3}+x_{4}+x_{5}-x_{6}-1\\
-x_{1}-2x_{2}-x_{3}-x_{4}+5\\
-3x_{1}-x_{2}-2x_{3}+x_{4}+4\\
x_{2}+4x_{3}-1.5\\
\end{array}\right).
\end{equation*}

$ \mathbf{P6}$, has a non-degenerate solution
\begin{equation*}
x^{*}=(0.2727,~2.0909,~0,~0.54545,~0.4545,~0,~0)^{T}.
\end{equation*}

\item A complete description of $\mathbf{P7}$ and $\mathbf{P8}$ can be found in \cite{22, 23}. These two examples correspond to the Nash-Cournot test problem with $ N=5 $ and $ N=10.$ \\
Let $ x \in \R^{N}, ~Q= \sum x_{i}$ and define the functions $ C_{i}(x_{i}) $ and $p(Q)$ as follows:
\begin{equation*}
	P(Q)=5000^{\frac{1}{\gamma}}Q^{\frac{-1}{\gamma}}, \quad C_{i}(x_{i})=c_{i}x_{i}+\dfrac{b_{i}}{1+b_{i}}L_{i}^{\frac{1}{b_{i}}}x_{i}^{\frac{b_{i}+1}{b_{i}}}
\end{equation*}
The NCP-function is given by
\begin{equation*}
F_{i}(x)=C_{i}'(x_{i})-P(Q)-x_{i}p'(Q), \quad i=1, ..., N,
\end{equation*}
with $ c_{i}, L_{i}, b_{i} >0 $ and $ \gamma \geq 1.$ For our numerics, we used:
\begin{itemize}
\item  $ N=5, ~c=[10, 8, 6, 4, 2]^{T},~b=[1.2, 1.1, 1, 0.9, 0.8]^{T},~L=[5, 5, 5, 5, 5]^{T}, e=[1, 1, 1, 1, 1]^{T}$ and $ \gamma=1.1. $
\item  $ N=10, ~c=[5, 3, 8, 5, 1, 3, 7, 4, 6,3]^{T},~b=[1.2, 1, 0.9, 0.6, 1.5, 1, 0.7, 1.1, 0.95, 0.75]^{T},\\~L=[10, 10, 10, 10, 10, 10, 10, 10, 10, 10]^{T}, e=[1, 1, 1, 1, 1, 1, 1, 1, 1, 1]^{T}$ and $ \gamma=1.2. $
\end{itemize}

\end{enumerate}
Now, we report some numerics obtained by using the following projection method, (see \cite{24}).
\begin{equation*}
	x^{k+1}=\max(0,x^{k}-D^{-1}F(x^{k})), \quad k=0,~1,...
\end{equation*}
We choose $ D= \lambda I,$ where $ \lambda>0 $ is a constant and $ I $ is the $ n \times n $ identity matrix. Table 3 presents the best obtained results when varying the value of $ \lambda=(0.1,~1,~10,~20,~50,~100).$

\begin{table}[H]
	\caption{Results for the projection method }
	\begin{center}
		\begin{tabular}{ccccc}
			\hline
			\hline
			Pb & size &   Iter $  $& cpu time(s)& Opt.  \\

			\hline
			\hline 
			P1  &  10 & 71  &0.63& 2.5e-09  \\
			\hline
			&    100  & 72& 5.48 &  9.1e-11    \\
			\hline
			&  500 & 89  & 96.37&  4.7e-10   \\
			\hline
			& 1000& 83 &  224.04 & 8.8e-11   \\
			\hline
			\hline
			P2  &  10 & 72  & 1.19 & 2.2e-10     \\
			\hline
			&    100  & 80 &  5.76 & 7.1e-12   \\
			\hline
			&  500 & 91  &  112.41& 5.3e-12    \\
			\hline
			& 1000& 102 &  336.20 & 2.9e-11   \\
			\hline
			\hline
			P3  &  10 & 41 & 1.03 & 6.4e-11    \\
			\hline
			&    100  & 73 &  5.19 &  1.8e-12 \\
			\hline
			&  500 & 82  &  90.22& 5.8e-13  \\
			\hline
			& 1000& 84 &  350.06 &  2.4e-11\\
			\hline
			\hline
			P4  &  4 & 66  & 0.19& 3.1e-12    \\
			\hline
			\hline
			P5&   4  & 163 &  0.34 &  1.4e-12  \\
			\hline
			\hline
			P7&   5  & 52 &  0.22 &  6.5e-11  \\
			\hline
			\hline
			P8&   10  & 105 & 0.46 &  2.7e-12    \\
			\hline
			\hline 
		\end{tabular}
		\label{tab5}
	\end{center}
\end{table}

\subsection{Exact solution of 2-salts model}
We compute the exact solution for the problem \eqref{04081993} in case where the NCP-function is the min-function. We want to compute exact solution of 
\begin{equation}  \label{04081993}
G(x,p)= \begin{pmatrix}
T_{1}-x_{1}-p_{1} \\
T_{2}-x_{2}-P_{2}\\
x_{3}-x_{2}-x_{1}\\
\min(p_{1},~K_{1}-x_{1}x_{3})\\
\min(p_{2},~K_{2}-x_{2}x_{3})\\
\end{pmatrix}=0.
\end{equation}
When having in hand the exact solution of \eqref{04081993}, we choose the initial point in the code and also prove the existence and uniqueness of the solution of \eqref{04081993}. Here we have some conditions as $ p_{i} \geq 0,~K_{1}-x_{1}x_{3} \geq 0,~K_{2}-x_{2}x_{3}\geq 0. $ Therefore we have four cases.
\begin{enumerate}
\item If $ p_{1}>0~p_{2}>0.$ \\
In this case, since $ K_{1},~K_{2}>0 $ then from \eqref{04081993} we have $ x_{1},~ x_{2},~ x_{3}>0 $ and $ T_{1}-x_{1}=p_{1}>0 \Rightarrow  \\ 0<x_{1}<T_{1},~T_{2}-x_{2}=p_{2}>0 \Rightarrow 0<x_{2}<T_{2}.$\\
 \begin{equation*}
\left\{\begin{array}{ccc}
K_{1} &=&x_{1}x_{3} \\
K_{2} &=&x_{2}+x_{3}\\
 x_{3} &= &x_{1}+x_{2}\\
\end{array} \right. \Leftrightarrow \left\{\begin{array}{ccc}
K_{1} &=&x_{1}(x_{1}+x_{2}) \\
K_{2} &=&x_{2}(x_{1}+x_{2})\\
x_{3} &= &x_{1}+x_{2}\\
\end{array} \right. \Leftrightarrow\left\{\begin{array}{ccc}
x_{1} &=&\dfrac{K_{1}}{\sqrt{K_{1}+K_{2}}} \\
x_{2} &=&\dfrac{K_{2}}{\sqrt{K_{1}+K_{2}}}\\
x_{3} &= &\sqrt{K_{1}+K_{2}}\\
\end{array} \right.
\end{equation*}
Then $ p_{1}=T_{1}-x_{1}=T_{1}-\dfrac{K_{1}}{\sqrt{K_{1}+K_{2}}} $ and we need $ T_{1}>\dfrac{K_{1}}{\sqrt{K_{1}+K_{2}}} $ since $ p_{1}>0.$\\ The same for $ p_{2}=T_{2}-x_{2}=T_{2}-\dfrac{K_{2}}{\sqrt{K_{1}+K_{2}}} $ and the conditon $ T_{2}>\dfrac{K_{2}}{\sqrt{K_{1}+K_{2}}}$. We get the exact solution of \eqref{04081993} in this case
\begin{equation}
	(x,p)=\left(\dfrac{K_{1}}{\sqrt{K_{1}+K_{2}}},~\dfrac{K_{2}}{\sqrt{K_{1}+K_{2}}},~\sqrt{K_{1}+K_{2}},~ T_{1}-\dfrac{K_{1}}{\sqrt{K_{1}+K_{2}}},~T_{2}-\dfrac{K_{2}}{\sqrt{K_{1}+K_{2}}}\right)^{T},
\end{equation}
where $ T_{1}>\dfrac{K_{1}}{\sqrt{K_{1}+K_{2}}} $ and $ T_{2}>\dfrac{K_{2}}{\sqrt{K_{1}+K_{2}}}.$

\item If $ p_{1}=0~p_{2}>0.$ \\
Since $ p_{1}=0 $ then $ x_{1}=T_{1},~K_{1} \geq x_{1}x_{3}$ and $ x_{3}=x_{2}+x_{1} =x_{2}+T_{1}.$ Since $ p_{2}>0 $ then \\ $ K_{2}=x_{2}x_{3}=x_{2}(x_{2}+T_{1}) \Rightarrow T_{1}=\dfrac{K_{2}-x_{2}^{2}}{x_{2}} $ and we get a equation 
\begin{equation}\label{833399999333}
x_{2}=\frac{-T_{1}+\sqrt{T_{1}^{2}+4K_{2}}}{2}.
\end{equation}
Since $ T_{1} ,~T_{2} \geq 0,~K_{2}>0,~$ then $ x_{2}>0 $ and $ x_{3}=T_{2}+x_{2}>0.$  We also need $ T_{2}-x_{2}>0 $ or $ T_{2}>x_{2}.$ Then we have $ T_{2}^{2}+T_{1}T_{2}-K_{2}>0. $ The last condition to check is that $ K_{1} \geq x_{1}x_{3}=x_{1}(x_{1}+x_{2})= \\
T_{1}(T_{1}+x_{2})=\dfrac{K_{2}-x_{2}^{2}}{x_{2}}(\dfrac{K_{2}-x_{2}^{2}}{x_{2}}+x_{2}).$ Then $ x_{2} \geq \dfrac{K_{2}}{\sqrt{K_{1}+K_{2}}}$. This implies $ T_{2}>x_{2}\geq \dfrac{K_{2}}{\sqrt{K_{1}+K_{2}}}$. Then we get the solution $ x=[ T_{1},~x_{2},~T_{1}+x_{2},~0, T_{2}-x_{2} ]^{T} $ where $ x_{2} $ is in \eqref{833399999333}, $ T_{2}^{2}+T_{1}T_{2}-K_{2} >0$ \\and $ T_{2}>\dfrac{K_{2}}{\sqrt{K_{1}+K_{2}}} $.
\item If $ p_{1}>0~p_{2}=0.$ \\
Since $ p_{2}=0 $ then $ x_{2}=T_{2},~K_{2}\geq x_{2}x_{3} $ and $ x_{3}=x_{2}+x_{1}=T_{2}+x_{1}.$ Since $ p_{1}>0 $ then $ K_{1}=x_{1}x_{3}=\\x_{1}(x_{1}+T_{2}) \Rightarrow T_{2}=\dfrac{K_{1}-x_{1}^{2}}{x_{1}} $ and we get a equation
\begin{equation*}
x_{1}^{2}+T_{2}x_{1}-K_{1}=0 \Leftrightarrow x_{1}=\dfrac{-T_{2} \pm \sqrt{T_{2}^{2}+4K_{1}}}{2}.
\end{equation*}
Since we want the solution $ x $ to be nonnegative, we choose 
\begin{equation} \label{000001111}
	x_{1}=\dfrac{-T_{2} + \sqrt{T_{2}^{2}+4K_{1}}}{2}>0.
\end{equation}
If $ T_{1},~T_{2},~K_{1}>0 $ is then $ x_{1}>0 $ and $ x_{3}=T_{2}+x_{1}>0.$ We also need $ T_{1}-x_{1}>0$ or \\ $ T_{1}-\dfrac{-T_{2}+\sqrt{T_{2}^{2}+4K_{1}}}{2}>0 \Leftrightarrow T_{1}^{2}+T_{1}T_{2}-K_{1}>0$ to ensure that $ p_{1}=T_{1}-x_{1} $ is nonnegative. The last one is to check $ K_{2} \geq x_{2}x_{3}=T_{2}(T_{2}+x_{1})=\dfrac{K_{1}-x_{1}^{2}}{x_{1}}(\dfrac{K_{1}-x_{1}^{2}}{x_{1}}+x_{1})  \Leftrightarrow x_{1} \geq \dfrac{K_{1}}{\sqrt{K_{1}+K_{2}}}$ then we get \\ $ T_{1}>\dfrac{K_{1}}{\sqrt{K_{1}+K_{2}}} $. Then we get the solution $ x=[x_{1}, T_{2}, T_{2}+x_{1}, T_{2}-x_{1},0]^{T} $ where $ x_{1} $ is in \eqref{000001111} and \\ $ T_{1}^{2}+T_{1}T_{2}-K_{1}>0 $ and  $ T_{1}>\dfrac{K_{1}}{\sqrt{K_{1}+K_{2}}}$.
\item If $ p_{1}=0~p_{2}=0.$ \\
We get 
\begin{equation*}
\left\{\begin{array}{ccc}
x_{1} &=&T_{1} \\
x_{2} &=&T_{2}\\
x_{3} &= &x_{1}+x_{2}=T_{1}+T_{2}.\\
\end{array} \right.
\end{equation*}
We need some conditions that $ K_{1}-x_{1}x_{3} \geq p_{1}=0  \Rightarrow K_{1} \geq T_{1}(T_{1}+T_{2})$ and the same for \\ $ K_{2} \geq T_{2}(T_{1}+T_{2}) \geq 0.$ If $ T_{1},~T_{2} $ is nonnegative then the exact solution of \eqref{04081993} is 
\begin{equation*}
x=[T_{1},~T_{2},~T_{1}+T_{2},~0,~0]^{T},
\end{equation*}
where $ K_{1} \geq T_{1}(T_{1}+T_{2}) \geq 0 $ and $ K_{2}\geq T_{2}(T_{1}+T_{2}) \geq 0$.

\end{enumerate}

\vskip 6mm

\end{document}